\newcommand{\usecolor}[2]{{\color{#1} #2}}
\newcommand{\blue}[1]{\usecolor{blue}{#1}}
\newtheorem{defi}{Definition}
\newtheorem{lem}{Lemma}
\newtheorem{pro}{Proposition}
\newtheorem{teo}{Theorem}
\newtheorem{cor}{Corollary}
\newtheorem{rem}{Remark}
\newcommand{\alg}[1]{{\ensuremath{\boldsymbol{ #1}}}}
\newcommand{\conj}{\ensuremath{\mathbin{\&}}}
\def\pieceend{\hfill$\dashv$\endtrivlist}
\newenvironment{proofclaim}{\trivlist\item[\hskip
       \labelsep{\it Proof of the claim:\/}]\ignorespaces }{\hfill\pieceend \endtrivlist}
\newenvironment{proof2}{\trivlist\item[\hskip
       \labelsep{\it Proof \/}]\ignorespaces }{\hfill$\Box$ \endtrivlist}
\begin{document}

\title{Expanding FL$_{ew}$ with a Boolean connective}

\author{Rodolfo C. Ertola-Biraben \and Francesc Esteva \and Llu\'is Godo}

\date{}
\institute{
Rodolfo C. Ertola-Biraben \at State University of Campinas, Rua S\'ergio Buarque de Holanda 251, 13083-859 Campinas, SP, Brazil \\  
\email{rcertola@cle.unicamp.br}
\and
Francesc Esteva, Llu\'is Godo \at  Artificial Intelligence Research Institute - CSIC, Campus de la UAB s/n, 08193 Bellaterra, Catalonia, Spain \\ 
\email{\{esteva, godo\}@iiia.csic.es}
}

\maketitle

\thispagestyle{empty}

\begin{abstract}
We expand FL$_{ew}$ with a unary connective whose algebraic counterpart is the operation that gives 
the greatest complemented element below a given argument. 
We prove that the expanded logic is conservative and has the Finite Model Property. 
We also prove that the corresponding expansion of the class of residuated lattices is an equational class. 
\end{abstract}

\section*{ACKNOWLEDGEMENT}

This paper was published in Soft Computing (Springer-Verlag) on 23 July 2016. 
The final publication is available at http://dx.doi.org/10.1007/s00500-016-2275-y

\section{Introduction}

In this paper we study the expansion of the substructural logic FL$_{ew}$, 
i.e.\ Full Lambek calculus with exchange and weakening, with a unary connective $B$  
whose intended algebraic semantics is as follows: given a bounded integral commutative residuated lattice 
(or residuated lattice for short) {\bf{A}}, 
$Ba$ is the maximum, if it exists, of the Boolean elements of the universe $A$ below $a$, 
which we call \emph{the greatest Boolean below a}, that is, 
$$Ba = \max\{b \in A: b \leq a \mbox{ and } b \mbox{ is Boolean}\}.$$
In fact, this operator is similar to the so-called Baaz-Monteiro $\Delta$ operator, 
very often used in the context of mathematical fuzzy logic systems that are semilinear expansions of MTL. 
Baaz \cite{Ba} studied it in connection with G\"odel logic while H\'ajek \cite{Ha} investigated $\Delta$ in BL logics in general, 
see also \cite[Chapter 2]{Handbook} for a more general perspective. 
Indeed, in such a context of semilinear logics, 
i.e. logics that are complete with respect to a class of linearly ordered algebras, 
the semantics of $\Delta$ is exactly the above one for $B$: in a linearly-ordered MTL-algebra, 
$\Delta a = 1$ if $a = 1$, and $\Delta a = 0$ otherwise, since the only Boolean elements in a chain are 1 and 0; 
moreover, from a logical point of view, $\Delta \varphi$ represents the weakest 
Boolean proposition implying $\varphi$. 

The operator $B$ can be also related to the join-complement operation $D$, also known as dual intuitionistic negation, 
already considered by Skolem \cite{Sk1} in the context of lattices with relative meet-complement, 
and later independently studied by e.g. Moisil \cite{Moi1} and Rauszer \cite{Rau} as well, 
the latter in the context of expansions of Heyting algebras. 
It turns out that the operation $\neg D$ and its iterations, where $\neg$ is the residual negation, has also very similar properties to $B$, 
and in some classes of residuated lattices they even coincide. 

In this paper we study the operator $B$ in the context of FL$_{ew}$ and axiomatize it.  
We show that the usual axiomatics of the $\Delta$ operator is actually too strong to capture the above intended semantics. 
In fact, the axiom $$\Delta(\varphi \lor \psi) \to (\Delta \varphi \lor \Delta \psi)$$ is not sound for $B$ over FL$_{ew}$ any longer.  
Thus, $B$ is a weaker operator than $\Delta$. 
However, as we will see, $B$ keeps most of the properties of $\Delta$. 
In particular, the expansion of FL$_{ew}$ with $B$ is conservative, 
its corresponding class of algebras is an equational class, 
and has the same kind of deduction theorem as $\Delta$. 
Also, $B$ may also be interesting as $\neg B$ has a paraconsistent behaviour. 
On the negative side, the expansion of a semilinear extension of FL$_{ew}$ with $B$  needs not to remain semilinear. 

The paper is structured as follows. 
In Section 2 we overview well-known facts about residuated lattices and its Boolean elements, as well as basic facts about the logic FL$_{ew}$. 
Sections 3 and 4 contain an algebraic study of the operator $B$. 
In particular, in Section 3 we study basic properties and show, among other things, 
that the class $\mathbb{RL}^B$ of residuated lattices expanded with $B$ is an equational class and state the modalities, while 
in Section 4 we compare $B$ with the mentioned $\Delta$ and with an operation using the join-complement $D$.
Finally, in Section 5 we focus on logical aspects, introducing the logic FL$_{ew}^B$, i.e. the expansion of  FL$_{ew}$ with the operator $B$, 
and show that is a conservative expansion and has the Finite Model Property, and hence it is decidable.
We conclude with some remarks and open problems. 

We give appropriate references. 
However, the paper is self-contained.

\section{Preliminaries}

\subsection{Residuated lattices and Boolean elements}

In this section we recall some properties of residuated lattices as well as of their Boolean elements that we will use in the following sections. 

Following \cite{GJKO},  a bounded, integral, commutative residuated lattice, or residuated lattice for short, 
is an algebra {\bf{A}}$=(A; \wedge, \vee, \cdot, \to, 0, 1)$ of type $(2, 2, 2, 2, 0, 0)$ such that: 

\begin{itemize}
\item [-] $(A;\wedge, \vee, 0, 1)$ is a bounded lattice with $0 \leq a \leq 1$, for all $a \in A$, 
\item [-] $(A;\cdot,1)$ is a commutative monoid (i.e. $\cdot$ is commutative, associative, with unit $1$), and
\item [-] $ \to$ is the residuum of $\cdot$, i.e.,\\
$a \cdot b \leq c$ iff $a \leq b \to c$, for all $a, b, c \in A$,
\end{itemize} 

\noindent where $\leq$ is the order given by the lattice structure.  A negation operator is defined as $\neg x = x\to 0$.

The class of residuated lattices will be denoted by $\mathbb{RL}$. 
It is well known that $\mathbb{RL}$ is an equational class 
and that it constitutes the algebraic semantics of the substructural logic FL$_{ew}$ (see Section \ref{Flew}).

\begin{example} \label{5}
In what follows we will have occasion to refer several times to the residuated lattice structure 
defined on the five-element lattice of Figure \ref{five-godel} by taking $\cdot = \land$ and $\to$ its residuum. 
With these operations, it actually becomes a five element G\"odel algebra, that is, 
a residuated lattice with $\cdot$ being idempotent and satisfying the pre-linearity law $(a \to b) \lor (b \to a) = 1$. 
\end{example}

\begin{figure} [ht]
\begin{center}

\begin{tikzpicture}
    \tikzstyle{every node}=[draw, circle, fill=white, minimum size=3pt, inner sep=0pt, label distance=1mm]
 
    \draw (0,0)		node (0) [label=right:$0$] {}    
    -- ++(90:1cm)	node (r) [label=right:$r$] {} 
    -- ++(135:1cm)	node (s) [label=left:$s$] {}
    -- ++(45:1cm)	node (1) [label=right:$1$] {}
    -- ++(315:1cm)	node (t) [label=right:$t$] {} 
    -- ++ (r); end
    \end{tikzpicture}
\end{center}
\caption{\label{five-godel} A five element G\"odel algebra} 
\end{figure}
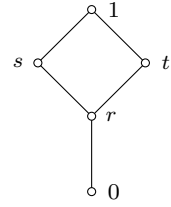

We omit the proof of the following well-known facts, see e.g. \cite{GJKO}. 

\begin{lem} \label{GRL} Let $\bf A \in \mathbb{RL}$. 
For any $a, b, c, d \in A$, the following properties hold: 

\vspace{5pt}

\emph{(i)} if $a \vee b = 1$, $a \leq c$, and $b \leq c$, then $c = 1$,

\emph{(ii)} if $a \vee b = 1$, $a\cdot c\leq d$, and $b\cdot c\leq d$, then $c \leq d$,

\emph{(iii)} if $a \leq b$, then $\neg b \leq \neg a$,

\emph{(iv)} $a \wedge \neg b \leq \neg(a \wedge b)$,

\emph{(v)} $a \cdot \neg b \leq \neg(a \to b)$,

\emph{(vi)} if $a \vee b = 1$, then $\neg a \leq b$,

\emph{(vii)} $a \leq \neg \neg a$.
\end{lem}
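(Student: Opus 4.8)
The plan is to treat these seven items as a short chain of derivations, all grounded in the residuation adjunction
$$x \cdot y \le z \iff x \le y \to z$$
together with its standard companions: the product $\cdot$ is monotone in each argument and preserves binary joins (having a right adjoint $\to$), integrality gives $x \cdot y \le y$, and two derived cancellation facts — $x \cdot (x \to y) \le y$ and $x \cdot \neg x \le 0$ — both immediate by residuating $x \to y \le x \to y$ and $\neg x \le \neg x$ respectively. With these in hand each clause becomes a one- or two-line computation, and a sensible order lets the later clauses reuse the earlier ones.

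I would begin with the order-theoretic clause (i): from $a \le c$ and $b \le c$ we get $a \vee b \le c$, so $1 \le c$, and integrality forces $c = 1$. For (ii) I would apply residuation twice: $a \cdot c \le d$ and $b \cdot c \le d$ give $a \le c \to d$ and $b \le c \to d$, hence $a \vee b = 1 \le c \to d$, and residuating back yields $c = 1 \cdot c \le d$. Clause (vii) is the shortest: $a \cdot \neg a \le 0$ residuates directly to $a \le \neg a \to 0 = \neg\neg a$.

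Next the negation-monotonicity clauses. For (iii), assuming $a \le b$, I would show $(b \to 0) \cdot a \le (b \to 0) \cdot b \le 0$ using monotonicity of $\cdot$ and the cancellation fact, then residuate to get $\neg b \le \neg a$. Clause (iv) then comes essentially for free: $a \wedge \neg b \le \neg b$, while $a \wedge b \le b$ gives $\neg b \le \neg(a \wedge b)$ by (iii), so the two inequalities compose. For (v) I would verify $a \cdot \neg b \cdot (a \to b) \le 0$ by first collapsing $a \cdot (a \to b) \le b$ and then using $b \cdot \neg b \le 0$; residuating against $0$ gives $a \cdot \neg b \le \neg(a \to b)$.

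The only clause needing genuine structural input rather than pure adjunction bookkeeping is (vi), which I regard as the main (mild) obstacle. Two routes are available. The cleanest is to invoke (ii), already proved, with $c = \neg a$ and $d = b$: one checks $a \cdot \neg a \le 0 \le b$ and, by integrality, $b \cdot \neg a \le b$, so (ii) delivers $\neg a \le b$. Alternatively one argues directly via distributivity of $\cdot$ over $\vee$: multiplying $a \vee b = 1$ by $\neg a$ gives $\neg a = (\neg a \cdot a) \vee (\neg a \cdot b) = 0 \vee (\neg a \cdot b) \le b$. I would present the first route, since it avoids having to invoke the join-preservation property explicitly and keeps the whole lemma inside a single, uniform residuation toolkit.
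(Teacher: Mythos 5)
Your proposal is correct in every clause. Note, however, that the paper gives no proof of this lemma at all: it states ``We omit the proof of the following well-known facts, see e.g.\ \cite{GJKO}'', so there is no in-paper argument to compare against --- your write-up is a self-contained substitute for a citation. Checking the details: (i) is pure lattice reasoning plus boundedness; (ii) is the double residuation $a, b \le c \to d$, hence $1 \le c \to d$, hence $c \le d$; (iii)--(v) and (vii) are the standard adjunction computations and all go through (for (v) you correctly need commutativity and associativity to regroup $a \cdot \neg b \cdot (a \to b)$, both available). Your treatment of (vi) is the one place where a genuine choice arises, and both of your routes are sound; it is worth noting that the paper itself, in places where it needs exactly this fact inside other proofs (the proof of Lemma \ref{BL} and of Proposition \ref{infmon}(xi)), uses your second route --- multiplying $a \vee b = 1$ by $\neg a$ (resp.\ $b$) and distributing $\cdot$ over $\vee$ --- rather than your preferred reduction to (ii). Your first route is arguably tidier since it reuses (ii) and avoids invoking join-preservation of $\cdot$ as a separate principle, at the cost of obscuring the distributivity fact that the rest of the paper leans on repeatedly; either is acceptable.
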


Special elements in a residuated lattice are those that behave as elements in a Boolean algebra. 

\begin{defi}
Let $\bf{A} \in \mathbb{RL}$. 
An element $a$ of its universe $A$ is called \emph{Boolean} or \emph{complemented} iff 
there is an element $b \in A$ such that $a \wedge b=0$ and $a \vee b=1$.
\end{defi}

In the rest of this section we state several properties of Boolean elements that will be useful in what follows. 
Even if most of them are folklore, we include proofs for all of them for the sake of being self-contained. 

An equivalent and simpler condition for an element to be Boolean is the following. 

\begin{lem} \label{BL}
An element $a$ in the universe of a residuated lattice is Boolean iff $a \vee \neg a = 1$.
\end{lem}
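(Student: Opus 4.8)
The plan is to prove both implications, in each case taking $\neg a$ as the canonical complement witness. For the easy direction, suppose $a$ is Boolean, so there is some $b$ with $a \wedge b = 0$ and $a \vee b = 1$. First I would note that integrality forces $a \cdot b \le a \wedge b = 0$ (since $a \le 1$ and $b \le 1$ give $a \cdot b \le a$ and $a \cdot b \le b$), whence $a \cdot b = 0$. By residuation this means $b \le a \to 0 = \neg a$, and therefore $1 = a \vee b \le a \vee \neg a$, giving $a \vee \neg a = 1$.

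For the converse, assume $a \vee \neg a = 1$ and show that $\neg a$ itself is a complement of $a$. One of the two defining conditions, namely $a \vee \neg a = 1$, is exactly the hypothesis, so the whole content is to establish $a \wedge \neg a = 0$. I expect this to be the main obstacle: unlike $a \cdot \neg a$, which collapses to $0$ immediately because $a \cdot \neg a = a \cdot (a \to 0) \le 0$, the meet $a \wedge \neg a$ need not reduce to the product, so a direct computation fails. The device I would use is item (ii) of Lemma \ref{GRL}: setting $c = a \wedge \neg a$ and $d = 0$, I would check that both $a \cdot c \le 0$ and $\neg a \cdot c \le 0$. Indeed $a \cdot c \le a \cdot \neg a = 0$ and $\neg a \cdot c \le \neg a \cdot a = 0$ by monotonicity of the product together with $c \le \neg a$ and $c \le a$. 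Since $a \vee \neg a = 1$ by assumption, item (ii) yields $c \le 0$, i.e. $a \wedge \neg a = 0$. Combined with the hypothesis, this exhibits $\neg a$ as a complement of $a$, so $a$ is Boolean.

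I would remark that the asymmetry between the two directions is exactly what makes the lemma useful: the condition $a \vee \neg a = 1$ is a single inequality, yet it secretly encodes the meet condition as well, and the bridge between them is the ``cancellation-by-a-cover'' principle of Lemma \ref{GRL}(ii). The only auxiliary facts needing care are the integrality inequality $a \cdot b \le a \wedge b$ and the residuation equivalence $a \cdot b \le 0 \iff b \le \neg a$, both of which are immediate from the defining adjunction $a \cdot b \le c \iff a \le b \to c$.
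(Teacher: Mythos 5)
Your proof is correct and follows essentially the same route as the paper's: residuation to get $b \leq \neg a$ in the easy direction, and Lemma \ref{GRL}(ii) applied with the cover $a \vee \neg a = 1$ to force $a \wedge \neg a = 0$ in the converse. The only (harmless) differences are streamlinings: the paper goes further and proves $b = \neg a$ exactly, which your monotonicity-of-join argument shows is unnecessary, and you apply Lemma \ref{GRL}(ii) with $d = 0$ directly where the paper takes $d = a \cdot \neg a$ and then observes this product is $0$.
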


\begin{proof2}
 $\Rightarrow)$ Suppose there is an element $b$ such that $a \wedge b=0$ and $a \vee b=1$. 
 First, using that $a \wedge b=0$ and $a \cdot b \leq a \wedge b$, we have that $a \cdot b=0$. 
 So, $b \leq a \to 0$, i.e. $b \leq \neg a$. 
 Secondly, we have that $\neg a=\neg a \cdot 1= \neg a \cdot (a \vee b)=
 (\neg a \cdot a) \vee (\neg a \cdot b)=\neg a \cdot b$. 
 So, $\neg a=\neg a \cdot b$. 
 As $\neg a \cdot b \leq b$, it follows that $\neg a \leq b$. 
 So, $b=\neg a$. 
 As we have that $a \vee b=1$, it follows that $a \vee \neg a =1$.

\noindent $\Leftarrow)$ By hypothesis, we have (i) $a \vee \neg a=1$. 
It is enough to see that $a \wedge \neg a = 0$. 
As $a \cdot \neg a = 0$, it is enough to prove that $a \wedge \neg a \leq a\cdot \neg a$. 
We have that $a \wedge \neg a \leq \neg a$. 
So, by monotonicity of $\cdot$, we have (ii) $a\cdot (a\wedge \neg a) \leq a\cdot \neg a$. 
We also have that $a \wedge \neg a \leq a$. 
So, again by monotonicity of $\cdot$, we have $\neg a \cdot (a\wedge \neg a) \leq \neg a \cdot a = a \cdot \neg a$. 
So, it follows (iii) $\neg a \cdot (a \wedge \neg a) \leq a\cdot \neg a$. 
Now, using Lemma \ref{GRL}(ii) with (i), (ii), and (iii), it follows that $a \wedge \neg a \leq a \cdot \neg a$. 
\end{proof2}

\begin{pro} \label{infmon}
Let $\bf{A} \in \mathbb{RL}$ and let $a$ be a Boolean element of its universe $A$. 
Then, for all $b, c, d \in A$ the following properties hold: 

\vspace{5pt}

\emph{(i)} $a\wedge b = a\cdot b$,

\emph{(ii)} $a\cdot a = a$,

\emph{(iii)} $a \wedge \neg a = 0$,

\emph{(iv)} $a \wedge(b \vee c) = (a \wedge b)\vee (a \wedge c)$,

\emph{(v)} $\neg \neg a = a$,

\emph{(vi)} $a \to b = \neg a \vee b$, 

\emph{(vii)} $0 = \neg(a \vee \neg a)$,

\emph{(viii)} if $a \leq b \vee c$, $a \wedge b \leq d$, $a \wedge c \leq d$, then $a \leq d$, 

\emph{(ix)} if $b \vee c=1$, $a \wedge b \leq d$,  $a \wedge c \leq d$, then $a \leq d$, 

\emph{(x)} if $a \wedge b \leq c$, then $a \wedge \neg c \leq \neg b$,  

\emph{(xi)} if $a \vee \neg b=1$, then $b \leq a$. 
\end{pro}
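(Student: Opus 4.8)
The plan is to exploit a single structural fact throughout: because $a$ is Boolean, Lemma~\ref{BL} gives the partition $a \vee \neg a = 1$, and this turns almost every claim into a two-case verification handled uniformly by Lemma~\ref{GRL}(ii). The natural order is \emph{not} the listed one; I would establish the multiplicative collapse (iii) and (i) first, since (ii), (iv), (viii) and (ix) all cascade from (i), and only then dispatch the negation identities and the remaining order facts.

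I would begin with (iii): since $a \vee \neg a = 1$ by Lemma~\ref{BL}, the $\Leftarrow$ direction of that lemma already yields $a \wedge \neg a = 0$, and in particular $a \cdot \neg a \le a \wedge \neg a = 0$. This gives the keystone (i): the inequality $a \cdot b \le a \wedge b$ holds in every integral residuated lattice, so only $a \wedge b \le a \cdot b$ needs work, and I would obtain it from Lemma~\ref{GRL}(ii) applied to $a \vee \neg a = 1$ with $c = a \wedge b$ and $d = a \cdot b$, checking $a\cdot(a\wedge b)\le a\cdot b$ and $\neg a\cdot(a\wedge b)\le \neg a\cdot a = 0 \le a\cdot b$. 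Then (ii) is (i) with $b=a$; (iv) follows by rewriting $a\wedge(b\vee c)=a\cdot(b\vee c)=(a\cdot b)\vee(a\cdot c)=(a\wedge b)\vee(a\wedge c)$ using (i) and the standard distributivity of $\cdot$ over $\vee$; (viii) then reads $a=a\wedge(b\vee c)=(a\wedge b)\vee(a\wedge c)\le d$; and (ix) is the instance of (viii) with $b\vee c=1\ge a$.

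For the negation facts I would lean on Lemma~\ref{GRL} directly. Claim (v) combines $a\le\neg\neg a$ (Lemma~\ref{GRL}(vii)) with $\neg\neg a\le a$, the latter being Lemma~\ref{GRL}(vi) applied to $\neg a\vee a=1$. Claim (vii) is immediate from $a\vee\neg a=1$ and $\neg 1 = 0$. Claim (vi) again uses the partition: $\neg a\vee b\le a\to b$ holds always, and the converse follows from Lemma~\ref{GRL}(ii) with $c=a\to b$ and $d=\neg a\vee b$, since $a\cdot(a\to b)\le b$ and $\neg a\cdot(a\to b)\le\neg a$. For (x) I would pass to products via (i): from $a\wedge b=a\cdot b\le c$ one gets $(a\wedge\neg c)\cdot b=(a\cdot\neg c)\cdot b=(a\wedge b)\cdot\neg c\le c\cdot\neg c=0$, whence $a\wedge\neg c=a\cdot\neg c\le\neg b$ by residuation. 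Finally (xi) is a clean application of Lemma~\ref{GRL}(ii) to $a\vee\neg b=1$ with $c=b$ and $d=a$: indeed $a\cdot b\le a$ and $\neg b\cdot b=0\le a$, so $b\le a$.

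The main obstacle is really just getting (i) off the ground, because every distributive consequence rests on the collapse of $\cdot$ and $\wedge$ at a Boolean element; once the case-split via Lemma~\ref{GRL}(ii) together with $a\cdot\neg a=0$ is in place, the rest is bookkeeping. The only other point demanding care is (x), where one must route the argument through products rather than meets so that the residuation identity $c\cdot\neg c=0$ can be exploited.
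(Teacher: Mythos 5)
Your proposal is correct, and its core architecture is the same as the paper's: get the partition $a \vee \neg a = 1$ from Lemma~\ref{BL}, establish the keystone (i) by the two-case argument of Lemma~\ref{GRL}(ii) (checking $a\cdot(a\wedge b)\le a\cdot b$ and $\neg a\cdot(a\wedge b)\le \neg a\cdot a=0$), let (ii), (iv), (viii), (ix) cascade from (i), and route (x) through products so that $c\cdot\neg c=0$ can be used. The differences are local, and mostly to your credit. For (iii), the paper simply instantiates (i) at $b=\neg a$, giving $a\wedge\neg a=a\cdot\neg a=0$; you instead appeal to the $\Leftarrow$ direction of Lemma~\ref{BL}, which is really a citation of that lemma's \emph{proof} rather than its statement (the statement only asserts existence of a complement, not that $\neg a$ is it) --- harmless, since your proof of (i) never uses (iii), but the paper's one-liner is tidier. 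For (v), you combine Lemma~\ref{GRL}(vi) and (vii), whereas the paper runs another GRL(ii) case split; both work. For (vii), your direct computation $\neg(a\vee\neg a)=\neg 1=0$ is cleaner than the paper's detour via antitonicity. Most notably, your (xi) applies Lemma~\ref{GRL}(ii) to $a\vee\neg b=1$ with $c=b$, $d=a$, which is correct and never uses that $a$ is Boolean (so it actually proves (xi) for arbitrary $a$), whereas the paper distributes $b\cdot(a\vee\neg b)$ and then needs (i) to turn $b\cdot a$ into $b\wedge a$. In short: same approach, with a few equally valid and occasionally more economical local arguments.
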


\begin{proof2}
(i) Suppose that (i) $a\vee \neg a = 1$. It is enough to see that $a \wedge b \leq a \cdot b$. 
We have that $a \wedge b \leq b$. 
So, by monotonicity of $\cdot$, we have (ii) $a \cdot (a \wedge b) \leq a \cdot b$. 
We also have that $a \wedge b \leq a$. 
So, again by monotonicity of $\cdot$, we have $\neg a \cdot (a \wedge b) \leq \neg a \cdot a = 0$. 
So, it follows (iii) $\neg a \cdot (a \wedge b) \leq a \cdot b$. 
Now, using Lemma \ref{GRL}(ii) with (i), (ii), and (iii), it follows that $a \wedge b \leq a \cdot b$.

\smallskip

(ii) Using Part (i), we have $a \wedge a = a \cdot a$. Also, $a \wedge a = a$. So, $a \cdot a = a$.

\smallskip

(iii) Using Part(i), we have $a \wedge \neg a = a \cdot \neg a$. Also, $a \cdot \neg a = 0$. So, $a \wedge \neg a = 0$.

\smallskip

(iv) In a residuated lattice it holds that $a \cdot(b \vee c)=(a \cdot b) \vee (a \cdot c)$. 
Let $a$ be Boolean. 
Then, using Part (i) three times, it follows that $a \wedge (b \vee c) = (a \wedge b) \vee (a \wedge c)$. 

\smallskip

(v) Suppose that (i) $a \vee \neg a = 1$. 
It is enough to see that $\neg \neg a \leq a$. We have that (ii) $a \cdot \neg \neg a \leq a$. 
Also, (iii) $\neg a \cdot \neg \neg a \leq a$, as $\neg a \cdot \neg \neg a = 0$. 
So, using Lemma \ref{GRL}(ii) with (i), (ii), and (iii), $\neg \neg a \leq a$.

\smallskip

(vi) It is enough to prove (i) $(\neg a \vee b) \cdot a \leq b$ and (ii) if $x \cdot a \leq b$, then $x \leq \neg a \vee b$. 
To see (i), note that $\neg a \cdot a \leq b$ and $b \cdot a \leq b$, whence $(\neg a \cdot a) \vee (b \cdot a) \leq b$. 
So, using distributivity of $\cdot$ relative to $\vee$, we get (i). 
To see (ii), suppose $x \cdot a \leq b$. 
Then, $x \leq a \to b$. 
In order to get $x \leq \neg a \vee b$, it is enough to derive (iii) $a \to b \leq \neg a \vee b$ and use transitivity of $\leq$. 
To get (iii), let us use Lemma \ref{GRL}(ii). 
As $a$ is Boolean, we have $a \vee \neg a=1$. 
Now, $a \cdot (a\to b) \leq b \leq \neg a \vee b$. Also, $\neg a \cdot (a \to b) \leq \neg a \leq \neg a \vee b$. 
So, using Lemma \ref{GRL}(ii), we get (iii). 

\smallskip

(vii) As we have $\neg c \leq a \vee \neg a$, for any $c \in A$, then, using Lemma \ref{GRL}(iii) and Part (iv), 
we get $\neg(a \vee \neg a) \leq \neg \neg c \leq c$.

\smallskip

(viii) As $a \leq b \vee c$, we have $a \leq a \wedge (b \vee c)$. 
Now, using Part (iv), it follows that $a \leq (a \wedge b) \vee (a \wedge c)$. 
Now, as $a \wedge b \leq d$ and $a \wedge c \leq d$, we have, by a basic property of $\vee$, 
that $(a \wedge b) \vee (a \wedge c) \leq d$. 
So, by transitivity of $\leq$, $a \leq d$. 

\smallskip

(ix) As $b \vee c=1$, reason as in Part (viii).

\smallskip

(x) Suppose $a \wedge b \leq c$. 
Then $a \cdot b \leq c$. 
By monotonicity of $\cdot$, it follows that $(a \cdot b) \cdot \neg c \leq c \cdot \neg c = 0$. 
Then, as $\cdot$ is both associative and commutative, $(a \cdot \neg c) \cdot b \leq 0$. 
So, $a \cdot \neg c \leq \neg b$. 
Finally, using that $a$ is Boolean, we get $a \wedge \neg c \leq \neg b$. 

\smallskip

(xi) Let $a \vee \neg b=1$. Then, $b \cdot (a \vee \neg b)=b \cdot 1 = b$. 
By distributivity of $\cdot$ relative to $\vee$, it follows that $(b \cdot a) \vee (b \cdot \neg b) = b$. 
As $b \cdot \neg b=0$, we have that $b \cdot a=b$. 
So, as $a$ is Boolean, $b \wedge a = b$, i.e. $b \leq a$.
\end{proof2}

\begin{lem} \label{inflem}
Let $\bf{A} \in \mathbb{RL}$ and let $a$ and $b$ be Boolean elements of $A$. 
Then, 

\smallskip

\emph{(i)} $a \wedge b = a \cdot b =\neg (\neg a \vee \neg b)$,

\smallskip

\emph{(ii)} $(a\cdot b) \vee (\neg a \cdot b) \vee (a\cdot \neg b) \vee (\neg a\cdot \neg b) = 1$.

\end{lem}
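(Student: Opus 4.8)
The plan is to prove each part by exploiting the fact that $a$ and $b$ are Boolean, which by Lemma \ref{BL} means $a \vee \neg a = 1$ and $b \vee \neg b = 1$, together with the many identities already collected in Proposition \ref{infmon} (especially parts (i), (iii), (v), (vi)). The key observation is that for Boolean elements meet coincides with monoid product, negation is involutive, and implication reduces to $a \to b = \neg a \vee b$, so the computations should collapse to standard Boolean-algebra manipulations.

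For part (i), I would first show that $a \cdot b$ is itself Boolean, or at least that $\neg a \vee \neg b$ is the complement of $a \cdot b$. Using Proposition \ref{infmon}(i), $a \wedge b = a \cdot b$ since $a$ is Boolean. To identify this with $\neg(\neg a \vee \neg b)$, I would apply De Morgan: it suffices to prove $\neg(\neg a \vee \neg b) = a \wedge b$. Since $\neg a$ and $\neg b$ are also Boolean (their complements are $a$ and $b$ respectively by involutivity, Proposition \ref{infmon}(v)), I can compute $\neg(\neg a \vee \neg b)$ by showing $(\neg a \vee \neg b) \vee \neg(\neg a \vee \neg b) = 1$ and then using the complement characterization. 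More directly, I would verify that $a \wedge b$ and $\neg a \vee \neg b$ are complementary: their meet is $(a \wedge b) \wedge (\neg a \vee \neg b)$, which distributes (by Proposition \ref{infmon}(iv)) into $(a \wedge b \wedge \neg a) \vee (a \wedge b \wedge \neg b)$, each disjunct being $0$ by Proposition \ref{infmon}(iii); and their join is $1$ by a dual distributive computation. Hence $\neg a \vee \neg b$ is the Boolean complement of $a \wedge b$, so $\neg(a \wedge b) = \neg a \vee \neg b$, and applying $\neg$ to both sides with involutivity gives $a \wedge b = \neg(\neg a \vee \neg b)$.

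For part (ii), the identity $(a \cdot b) \vee (\neg a \cdot b) \vee (a \cdot \neg b) \vee (\neg a \cdot \neg b) = 1$ expresses that the four ``atoms'' generated by $a$ and $b$ cover the top. I would replace every product by a meet using Proposition \ref{infmon}(i) (legitimate since $a$, $b$, $\neg a$, $\neg b$ are all Boolean), turning the left side into $(a \wedge b) \vee (\neg a \wedge b) \vee (a \wedge \neg b) \vee (\neg a \wedge \neg b)$. Then I would factor by distributivity (Proposition \ref{infmon}(iv)): group the first two terms as $(a \vee \neg a) \wedge b = 1 \wedge b = b$, and the last two as $(a \vee \neg a) \wedge \neg b = \neg b$, so the whole expression reduces to $b \vee \neg b$, which equals $1$ because $b$ is Boolean.

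The main obstacle is part (i), specifically establishing the De Morgan law $\neg(a \wedge b) = \neg a \vee \neg b$ for Boolean elements, since in a general residuated lattice only one inequality of De Morgan holds automatically and the reverse direction genuinely needs the complementation hypothesis. Once I confirm that $\neg a \vee \neg b$ serves as the complement of $a \wedge b$ via the two distributive computations, the rest of both parts is routine symbol-pushing using the already-proven Proposition \ref{infmon}; part (ii) in particular should be nearly immediate from distributivity. I would take care that each application of distributivity (Proposition \ref{infmon}(iv)) is justified by having a Boolean element in the meet position.
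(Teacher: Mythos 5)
Your part (ii) is correct and complete: converting the four products to meets via Proposition \ref{infmon}(i) (legitimate because $\neg a$ and $\neg b$ are Boolean by involutivity) and then factoring with Proposition \ref{infmon}(iv), with the Boolean elements $b$ and $\neg b$ in the meet position, is a valid alternative to the paper's computation, which runs in the opposite direction: it expands $1 = (a \vee \neg a)\cdot(b \vee \neg b)$ using distributivity of $\cdot$ over $\vee$.

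The gap is in part (i), exactly at the point you flag as the main obstacle: the claim that $(a \wedge b) \vee (\neg a \vee \neg b) = 1$ holds ``by a dual distributive computation.'' No such law is available. The lattice reduct of a residuated lattice need not be distributive, and the only distributivity the paper provides is Proposition \ref{infmon}(iv), i.e.\ meet over join with a Boolean element in the \emph{meet} position; its order-dual (join over meet) is neither stated nor proved anywhere, and for the instance you need it the distributing element would be $\neg a \vee \neg b$, whose Booleanness is Proposition \ref{infmon2}(ii) --- a result proved \emph{after} this lemma, using part (ii) of it. So as written this half of your complementation check is unjustified. It is repairable with tools at hand: from your own part (ii), $1 = (a\cdot b) \vee (\neg a \cdot b) \vee (a\cdot \neg b) \vee (\neg a\cdot \neg b) \leq (a \wedge b) \vee \neg a \vee \neg b$, since each product lies below the corresponding disjunct; alternatively, Proposition \ref{infmon}(ix) (with Boolean $a$ and $b \vee \neg b = 1$) gives $a \leq (a \wedge b) \vee \neg a \vee \neg b$, and then Lemma \ref{GRL}(i) finishes. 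A second, smaller defect: your meet computation distributes $(a \wedge b) \wedge (\neg a \vee \neg b)$ with $a \wedge b$ as the distributing element, but $a \wedge b$ is not yet known to be Boolean (that is what is being proved); rearrange by associativity and commutativity of $\wedge$ so that $b$, and then $a$, does the distributing --- you flag this requirement yourself, so this is easily fixed. Finally, for contrast: the paper's proof of (i) avoids complements and De Morgan entirely, verifying directly that $\neg(\neg a \vee \neg b)$ is the greatest lower bound of $a$ and $b$ using only antitonicity of $\neg$ (Lemma \ref{GRL}(iii)), $\neg\neg a = a$ and $\neg\neg b = b$ (Proposition \ref{infmon}(v)), and $c \leq \neg\neg c$; once your join step is repaired, your route also works, but it is the longer of the two.
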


\begin{proof2}
(i) Firstly, we have that $\neg a \leq \neg a \vee \neg b$. 
So, using Lemma \ref{GRL}(iii), it follows that $\neg(\neg a \vee \neg b) \leq \neg \neg a$. 
Now, using Proposition \ref{infmon}(v) and $\leq$-transitivity, we have that $\neg(\neg a \vee \neg b) \leq a$. 
Analogously, we get $\neg(\neg a \vee \neg b) \leq b$. 
Secondly, suppose $c \leq a$ and $c \leq b$, for $c \in A$. 
Then, using Lemma \ref{GRL}(iii) again, it follows that $\neg a \leq \neg c$ and $\neg b \leq \neg c$. 
So, $\neg a \vee \neg b \leq \neg c$. 
Then, using Lemma \ref{GRL}(iii) once again, $\neg \neg c \leq \neg(\neg a \vee \neg b)$. 
Now, using Proposition \ref{infmon}(v) and $\leq$-transitivity, we get $c \leq \neg(\neg a \vee \neg b)$. 

\smallskip

(ii) Suppose that $a \vee \neg a=b \vee \neg b=1$. 
Then, $1=(a \vee \neg a) \cdot(b \vee \neg b)=(a\cdot b)\vee (\neg a\cdot b) \vee (a \cdot \neg b) \vee (\neg a\cdot \neg b)$.
\end{proof2}

\begin{pro} \label{infmon2}
Let $\bf{A} \in \mathbb{RL}$ and let $a$ and $b$ be Boolean elements of $A$. 
Then, (i) $\neg a$, (ii) $a \vee b$, (iii) $a \wedge b=a \cdot b$, (iv) $a \to b$, (v) $0$, and (vi) $1$ are Boolean. 
\end{pro}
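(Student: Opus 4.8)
The plan is to reduce every case to the simple criterion of Lemma~\ref{BL}: an element $x$ is Boolean iff $x \vee \neg x = 1$. The cases (v) and (vi) are immediate, since $\neg 0 = 0 \to 0 = 1$ gives $0 \vee \neg 0 = 1$, and $1 \vee \neg 1 = 1$ trivially. For (i), I would use that $a$ is Boolean together with Proposition~\ref{infmon}(v), which gives $\neg\neg a = a$; then $\neg a \vee \neg\neg a = \neg a \vee a = 1$, so $\neg a$ is Boolean. Note that the remaining items are not independent: I will prove (ii) using (i), and then both (iii) and (iv) using (i) and (ii), so the order of the argument matters.

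For (ii), I would show $(a \vee b)\vee \neg(a \vee b) = 1$ by exhibiting $\neg(a\vee b)$ as one of the ``cells'' of Lemma~\ref{inflem}(ii). That lemma gives $(a\cdot b)\vee(\neg a\cdot b)\vee(a\cdot\neg b)\vee(\neg a\cdot\neg b)=1$. The first three joinands are each below $a\vee b$, and for the last one a short computation $\neg a\cdot\neg b\cdot(a\vee b)=0$ (using $a\cdot\neg a = b \cdot \neg b = 0$ and distributivity of $\cdot$ over $\vee$) yields, by residuation, $\neg a\cdot\neg b\leq\neg(a\vee b)$. Hence $1\leq(a\vee b)\vee\neg(a\vee b)$, and (ii) follows.

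For (iii), the key is to identify the complement of $a\wedge b$. By Lemma~\ref{inflem}(i), $a\wedge b = a\cdot b = \neg(\neg a\vee\neg b)$, so $\neg(a\wedge b)=\neg\neg(\neg a\vee\neg b)$. Since $\neg a$ and $\neg b$ are Boolean by (i) and their join is Boolean by (ii), Proposition~\ref{infmon}(v) collapses the double negation and gives $\neg(a\wedge b)=\neg a\vee\neg b$. Then $(a\wedge b)\vee\neg(a\wedge b)=(a\cdot b)\vee\neg a\vee\neg b = 1$, again by Lemma~\ref{inflem}(ii), since the remaining cells $\neg a\cdot b$, $a\cdot\neg b$, $\neg a\cdot\neg b$ all lie below $\neg a\vee\neg b$. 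Finally (iv) is the quickest: Proposition~\ref{infmon}(vi) gives $a\to b=\neg a\vee b$, which is Boolean by (i) and (ii).

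I expect the only real obstacle to be the bookkeeping in (iii): correctly pinning down $\neg(a\wedge b)=\neg a\vee\neg b$, which genuinely requires the Booleanness results (i) and (ii) to be already in hand (so that Proposition~\ref{infmon}(v) applies to $\neg a\vee\neg b$), rather than being a formal manipulation valid in every residuated lattice.
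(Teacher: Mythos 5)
Your proof is correct and follows essentially the same route as the paper: everything is reduced to the criterion of Lemma~\ref{BL}, part (ii) uses the four-cell decomposition of Lemma~\ref{inflem}(ii), and parts (iii) and (iv) are derived from (i) and (ii) via Lemma~\ref{inflem}(i) and Proposition~\ref{infmon}(vi), in the same order of dependence. The only divergence is in (iii), where the paper is quicker than you expect: since $a \wedge b = a \cdot b = \neg(\neg a \vee \neg b)$ by Lemma~\ref{inflem}(i) and $\neg a \vee \neg b$ is already known to be Boolean by (i) and (ii), a single further application of (i) shows $a \wedge b$ is Boolean, so the bookkeeping you flag as the ``only real obstacle'' (pinning down $\neg(a\wedge b) = \neg a \vee \neg b$ and revisiting Lemma~\ref{inflem}(ii)) can be skipped entirely.
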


\begin{proof2}

\smallskip

(i) Suppose $a \vee \neg a=1$. 
Then, as $a \leq \neg \neg a$, we get $\neg \neg a \vee \neg a = 1$.

\smallskip

\noindent (ii) Use Lemma \ref{inflem} (ii) and see that

\smallskip

$a\cdot b \leq a \leq a \vee b \leq (a\vee b) \vee \neg(a\vee b)$,

\smallskip

$\neg a\cdot b \leq b \leq a \vee b \leq (a\vee b) \vee \neg(a\vee b)$,

\smallskip

$a\cdot \neg b \leq a \leq a \vee b \leq (a\vee b) \vee \neg(a\vee b)$, and

\smallskip

$\neg a\cdot \neg b \leq \neg a \wedge \neg b \leq \neg (a\vee b) \leq (a\vee b) \vee \neg(a\vee b)$.

\smallskip

So, $a\vee b$ is Boolean.

\medskip

\noindent (iii) Use Parts (i) and (ii), and Lemma \ref{inflem}(i).

\medskip

\noindent (iv) Use Parts (i) and (ii), and Proposition \ref{infmon}(vi).

\medskip

\noindent (v) Use Parts (i) and (ii), and Proposition \ref{infmon}(vii).

\medskip

\noindent (vi) Use the definition of Boolean element and the fact that $\neg 1=0$.
\end{proof2}

From Proposition \ref{infmon2} it easily follows that, in any residuated lattice $\bf A$, 
the set of its Boolean elements $B(A)=\{a\in A: a$ is Boolean$\}$ is the domain of a subalgebra of $\bf A$, 
which is in fact a Boolean algebra. 
Indeed, ${\bf{B(A)}}=(B(A); \wedge, \vee, \cdot, \to, 0, 1)$ is the greatest Boolean algebra contained in {\bf{A}}. 
{\bf{B(A)}} is called the Boolean skeleton or  the \emph{center} of {\bf{A}}.

\subsection{On the logic FL$_{ew}$ }\label{Flew}

The logics we are interested in are extensions or expansions of the logic FL$_{ew}$ described below.

\begin{defi}
The language of {\rm FL$_{ew}$} has four binary connectives, $\wedge $, $\vee$, $\cdot$, and $\rightarrow$, and 
two constants, $0$ and $1$. 

\noindent The axioms of {\rm FL$_{ew}$} are:

\vspace{5pt}

\begin{tabular}{ll}
\emph{(1)} & $(\varphi \to \psi )\to ((\psi \to \gamma) \to (\varphi \to \gamma))$, \\ 
\emph{(2)} & $(\gamma \to \varphi) \to ((\gamma \to \psi) \to (\gamma \to (\varphi \wedge \psi)))$, \\ 
\emph{(3)} & $(\varphi \wedge \psi ) \to \varphi $ and $(\varphi \wedge \psi) \to \psi$, \\ 
\emph{(4)} & $\varphi \to (\varphi \vee \psi)$ and $\psi \to (\varphi \vee \psi)$, \\ 
\emph{(5)} & $(\varphi \to \gamma)\to ((\psi \to \gamma)\to ((\varphi \vee \psi)\to \gamma))$, \\  
\emph{(6)} & $(\varphi \cdot \psi )\to (\psi \cdot \varphi)$, \\ 
\emph{(7)} & $(\varphi \cdot \psi )\rightarrow \varphi $, \\ 
\emph{(8)} & $(\varphi \to (\psi \to \gamma )) \to ((\varphi \cdot \psi ) \to \gamma )$, \\ 
\emph{(9)} & $((\varphi \cdot \psi ) \to \gamma )\to (\varphi \to (\psi \to \gamma))$, \\ 
\emph{(10)} & $0 \to \varphi$ and $\varphi \to 1$. \\
\end{tabular}

\vspace{5pt}

\noindent The only rule of {\rm FL$_{ew}$} is \emph{modus ponens}: 
\[ {\frac{\varphi \quad \varphi \to \psi }{\psi }}. \]
\end{defi}

\noindent We define $\lnot \varphi = \varphi \to 0$ and 
$\varphi \leftrightarrow \psi = (\varphi \to \psi )\wedge (\psi \to \varphi )$. 

\medskip

The following formulas and rules are derivable in FL$_{ew}$:

\begin{tabular}{ll}

(11) & $\displaystyle{\frac{\varphi \to \psi \qquad \psi \to \gamma} {\varphi \to \gamma}}$, \\ 

(12) & $\varphi \to (\psi \to \varphi)$, \\ 

(13) & $\varphi \to \varphi$, \\ 

(14) & $\varphi \to (\varphi \cdot 1)$ and $(\varphi \cdot 1) \to \varphi$, \\ 

(15) & $(\varphi \cdot (\psi \cdot \gamma)) \leftrightarrow ((\varphi \cdot \psi)\cdot \gamma)$ \\  

(16) & $(\varphi\cdot (\varphi \to \psi)) \to \psi$, \\ 

(17) & $(1 \to \varphi) \to \varphi$, $\varphi \to (1 \to \varphi)$, \\ 

(18) & $\displaystyle{\frac{\varphi \qquad \psi}{\varphi \wedge \psi}}$, \\  

(19) & $(\varphi \vee \neg \varphi) \to \neg (\varphi \wedge \neg \varphi)$. \\ 
\end{tabular}

\vspace{5pt}

\noindent Derivations for (11)-(19) are rather easy. Hence, they are left to the reader.

We will occasionally consider the following extensions of FL$_{ew}$.

\begin{defi} \mbox{}  Consider the following axiomatic extensions of {\rm FL$_{ew}$}: 
\begin{itemize}

\item  Intutitionistic logic {\rm IL} is {\rm FL$_{ew}$} plus the axiom 

\noindent {\bf{(Contr)}} $\qquad  \varphi \to (\varphi \cdot \varphi)$.

\smallskip

\item The logic {\rm MTL} is {\rm FL$_{ew}$} plus the axiom

\noindent {\bf{(Prel)}} $\qquad  (\varphi \to \psi ) \vee (\psi \to \varphi)$.
\smallskip

\item {\rm SMTL} logic is {\rm MTL}  plus the axiom

{\bf (PC)} $\qquad  \neg(\varphi \land \neg \varphi) $
\smallskip

\item {\rm WNM} logic is {\rm MTL}  plus the axiom

{\bf (WNM)} $\qquad  \neg(\varphi \& \psi) \lor (\varphi \land \psi \to \varphi \& \psi)$
\smallskip

\item {\rm NM} logic is {\rm WNM}  plus the axiom

{\bf (Inv)} $\qquad  \neg \neg \varphi \to \varphi $
\smallskip

\item {\rm BL} logic is {\rm MTL}  plus the axiom

{\bf (Div)} $\qquad   (\varphi \land \psi)  \to ( \varphi \& (\varphi \to \psi))$
\smallskip

\item {\rm Product} logic is {\rm BL}  plus (PC) and  the axiom

{\bf (C)} $\qquad \neg\varphi \lor ((\varphi \to \varphi \& \psi) \to \psi)$
\smallskip

\item  G\"odel logic {\rm G} is  {\rm BL} plus  {\bf (Contr)} 
\smallskip

\item  {\L}ukasiewicz  logic {\rm \L} is  {\rm BL} plus  {\bf (Inv)}.
\end{itemize}
\end{defi}

\begin{rem}
 Note that G\"odel logic {\rm G} arises also as {\rm MTL} plus {\bf (Contr)} or as {\rm IL} plus  {\bf (Prel)}.  
\end{rem}

In MTL and its extensions we may define $\phi \vee \psi :=((\phi \to \psi) \to \psi)\wedge ((\psi \to \phi) \to \phi)$. 
In BL and its extensions we may define $ \varphi \land \psi :=  \varphi \& (\varphi \to \psi)$. 
Moreover, in IL the formula $(\phi \cdot \psi)\leftrightarrow (\phi \wedge \psi)$ is derivable, 
i.e. connectives $\wedge$ and $\cdot$ coincide in IL. 

All these logics are algebraizable, 
and hence they are strongly complete with respect to their corresponding classes of algebras. 
Namely, FL$_{ew}$ is complete with respect to the variety  $\mathbb{RL}$ of residuated lattices, 
MTL is complete with respect to the variety  of pre-linear residuated lattices (MTL-algebras), and 
IL is complete with respect to the variety  of contractive residuated lattices (Heyting algebras). 
Moreover, all axiomatic extensions of MTL are semilinear logics, that is, 
they are strongly complete with respect to the corresponding class of linearly ordered algebras. 
For instance, G\"odel logic is complete with respect to the class of  linearly ordered Heyting algebras, or G\"odel chains.

\section{Residuated lattices enriched with $B$}

As explained in the previous section, 
the set of Boolean elements of a residuated lattice $\bf A$ forms a Boolean algebra denoted \emph{the center or Boolean skeleton of $\bf A$}. 
Cignoli and Monteiro considered Boolean elements in \L ukasiewicz algebras in \cite{Ci} and \cite{CM}. 
However, as far as we know, the operator defining the greatest Boolean element below, i.e. the operator $B$ studied in this paper, 
has not yet  been studied in the general context of residuated lattices. 
One relevant exception is the paper \cite{RZ}, 
where Reyes and Zolfaghari define modal operators $\Box$ and $\Diamond$ in the context of Bi-Heyting algebras 
that are shown to correspond respectively to the greatest and the smallest complemented element below and above, respectively. 
Thus, the $\Box$ operation coincides with $B$. 
In the cited paper, using dual negation (or join-complement) $D$, always in the context of Bi-Heyting algebras, 
the authors also study a family of modal operators $\Box_n$ and $\Diamond_n$, 
in a similar way to the one we shall employ in Section \ref{join-complement}.

We will be considering residuated lattices {\bf{A}} enriched with 
a unary operation $B$ such that, for all $a \in A$, $Ba$ is the greatest Boolean element below $a$, as defined in the Introduction. 
It is clear that $B$ can be characterized by the following three conditions, for $a$, $b$ in $A$: 

\medskip

\begin{tabular}{ll}
{\bf{(BE1)}} & $Ba \leq a$, \\ 
{\bf{(BE2)}} & $Ba \vee \neg Ba=1$, \\  
{\bf{(BI)}}  & if $b \leq a$ and $b \vee \neg b=1$, then $b \leq Ba$. 
\end{tabular}

\medskip

\noindent The class of residuated lattices with $B$ will be denoted by $\mathbb{RL}^B$. 
Namely, an $\mathbb{RL}^B$-algebra is an algebra ${\bf A} =(A; \wedge, \vee, \cdot, \to, B, 0, 1)$ such that 
$(A; \wedge, \vee, \cdot, \to, 0, 1)$ is a residuated lattice and $B$ satisfies the above three conditions. 

\medskip

First of all, note that $B$ is new, that is, $B$ is not expressible by a $\{ \wedge, \vee, \cdot, \to, 0 \}$-term. 
Indeed, for instance, in the G\"{o}del algebra ${\bf G}_2 \times {\bf G}_3$  
(the direct product of the two-element Boolean algebra with universe $\{0, 1\}$ 
and the three-element G\"odel algebra with universe $\{0, \frac{1}{2}, 1\}$) 
we have, for any $\{ \wedge, \vee, \cdot, \to 0 \}$-term $t$, 
that $ta \in \{0, a, 1\}$, where $a =(1, \frac{1}{2})$ is the join reducible coatom, 
while $Ba = (1, 0)$ is the join-irreducible atom, which does not belong to  $\{0, a, 1\}$. 

In the next proposition we see that all operations remain independent. 

\begin{pro}
The set of operators $\{\wedge, \vee, \cdot, \to, B, 0 \}$ is independent.
\end{pro}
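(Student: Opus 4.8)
The plan is to prove that each of the six operations is not term-definable from the remaining five, using the standard clone-theoretic criterion: any term operation built from a set $S$ of basic operations preserves every relation (a subuniverse, the graph of a map, or a binary relation) that is compatible with all operations in $S$. Hence, to show that a given operator $o$ does not belong to the clone generated by the others, it suffices to exhibit one finite $\mathbb{RL}^B$-algebra together with a relation that is compatible with all operations in $\{\wedge,\vee,\cdot,\to,B,0\}\setminus\{o\}$ but is not preserved by $o$. Note that $1$ is absent from the list precisely because $1=0\to 0$ is already definable, so exactly the six listed operations need to be separated.

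Two of the cases are immediate. For $0$, observe that the singleton $\{1\}$ is closed under $\wedge,\vee,\cdot,\to$ and $B$ (since $1\wedge 1=1\vee 1=1\cdot 1=1\to 1=B1=1$) but does not contain $0$ in any nontrivial algebra; thus no $\{\wedge,\vee,\cdot,\to,B\}$-term can equal the constant $0$. For $B$, I would reuse the computation in the paragraph preceding this proposition: in ${\bf G}_2\times{\bf G}_3$ the three-element set $\{0,a,1\}$, with $a=(1,\tfrac12)$, is a subuniverse of the $\{\wedge,\vee,\cdot,\to,0\}$-reduct, yet $Ba=(1,0)\notin\{0,a,1\}$; hence $B$ is not definable from $\{\wedge,\vee,\cdot,\to,0\}$.

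The remaining four cases, for the binary connectives $\wedge$, $\vee$, $\cdot$ and $\to$, are the substantial ones. For each I would construct an explicit small $\mathbb{RL}^B$-algebra carrying either a reduct-endomorphism or a compatible binary relation that separates the target operation from the other five. A natural pool of test algebras is the three-element \L{}ukasiewicz chain together with small G\"odel/Heyting and product-like algebras; in particular the separating example for $\cdot$ must be non-idempotent (so that $\cdot\neq\wedge$), for otherwise $\cdot$ collapses to $\wedge$ and cannot be distinguished at all. In each case the verification then reduces to checking compatibility of the chosen relation with the five retained operations, which is a finite and routine computation.

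The main obstacle is precisely the design of these separating relations, because the six operations are tightly interlocked: residuation links $\to$ and $\cdot$, the lattice absorption laws link $\wedge$ and $\vee$, and $\cdot$ distributes over $\vee$. A candidate relation must therefore respect all of these identities while being violated by exactly one operation, and it must additionally be compatible with $0$ (forcing, e.g., the pair $(0,0)$ into any binary separating relation) and with $B$. For $\cdot$ this is the most delicate point: since in a residuated lattice $\cdot$ is semantically determined by $\to$ and the order, one cannot separate it by any naive syntactic argument, and the witnessing relation (or non-surjective reduct map) has to exploit the gap between \emph{semantic determination} and \emph{term-definability}. Producing, for each of $\wedge,\vee,\cdot,\to$, a single algebra-plus-relation pair that threads all these constraints is where the real work lies; once such pairs are exhibited, independence follows at once from the clone-theoretic criterion stated at the outset.
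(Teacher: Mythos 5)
Your overall strategy is exactly the paper's: exhibit, for each operation, a finite algebra together with a subset closed under the other five operations but not under the target one (the subuniverse/compatible-relation criterion), and your two completed cases ($0$ via the singleton $\{1\}$, and $B$ via ${\bf G}_2 \times {\bf G}_3$) coincide with the paper's. However, there is a genuine gap: for the four binary connectives $\wedge, \vee, \cdot, \to$ you never produce the witnessing algebras and subsets. You only describe what such witnesses would have to satisfy and explicitly defer "the real work" of constructing them. Since these four cases are the substantive content of the proposition, the proposal as it stands is a proof plan, not a proof.

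Moreover, your assessment that these cases are delicate — in particular the worry that $\cdot$ cannot be separated from $\to$ because residuation "semantically determines" $\cdot$ — is misplaced, and the paper's constructions show why. Semantic determination is a global property of the whole algebra; it does not force a subset closed under $\{\wedge,\vee,\to,0,B\}$ to be closed under $\cdot$. Concretely, the paper takes the four-element chain $0 < a < b < 1$ with $a \cdot b = a \cdot a = b \cdot b = a$: the set $S = \{0, b, 1\}$ is closed under $\wedge, \vee$ (it is a subchain), under $\to$ (e.g.\ $b \to 0 = 0$, $1 \to b = b$), under $0$ and under $B$ (here $Bb = 0$ since $b \vee \neg b = b \neq 1$), yet $b \cdot b = a \notin S$. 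The other three cases are handled by equally small concrete examples: for $\wedge$, the five-element lattice of Figure \ref{five-godel} with the monoid operation redefined so that $s \cdot t = 0$, taking $S = \{0,s,t,1\}$ (then $s \wedge t = r \notin S$); for $\vee$, the order-dual of Example \ref{5}, taking $S$ to consist of bottom, the two atoms, and top; for $\to$, the G\"odel algebra of Example \ref{5} itself, taking $S = \{0,r,s,1\}$ (then $s \to r = t \notin S$). None of these requires the clone-theoretic machinery of endomorphisms or nontrivial compatible binary relations you anticipate; plain subuniverses of reducts suffice, and until such witnesses are written down and their closure properties checked, your argument does not establish the statement.
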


\begin{proof2}
To see that $\wedge$ is independent of the rest take the distributive lattice in Figure \ref{five-godel} 
and define the monoidal operation $\cdot$ as $s \cdot t = 0$, $s \cdot s = s$, and $t \cdot t = t$, for coatoms $s$ and $t$. 
This operation has a corresponding residuum $\to$. 
Since the only Boolean elements are 0 and 1, the operator $B$ is defined as $B1 = 1$ and $Ba = 0$, for all $a \neq 1$. 
Then, note that the set $S = \{0, s, t, 1 \}$ is closed for $\vee$, $\cdot$, $\to$, $0$ and $B$, but $s \wedge t \notin S$.

\smallskip

\noindent To see that $\vee$ is independent of the rest take 
the algebra that results from inverting the lattice order in the algebra of Example \ref{5} and 
note that the set $S$ with bottom, both atoms $a_{1}$ and $a_{2}$, and top is closed for $\wedge$, $\cdot$, $\to$, $0$, and $B$, 
but $a_{1} \vee a_{2} \notin S$.

\smallskip

\noindent To see that $\cdot$ is independent of the rest take the four-element chain $0 < a < b < 1$ where 
$a \cdot b = a \cdot a = b \cdot b = a$, for the atom $a$ and the coatom $b$, and 
note that the set $S =\{0, b, 1\}$ is closed for $\wedge$, $\vee$, $\to$, $0$, and $B$, 
but $b \cdot b \notin S$.

\smallskip

\noindent To see that $\to$ is independent of the rest take the algebra of Example \ref{5} and 
note that the set $S = \{0, r, s, 1 \}$ is closed for 
$\wedge$, $\vee$, $\neg$, $0$, and $B$, but $s \to r \notin S$. 

\smallskip

\noindent To see that $0$ is independent of the rest take the two element Boolean algebra and 
note that the set $S = \{ 1 \}$ is closed for $\wedge$, $\vee$, $\cdot$, $\to$, and $B$, but $0 \notin S$.  
 
\smallskip

\noindent The independence of $B$ has already been considered.
\end{proof2}

\begin{lem} \label{Blem} Let ${\bf A} \in \mathbb{RL}^B$ and let $a \in A$. Then, 

 \emph{(i)} $Ba = a$ iff $a$ is Boolean, 
 
 \emph{(ii)} $Ba = 1$ iff $a = 1$, 
 
 \emph{(iii)} $BBa = Ba$.
\end{lem}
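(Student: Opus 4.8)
The plan is to prove each of the three parts directly from the three characterizing conditions \textbf{(BE1)}, \textbf{(BE2)}, \textbf{(BI)}, together with the fact (Lemma \ref{BL}) that an element $c$ is Boolean iff $c \vee \neg c = 1$.

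For part (i), the forward direction is immediate: if $Ba = a$, then by \textbf{(BE2)} we have $a \vee \neg a = Ba \vee \neg Ba = 1$, so $a$ is Boolean by Lemma \ref{BL}. For the converse, suppose $a$ is Boolean. Then $a \leq a$ and $a \vee \neg a = 1$, so applying \textbf{(BI)} with $b = a$ gives $a \leq Ba$; combined with \textbf{(BE1)}, which gives $Ba \leq a$, antisymmetry yields $Ba = a$.

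For part (ii), the backward direction is the special case of part (i) applied to $a = 1$, since $1$ is trivially Boolean (its complement is $0$), so $B1 = 1$. For the forward direction, suppose $Ba = 1$. By \textbf{(BE1)} we have $Ba \leq a$, hence $1 \leq a$, and since $1$ is the top element, $a = 1$. Part (iii) then follows quickly: by \textbf{(BE2)}, $Ba$ is Boolean, so applying part (i) to the element $Ba$ (taking the role of ``$a$'') gives $B(Ba) = Ba$, that is $BBa = Ba$.

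I expect no real obstacle here; all three statements are essentially bookkeeping consequences of the defining conditions. The only point requiring a moment of care is being sure that in part (iii) I am entitled to apply part (i) to $Ba$ rather than to $a$ — this is legitimate precisely because \textbf{(BE2)} certifies that $Ba$ is a Boolean element, and part (i) holds for \emph{every} element of $A$. The proof is therefore a short chain of applications of the characterizing conditions and Lemma \ref{BL}, with antisymmetry of $\leq$ doing the work of turning the two inequalities into equalities.
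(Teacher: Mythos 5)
Your proof is correct and follows essentially the same route as the paper's: both rest on combining \textbf{(BE1)}, \textbf{(BE2)}, and \textbf{(BI)} with the characterization of Boolean elements, the only cosmetic difference being that you derive the backward direction of (ii) and all of (iii) as instances of part (i), whereas the paper applies \textbf{(BI)} and \textbf{(BE1)} directly in each case — unwinding your applications of (i) yields exactly those steps. Your explicit care in (iii) that part (i) may be applied to $Ba$ because \textbf{(BE2)} certifies it is Boolean is precisely the point the paper compresses into ``follows using \textbf{(BI)} and \textbf{(BE2)}.''
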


\begin{proof2}

(i) Suppose $Ba = a$. Using {\bf{(BE2)}} it follows that $a \vee \neg a = 1$. 
For the other conditional, suppose $a \vee \neg a = 1$. 
Then, as $a \leq a$, using {\bf{(BI)}} it follows that $a \leq Ba$. 
The other inequality follows by {\bf{(BE2)}}. 

\smallskip

(ii) Suppose $Ba = 1$. Using {\bf{(BE1)}}, it follows that $1 \leq a$, i.e. $a = 1$. 
For the other conditional, suppose $a=1$. Then, $a \leq 1$. 
Using {\bf{(BI)}} and the fact that $1$ is Boolean (see Proposition \ref{infmon2}(vi)), it follows that $1 \leq Ba$.

\smallskip

(iii) Considering {\bf{(BE1)}}, it is enough to see that $Ba \leq BBa$, which follows using {\bf{(BI)}} and {\bf{(BE2)}}. 
\end{proof2}

We also have the following properties.

\begin{lem} \label{LB}
Let ${\bf A} \in \mathbb{RL}^B$ and let $a, b \in A$. Then, 

\vspace{5pt}

\begin{tabular}{ll}

\emph{(i)} & $B$-Monotonicity: if $a \leq b$, then $Ba \leq Bb$, \\ 

\emph{(ii)} & $B(a \wedge b)=Ba \wedge Bb$, \\ 

\emph{(iii)} & $B(a \wedge b) \leq a \cdot b$, \\ 

\emph{(iv)} & $B(a \cdot b)=B(a \wedge b)$, \\ 

\emph{(v)} & $B(a \cdot b)=Ba \cdot Bb$, \\ 

\emph{(vi)} & $Ba \vee Bb\leq B(a \vee b)$, \\ 

\emph{(vii)} & $B(a \to b) \leq Ba \to Bb$, \\ 

\emph{(viii)} & $B0=0$, \\ 

\emph{(ix)} & $B\neg a \leq \neg Ba$. \\

\end{tabular}
\end{lem}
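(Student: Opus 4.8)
The plan is to establish the nine items in the stated order, since each one but the first either follows by the same template or feeds off an earlier item. The template throughout is that every element of the form $Bx$ is Boolean by {\bf (BE2)}, so the universal property {\bf (BI)} lets me promote any inequality ``Boolean element $\leq x$'' to ``Boolean element $\leq Bx$''. For (i), $Ba$ is Boolean with $Ba \leq a \leq b$, so {\bf (BI)} applied at $Bb$ gives $Ba \leq Bb$ at once. For (ii), monotonicity (i) applied to $a \wedge b \leq a$ and $a \wedge b \leq b$ yields $B(a \wedge b) \leq Ba \wedge Bb$; conversely $Ba \wedge Bb$ is Boolean (Proposition \ref{infmon2}(iii)) and sits below $a \wedge b$, so {\bf (BI)} gives the reverse inequality.

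Items (iii)--(vi) then cascade. For (iii) I rewrite $B(a \wedge b) = Ba \wedge Bb = Ba \cdot Bb$ using (ii) and the Boolean identity $x \wedge y = x \cdot y$ (Lemma \ref{inflem}(i)), and bound $Ba \cdot Bb \leq a \cdot b$ by monotonicity of $\cdot$. For (iv), integrality gives $a \cdot b \leq a \wedge b$, so $B(a \cdot b) \leq B(a \wedge b)$ by (i), while (iii) shows the Boolean element $B(a \wedge b)$ lies below $a \cdot b$, whence {\bf (BI)} supplies the reverse inequality. Then (v) is just (iv) composed with (ii) and the Boolean identity again, and (vi) follows by applying monotonicity (i) to $a, b \leq a \vee b$.

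The one item needing genuine care is (vii). Here I argue by residuation, reducing the goal to $B(a \to b) \cdot Ba \leq Bb$. Since $B(a \to b) \leq a \to b$ and $Ba \leq a$, monotonicity together with residuation gives $B(a \to b) \cdot Ba \leq (a \to b) \cdot a \leq b$. The decisive observation is that $B(a \to b) \cdot Ba$ is a product of two Boolean elements and hence itself Boolean (Proposition \ref{infmon2}(iii)); being Boolean and below $b$, it is therefore below $Bb$ by {\bf (BI)}, and residuation returns $B(a \to b) \leq Ba \to Bb$.

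Finally, (viii) is immediate from $0$ being Boolean (Proposition \ref{infmon2}(v)) together with Lemma \ref{Blem}(i), and (ix) follows by residuation once I note $B\neg a \cdot Ba \leq \neg a \cdot a = 0$ by monotonicity, which gives $B\neg a \leq \neg Ba$. I expect (vii) to be the main obstacle: the naive bound only places $B(a \to b) \cdot Ba$ below $b$ rather than below $Bb$, and the essential trick is to notice that the left-hand side is Boolean, so that {\bf (BI)} upgrades the inequality to $Bb$.
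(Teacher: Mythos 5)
Your proof is correct, and for items (i)--(vi) and (viii) it coincides with the paper's own argument essentially step by step: monotonicity from \textbf{(BE1)}, \textbf{(BE2)}, \textbf{(BI)}; the reverse inequality in (ii) from closure of Boolean elements under meet; the rewriting $B(a\wedge b)=Ba\wedge Bb=Ba\cdot Bb$ for (iii); the \textbf{(BI)}-upgrade of (iii) for (iv); and (v), (vi) exactly as in the paper. The only divergence is in (vii) and (ix). For (vii), the paper applies $B$-monotonicity to $(a\to b)\cdot a\leq b$ to obtain $B((a\to b)\cdot a)\leq Bb$ and then invokes the multiplicativity identity (v) to rewrite the left-hand side as $B(a\to b)\cdot Ba$; you instead bound $B(a\to b)\cdot Ba\leq b$ directly and upgrade this to $\leq Bb$ by observing that the product of two Boolean elements is Boolean (Proposition \ref{infmon2}(iii)) and applying \textbf{(BI)}. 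Your route has the small advantage of being independent of items (iii)--(v), whereas the paper's reuses the structure already established; both are equally rigorous. Likewise for (ix): the paper derives it from (vii) and (viii) via $\neg a=a\to 0$, while you give a self-contained residuation argument from $B\neg a\cdot Ba\leq \neg a\cdot a=0$. These are interchangeable one-line arguments, so in substance the two proofs are equivalent.
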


\begin{proof2}
(i) Suppose $a \leq b$. Using {\bf{(BI)}}, it is enough to have $Ba \leq b$ and $Ba \vee \neg Ba = 1$. 
Now, the former follows by {\bf{(BE1)}} and the hypothesis, and the latter is {\bf{(BE2)}}. 

\smallskip

(ii) $B(a \wedge b) \leq Ba \wedge Bb$ follows from $a \wedge b \leq a, b$ using $B$-monotonicity. 
The other inequality follows using {\bf{(BI)}}, {\bf{(BE1)}}, and (iii) in Proposition \ref{infmon2}.

\smallskip

(iii) By (i) in Proposition \ref{infmon} and part (ii) we have $B(a \wedge b) = Ba \wedge Bb = Ba \cdot Bb$. 
The goal follows using $Ba \leq a$, $Bb \leq b$, and monotonicity of $\cdot$.   

\smallskip

(iv) From $a \cdot b \leq a \wedge b$ by (i), we get $B(a \cdot b) \leq B(a \wedge b)$. 
For the other inequality, using {\bf{(BI)}}, it is enough to have $B(a \wedge b) \leq a \cdot b$ and $B(a \wedge b)$ Boolean. 
Now, the former is (iii) and the latter follows from {\bf{(BE2)}}.

\smallskip

(v) As $Ba$ is Boolean, by (i) of Proposition \ref{infmon}, we have $Ba \wedge Bb=Ba \cdot Bb$. 
Moreover, by (ii), $B(a \wedge b)=Ba \wedge Bb$. We get our goal using (iv).

\smallskip

(vi) It follows using (i) ($B$-Monotonicity).

\smallskip

(vii) As $a \to b \leq a \to b$, we have $(a \to b) \cdot a \leq b$. 
Then, by $B$-monotonicity,  $B((a \to b) \cdot a) \leq Bb$. 
So, using (v), $B(a \to b) \cdot Ba \leq Bb$. 
So, $B(a \to b) \leq Ba \to Bb$.

\smallskip

(viii) It follows because $0$ is Boolean.

\smallskip

(ix) It follows from (vii), (viii), and $\neg a = a \to 0$.
\end{proof2}

Regarding the inequalities in the previous lemma, that is, $(iii), (vi), (vii)$, and $(ix)$, their reciprocals do not hold. 
Indeed, inequality $a \cdot b \leq B(a \wedge b)$ fails in the three-element G\"{o}del algebra ${\bf G}_3$ taking the top and the middle element. 
Inequality $B(a \vee b) \leq Ba \vee Bb$ fails in the algebra of Example \ref{5} taking $a$ and $b$ to be the coatoms $s$ and $t$. 
Also, inequality $\neg Ba \leq B\neg a$ fails in ${\bf G}_3$, taking $a$ to be the middle element. 
So, also inequality $Ba \to Bb \leq B(a \to b)$ fails.  

\medskip

Though $B$ may not exist for every element in a residuated lattice,  
$B$ exists in every finite residuated lattice. 

\begin{pro} \label{finB}
Let $\bf{A} \in \mathbb{RL}$ be finite. Then, $B$ exists in $A$. 
\end{pro}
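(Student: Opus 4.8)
The plan is to exhibit $Ba$ explicitly as the join of all Boolean elements lying below $a$, and then verify that this join is itself Boolean and below $a$, so that it is the required maximum. Fix $a \in A$ and set $S_a = \{b \in A : b \leq a \text{ and } b \text{ is Boolean}\}$. First I would observe that $S_a$ is nonempty, since $0$ is Boolean by Proposition \ref{infmon2}(v) and $0 \leq a$. Because $\mathbf{A}$ is finite, $S_a$ is a finite subset of $A$, so its join $c = \bigvee S_a$ exists in the lattice.

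Next I would check the two defining properties of the maximum. That $c \leq a$ is immediate: every element of $S_a$ is below $a$, so $a$ is an upper bound of $S_a$ and hence $c = \bigvee S_a \leq a$. The key remaining point is that $c$ is Boolean, and this follows from the fact that Boolean elements are closed under finite joins. By Proposition \ref{infmon2}(ii) the join of two Boolean elements is Boolean, and by Proposition \ref{infmon2}(v) the element $0$ is Boolean, which supplies the base case. An easy induction on the (finite) cardinality of $S_a$ then shows that $c = \bigvee S_a$ is Boolean.

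Putting these together, $c$ is a Boolean element with $c \leq a$, so $c \in S_a$; being simultaneously an upper bound of $S_a$, it is the greatest element of $S_a$. Hence $\max S_a$ exists and equals $c$, which is precisely what it means for $B$ to be defined at $a$. Since $a$ was arbitrary, $B$ is defined everywhere on $A$, and one readily checks that the resulting operation satisfies \textbf{(BE1)}, \textbf{(BE2)}, and \textbf{(BI)}.

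I expect no serious obstacle here: the only mildly nontrivial ingredient is closure of the center under finite joins, and this is already packaged in Proposition \ref{infmon2}. The finiteness hypothesis is used exactly once, to guarantee that the relevant join exists in the lattice; without it the supremum of $S_a$ might fail to exist or fail to be attained, which is why $B$ need not be defined in a general (infinite) residuated lattice.
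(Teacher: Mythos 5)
Your proof is correct and is essentially the same as the paper's: both define $Ba$ as the join of the (finite) set of Boolean elements below $a$, reduce everything to closure of that set under binary joins via Proposition \ref{infmon2}(ii), and conclude that this join is itself a Boolean element below $a$, hence the maximum. The only cosmetic differences are that you spell out the nonemptiness via $0$ and the induction on cardinality, which the paper leaves implicit.
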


\begin{proof2}
In a finite residuated lattice {\bf{A}}, for any $a \in A$, 
we have $Ba = \bigvee \{ b \in A: b \leq a$ and $b \vee \neg b = 1 \}$. 
It is enough to see that if $b_{1} \leq a$, $b_{1} \vee \neg b_{1} = 1$, $b_{2} \leq a$, and $b_{2} \vee \neg b_{2} = 1$, 
then (i) $b_{1} \vee b_{2} \leq a$ and (ii) $(b_{1} \vee b_{2}) \vee \neg (b_{1} \vee b_{2}) = 1$. 
Now, (i) follows immediately and (ii) follows using (ii) in Proposition \ref{infmon2}. 
\end{proof2}

On the other hand, there are infinite residuated lattices where $B$ does not exist. 
Indeed, we have the following example due to Franco Montagna (see \cite{AEM}).

\begin{pro}\label{F}
There is an (infinite) G\"{o}del algebra {\bf{A}} and $a \in A$ such that $Ba$ does not exist, i.e. where $B$ does not exist. 
\end{pro}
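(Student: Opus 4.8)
The plan is to exhibit an explicit infinite Gödel algebra together with a distinguished element for which the supremum defining $Ba$ simply fails to be attained. The natural strategy is to build a Gödel chain (or a small modification of one) in which there is an element $a$ strictly below $1$ and an infinite increasing sequence of Boolean elements below $a$ whose least upper bound is $a$ itself, yet $a$ is not Boolean; then no single Boolean element can serve as $\max\{b \le a : b \text{ is Boolean}\}$. Recall from the earlier discussion that in any chain the only Boolean elements are $0$ and $1$, so a single chain cannot produce infinitely many Booleans below $a$; hence the construction must be a subalgebra of a product (or an ordinal-indexed horizontal/vertical sum) that glues together infinitely many ``two-element Boolean blocks'' beneath a common ceiling.

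Concretely, I would take a countable family of two-element Boolean algebras $\{0,1\}$ indexed by $n \in \mathbb{N}$ and form a suitable subalgebra $\mathbf{A}$ of the product $\prod_{n} {\bf G}_2$ (or of a product involving one extra Gödel factor) so that: the elements $e_n$ with a single coordinate switched off form a strictly increasing chain of complemented elements, each $e_n \le a$ for a fixed non-Boolean element $a$, and $\bigvee_n e_n = a$ while $a \wedge \neg a \ne 0$. The key verifications would then be (i) that $\mathbf{A}$ is genuinely a Gödel algebra, i.e.\ a residuated lattice with $\cdot = \wedge$ idempotent and satisfying prelinearity $(x \to y) \vee (y \to x) = 1$, which I would obtain by presenting $\mathbf{A}$ explicitly as closed under the componentwise Gödel operations; and (ii) that $\{b \le a : b \vee \neg b = 1\}$ has no maximum. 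For (ii), using Proposition \ref{infmon2}(ii) (closure of Booleans under $\vee$), any candidate maximum $Ba$ would have to dominate every $e_n$, hence equal $a$ by the construction, but $a$ is not Boolean by design, contradicting {\bf (BE2)}.

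The main obstacle I anticipate is arranging the construction so that $a$ is the supremum of infinitely many Booleans \emph{without} $a$ itself becoming Boolean, while keeping the whole structure inside a genuine Gödel algebra and keeping $\mathbf{A}$ closed under the operations (in particular under $\to$ and $\neg$). In a plain product $\prod_n {\bf G}_2$ the supremum of the $e_n$ would be the top element, which \emph{is} Boolean, so the delicate point is to place the ``ceiling'' $a$ strictly below the true top and to certify that $a$ fails $a \vee \neg a = 1$. This is exactly where a Gödel (non-Boolean) component must intervene: I would let $a$ have a coordinate lying in a Gödel chain longer than two elements, so that $\neg a$ collapses and $a \vee \neg a \ne 1$, while still allowing infinitely many honestly-Boolean elements to approximate $a$ from below in the other coordinates.

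Finally, I would double-check that the attribution to Montagna and the reference \cite{AEM} match the construction, and present the algebra as concretely as possible (specifying the underlying lattice, the element $a$, and the sequence of witnessing Booleans) rather than relying on an abstract existence argument, since an explicit counterexample is both shorter to verify and more illuminating. The verification that $B$ fails only at this particular $a$ (everything else being routine) keeps the proof short: once the lattice and operations are fixed, parts (i) and (ii) above are short order-theoretic checks appealing to Lemma \ref{BL} and Proposition \ref{infmon2}.
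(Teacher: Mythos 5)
Your strategy is the right one, and it is in fact the strategy of the paper's own proof (Montagna's example): since a chain has only the two trivial Boolean elements, one builds a non-linear G\"odel algebra out of infinitely many Boolean ``blocks'' lying under a non-Boolean ceiling $a$, and the entire difficulty is that in any \emph{full} product the join of the witnessing Booleans exists and is itself Boolean, so $Ba$ does exist there; one must therefore pass to a proper subalgebra that omits that join while staying closed under $\to$ and $\neg$. The genuine gap is that your proposal stops exactly at this point: the ``suitable subalgebra'' $\mathbf{A}$ is never defined, and its closure under the operations --- which you yourself flag as the main obstacle --- is never verified. Every verification you list (that $\mathbf{A}$ is a G\"odel algebra, that $\bigvee_n e_n = a$ holds \emph{inside} $\mathbf{A}$, that the Booleans below $a$ have no maximum) depends on which subalgebra is taken, so none of them can be carried out from what is written; as it stands this is a plan for a proof, not a proof.

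For comparison, here is how the paper resolves the crux. Inside $(\mathbf{G}_3)^{\mathbb{N}}$, the $\mathbb{N}$-fold power of the three-element G\"odel chain $\{0, \frac{1}{2}, 1\}$, take $A = A_1 \cup A_2$, where $A_1$ consists of the sequences with finitely many zero coordinates and $A_2$ of those with finitely many nonzero coordinates. A short case analysis (e.g.\ if $x \in A_1$ and $y \in A_2$ then $x \to y \in A_2$; if $x, y \in A_2$ then $x \to y \in A_1$) shows $A$ is closed under the coordinatewise operations, so it is an infinite G\"odel algebra. Taking $a$ with $a_i = 1$ for even $i$ and $a_i = \frac{1}{2}$ for odd $i$, the Boolean elements of $A$ below $a$ are exactly the $0$--$1$ sequences vanishing on all odd coordinates and on all but finitely many even ones; each such element can be strictly enlarged within this set (switch one more even coordinate to $1$), so the set has no maximum, and the would-be maximum ($1$ on the evens, $0$ on the odds) is excluded from $A$ precisely because it has infinitely many zeros and infinitely many nonzeros. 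Note that this argument also sidesteps the extra obligation your version incurs: you must prove $\bigvee_n e_n = a$ \emph{in} $\mathbf{A}$ (suprema in a subalgebra need not agree with suprema in the product), whereas the paper only needs that every Boolean below $a$ admits a strictly larger one. Your variant (countably many $\mathbf{G}_2$ factors plus one $\mathbf{G}_3$ factor) can indeed be completed along the same lines --- for instance by taking the subalgebra generated by $a$ and the $e_n$ and checking that no element of it can be $1$ on cofinitely many $\mathbf{G}_2$ coordinates while being $0$ in the $\mathbf{G}_3$ coordinate --- but that construction and its closure properties are exactly what a complete proof must exhibit.
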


\begin{proof2}
Let $[0, \frac{1}{2}, 1]_{G}$ be the three-element G\"{o}del algebra. 
Let us consider

\begin{itemize}
\item[]$A_{1} = \{ a \in ([0, \frac{1}{2}, 1]_{G})^{\mathbb{N}}$ such that $\{ i \in {\mathbb{N}}: a_{i}=0 \}$ is finite$\}$,
\item[]$A_{2} = \{ a \in ([0, \frac{1}{2}, 1]_{G})^{\mathbb{N}}$ such that $\{ i \in {\mathbb{N}}: a_{i} \neq 0 \}$ is finite$\}$, and  
\item[]$A = A_{1} \cup A_{2}$.
\end{itemize}

\noindent The set $A$ is the domain of a subalgebra of $([0, \frac{1}{2}, 1]_{G})^{\mathbb{N} }$. 
Indeed, if $a,b \in A_1$, then $a\wedge b \in A_1$ and $a\to b \in A_1$, 
if $a,b \in A_2$, then $a \wedge b \in A_2$ and $a \to b \in A_1$, 
if $a \in A_1$ and $b \in A_2$, then $a \wedge b\in A_2$ and $a \to b \in A_2$, and 
if $a \in A_2$ and $b \in A_1$, then $a \wedge b \in A_2$ and $a \to b \in A_1$.
Also, $0 \in A_2$.
So, $A$ is the domain of a subalgebra $\bf{A}$ of $([0, \frac{1}{2}, 1]_{G})^{\mathbb{N}}$.

Now, take $a$ to be such that $a_{i}=1$ if $i$ is even and $a_{i}=\frac{1}{2}$ if $i$ is odd.
Next, consider the set $\{b \in A: b \leq a$ and $b$ is Boolean$\}$. 
It consists of all elements $b$ such that $b_{i}=0$ for all odd $i$ and for all but finitely many even $i$, 
and $b_{i}=1$ otherwise. 
It can be seen that this set has no maximum in $A$.
\end{proof2}

Actually, Montagna's example of Proposition \ref{F} can be generalized as follows. 

\begin{pro}\label{Montana-ext}

Let $\mathbb{V}$ be a variety of {\rm MTL}-algebras such that 
there is a linearly ordered algebra ${\bf A }\in \mathbb{V}$ with a proper filter $F$ 
(i.e. $\{ 1\} \subsetneq F \subsetneq A$), that is, such that $\bf A$ is not simple. 
Then, $\mathbb{V}$ contains an infinite algebra where $B$ does not exist. 

\end{pro}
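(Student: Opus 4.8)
The plan is to mimic Montagna's construction from Proposition \ref{F}, but to read off the two pieces $A_1,A_2$ from the \emph{congruence} determined by $F$ rather than from the set-complement of $F$; this single change is what makes the argument survive for an arbitrary proper filter. Let $\theta_F$ be the congruence of $\mathbf A$ determined by $F$, write $F=[1]_{\theta_F}$ for the top class and $I=[0]_{\theta_F}$ for the bottom class. Since $\{1\}\subsetneq F\subsetneq A$, the quotient chain $\mathbf A/\theta_F$ has at least two elements, so $I\cap F=\emptyset$, $0\in I$, $1\in F$, $1\notin I$, and there is some $f\in F$ with $f<1$. Working inside the product $\mathbf A^{\mathbb N}\in\mathbb V$, I set $A_1=\{x\in A^{\mathbb N}: x_i\in F \text{ for all but finitely many } i\}$ and $A_2=\{x\in A^{\mathbb N}: x_i\in I \text{ for all but finitely many } i\}$, and $A_*=A_1\cup A_2$.

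First I would check that $A_*$ is the universe of a subalgebra $\mathbf A_*$ of $\mathbf A^{\mathbb N}$, hence a member of $\mathbb V$. The clauses $0\in A_2$ and $1\in A_1$ are immediate. For the binary operations I would use that $F$ and $I$ are $\theta_F$-classes and that $[0],[1]$ span a copy of the two-element Boolean algebra $\mathbf 2$ inside $\mathbf A/\theta_F$: for $x,y\in F\cup I$ the class of $x_i\circ y_i$ is the corresponding $\mathbf 2$-operation applied to the classes of $x_i,y_i$. Reading off $\mathbf 2$ this gives, coordinatewise, $F\circ F\subseteq F$ and $I\circ I\subseteq I$ for $\circ\in\{\wedge,\vee,\cdot\}$, the expected mixed cases ($\vee$ sends a mixed pair to $F$, while $\wedge,\cdot$ send it to $I$), and —crucially— $x\to y\in F$ whenever $x,y\in I$, since $[0]\to[0]=[1]$, i.e. $0\to 0=1$. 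Feeding this table through the finite/cofinite bookkeeping shows every operation maps $A_1,A_2$ back into $A_1\cup A_2$; the only delicate entry is $A_2\to A_2$, which lands in $A_1$ exactly because of the identity $0\to 0=1$.

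Next I would exhibit the bad element. Let $a\in A^{\mathbb N}$ be given by $a_i=1$ for $i$ even and $a_i=f$ for $i$ odd; then $a\in A_1$. Since $\mathbf A_*$ is a subalgebra, an element $b$ is Boolean in $\mathbf A_*$ iff $b\vee\neg b=1$ (Lemma \ref{BL}), which is decided coordinatewise, so $b$ is Boolean iff each $b_i$ is Boolean in the chain $\mathbf A$, i.e. iff $b_i\in\{0,1\}$. For such a $b\le a$, the odd coordinates force $b_i=0$ (because $b_i\le f<1$), while the even coordinates are free; membership in $A_*$ then forces all but finitely many $b_i$ to lie in $I$, i.e. only finitely many even coordinates may equal $1$. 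Hence the Booleans below $a$ in $\mathbf A_*$ are precisely the finitely supported $\{0,1\}$-sequences vanishing on the odd coordinates, and these have no maximum in $A_*$: their only candidate supremum is the sequence that is $1$ on all even and $0$ on all odd coordinates, which lies in neither $A_1$ nor $A_2$. Thus $Ba$ does not exist in $\mathbf A_*$, and as $\mathbf A_*$ is an infinite algebra of $\mathbb V$ this establishes the claim.

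The main obstacle is exactly this closure under $\to$. The literal transcription of Proposition \ref{F}, taking $A_2=\{x: x_i\notin F \text{ cofinitely}\}$, is \emph{not} closed once $A\setminus F$ splits into more than one $\theta_F$-class: one can then choose $x,y$ whose coordinatewise implications lie in $F$ on an infinite set and outside $F$ on another infinite set, so $x\to y$ escapes both $A_1$ and $A_2$. Basing $A_2$ on the bottom class $I$ rather than on the complement of $F$ is precisely the repair, and it collapses to Montagna's original definition in the two-class case. The two remaining points to keep honest are that linearity of $\mathbf A$ is what confines the Boolean coordinates to $\{0,1\}$, and that $f<1$ is what makes the odd coordinates genuinely annihilate the candidate maxima.
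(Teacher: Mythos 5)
Your proof is correct, and at its core it is the same construction as the paper's: a two-piece, finite/cofinite subalgebra of a countable power, with the bad element alternating $1$ and a non-unit $f\in F$, killed by Montagna's no-maximum argument. The only substantive difference is how the second piece is described and how closure under the operations is justified. The paper takes $F^\neg=\{x\in A\mid \exists y\in F,\ x\le\neg y\}$ and works inside $C^{\mathbb N}$ for $C=F\cup F^\neg$; but $F^\neg$ is \emph{exactly} your congruence class $I=[0]_{\theta_F}$, since $x\le\neg y$ for some $y\in F$ iff $\neg x\in F$ (one direction from $\neg x\ge\neg\neg y\ge y$ and upward closure of $F$, the other from $x\le\neg\neg x$ with $y=\neg x$). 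So the two decompositions coincide extensionally. What your congruence viewpoint buys is that the whole operation table comes for free: $\{[0],[1]\}$ is a subalgebra of ${\bf A}/\theta_F$ isomorphic to the two-element Boolean algebra, so in particular $I\to I\subseteq F$ is automatic, whereas the paper verifies the corresponding facts about $F$ and $F^\neg$ by direct computation (e.g. $x,y\in F^\neg$ implies $x\to y\in F$, which reduces to $x\to y\ge\neg x\in F$). A second, harmless divergence: you work in $A^{\mathbb N}$ and allow the finitely many exceptional coordinates to be arbitrary elements of $A$, while the paper confines all coordinates to $C$; both versions are closed under the operations, since only the cofinite conditions drive the bookkeeping. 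Finally, your closing diagnosis is accurate and worth keeping: the literal transcription of Proposition \ref{F}, with ``$a_i\notin F$ cofinitely'' defining the second piece, is not closed under $\to$ once $A\setminus F$ meets more than one $\theta_F$-class, and replacing $A\setminus F$ by $I$ (equivalently, by $F^\neg$) is precisely the repair both you and the paper implement.
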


\begin{proof2}
Let ${\bf D }\in \mathbb{V}$ be a chain and $F$ be a filter of $\bf A$ satisfying the hypothesis of the proposition. 
Let us define $F^\neg = \{ x \in D \mid \exists y \in F, x \leq \neg y \}$ and let $C = F \cup F^\neg$. 
It is easy to check that $C$ is the domain of a subalgebra of $\bf D$. 
Finally define the following sets: 

\begin{itemize}
\item[]$A_{1} = \{ a \in C^{\mathbb{N}}$ such that $\{ i \in {\mathbb{N}}: a_{i} \in F \}$ is finite$\}$,
\item[]$A_{2} = \{ a \in C^{\mathbb{N}}$ such that $\{ i \in {\mathbb{N}}: a_{i} \in F^\neg \}$ is finite$\}$, 
\item[]$A = A_{1} \cup A_{2}$.
\end{itemize}
One can check that again $A$ is the domain of  a subalgebra of ${\bf C^N}$, 
taking into account that if $x \in F$ and $y \in F^\neg$, then $x \land y, x * y, x \to y \in F^\neg$, 
and if  $x, y \in F^\neg$, then  $x \to y \in F$. 

Thus, $A$ is a subalgebra and taking an element $a$ such that $a_i = 1$ if $i$ is even and $a_i = b$, for a  given $b \in F \setminus \{1\}$, 
then the same argument as in Montagna's example proves that $Ba$ does not exist. 
\end{proof2}

For readers familiar with the main systems of mathematical fuzzy logic and their algebraic semantics (see \cite{Handbook}), 
we provide the following corollary with further examples of 
subvarieties of residuated lattices containing algebras where $B$ does not exist.

\begin{cor}
In  the following varieties of {\rm MTL}-algebras, there is an infinite algebra where $B$ does not exist:
\begin{itemize}
\item  the variety generated by any continuous t-norm,
\item the varieties generated by either the {\rm NM} t-norm or a {\rm WNM} t-norm. 
\footnote{Actually, this could be generalized in the following sense. 
In \cite{No07} Noguera proves that the variety generated by simple $n$-contractive MTL-chains is the variety of $S_n$-MTL algebras, 
i.e. MTL-algebras satisfying the law $x \vee \neg x^{n-1}= 1$. 
Therefore, any variety of $n$-contractive MTL-algebras that are not $S_n$-MTL has a chain with a proper filter. 
In particular, this is the case for the varieties of WNM and NM-algebras, since they are 3-contractives and are not $S_3$-MTL.}
\end{itemize}
\end{cor}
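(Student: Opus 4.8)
The plan is to reduce every case to Proposition \ref{Montana-ext}, which guarantees an infinite algebra where $B$ does not exist as soon as the variety $\mathbb{V}$ contains a \emph{non-simple} chain, i.e.\ a linearly ordered member with a proper filter $F$ satisfying $\{1\} \subsetneq F \subsetneq A$. So for each listed variety it suffices to exhibit one such chain, and my default tactic is to take the generating standard $[0,1]$-algebra and display an explicit proper filter, typically an up-set of the form $[e,1]$ with $e$ idempotent, or $(0,1]$. Recall that a filter is an up-set containing $1$ and closed under $\cdot$; for $e$ idempotent with $0 < e < 1$, monotonicity gives $x \cdot y \geq e \cdot e = e$ whenever $x,y \geq e$, so $[e,1]$ is automatically such a proper filter.

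For the variety generated by a continuous t-norm $*$ I would invoke the Mostert--Shields representation of $*$ as an ordinal sum of \L ukasiewicz, product and minimum components. If $*$ has an idempotent $e$ with $0 < e < 1$ -- which occurs whenever the ordinal sum has more than one summand and whenever a minimum component is present -- then $[e,1]$ is a proper filter of the standard chain. Otherwise $*$ is isomorphic to the product or the \L ukasiewicz t-norm; in the product case $(0,1]$ is a proper filter, since a product of strictly positive reals is strictly positive. Hence the standard chain is non-simple in every subcase except the \L ukasiewicz one.

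For the NM and WNM t-norms I would argue directly on the standard chain as well: in any such t-norm every element $a$ with $a > \neg a$ is idempotent (for the NM t-norm these are exactly the $a > 1/2$, where $a \cdot a = \min(a,a) = a$ since $a + a > 1$), and for such $a$ with $a < 1$ one checks that $\neg y \leq \neg a < a \leq x$ for all $x,y \geq a$, so $x \cdot y = \min(x,y) \geq a$ and $[a,1]$ is again a proper filter. Alternatively, as noted in the footnote, one may appeal to Noguera's theorem: NM- and WNM-algebras are $3$-contractive but fail the law $x \vee \neg x^{2} = 1$ defining $S_3$-MTL; since the variety generated by the simple $3$-contractive chains is precisely $S_3$-MTL, a $3$-contractive variety not contained in $S_3$-MTL cannot be generated by simple chains, and being semilinear it must contain a non-simple one.

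The one genuine obstacle is the \L ukasiewicz case, where the standard chain is simple and so supplies no proper filter, forcing the argument to step outside $[0,1]$. The variety generated by the \L ukasiewicz t-norm is the variety of MV-algebras, which contains non-Archimedean chains; taking Chang's algebra $C$, whose proper filter is the set of co-infinitesimals $\{1 - nc : n \geq 0\}$ (an up-set closed under $\cdot$, as $(1-mc)\cdot(1-nc) = 1-(m+n)c$, omitting $0$ and every $mc$), provides the required non-simple chain. With a witness in hand for each variety, Proposition \ref{Montana-ext} finishes the proof.
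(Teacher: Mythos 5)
Your proposal is correct and follows essentially the same route as the paper: both reduce every case to Proposition~\ref{Montana-ext} by exhibiting a non-simple chain in each variety, using up-sets of idempotents (equivalently, of positive elements with respect to $\neg$) on the standard chain for the ordinal-sum, product, G\"odel and (W)NM cases, and Chang's algebra with its filter of co-infinitesimals for the {\L}ukasiewicz case. The only differences are cosmetic: your case split via the Mostert--Shields representation is slightly more explicit than the paper's, and your filters $[e,1]$ and $(0,1]$ coincide with or refine the paper's choice of ``positive elements''.
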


\begin{proof2}
In all these varieties there is an algebra {\bf A} satisfying the conditions of Proposition \ref{Montana-ext}. 

If the t-norm is either a G\"odel, Product\blue{,} or a {\rm WNM} t-norm (including {\rm NM}), 
then take as ${\bf A }$ the standard chain and as $F$ the positive elements respect to $\neg$, i.e., 
the elements such that $\neg x \leq x$. 
If the t-norm is {\L}ukasiewicz, then take  ${\bf A }$ as the Chang algebra and $F$ as the set of its positive elements. 
Finally\blue{,} if the continuous t-norm is a proper ordinal sum, 
then take ${\bf A }$ as the standard chain and $F = [a,1]$, 
where $a \in(0,1)$ is the end point of a component. 
It is clear that in all cases $F$ is a proper filter and thus Proposition \ref{Montana-ext} applies. 
 
\end{proof2}

Some papers (e.g. \cite {CC}) consider the notion of {\em compatible operation}. 
Operation $B$ is not compatible, that is, the congruences of $\mathbb{RL}$ and $\mathbb{RL}^B$ are not the same. 
To see this, take the three-element Heyting or G\"odel algebra ${\bf G}_3$ with universe $\{0, \frac{1}{2}, 1\}$ 
and the equivalence relation given by $\theta=\{(0,0), (\frac{1}{2},\frac{1}{2}), (\frac{1}{2},1), (1, \frac{1}{2}), (1,1) \}$. 
It holds that $\theta$ is a $\mathbb{RL}$-congruence, but not an $\mathbb{RL}^B$-congruence, as $B\frac{1}{2} = 0$ and $B1 = 1$.

Taking \emph{modality} to mean a finite combination of the unary operators $\neg$ and $B$, 
the next statement shows how many different modalities there are in $\mathbb{RL}^B$. 

\begin{pro}
In $\mathbb{RL}^B$ there are nine different modalities. 
They may be ordered as follows: on the one hand, the positive modalities $B \leq Id \leq \neg \neg \leq \neg B \neg$ 
(with also $B \leq B \neg \neg \leq \neg \neg$) and, on the other hand, the negative modalities  
$B \neg \leq \neg \leq \neg B \neg \neg \leq \neg B$. See Figure \ref{pnm}.
\end{pro}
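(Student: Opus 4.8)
The plan is to prove the statement in two stages: first an \emph{upper bound}, showing that every modality (every finite word in $\neg$ and $B$) equals, as an operation on any ${\bf A}\in\mathbb{RL}^B$, one of the nine displayed ones; and then a \emph{lower bound}, exhibiting concrete algebras on which the nine take pairwise different values, so that no further collapses occur. The stated inequalities will then follow from a handful of the monotonicity facts already available.

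For the upper bound I would rely on three reduction facts. First, $\neg\neg\neg=\neg$: indeed $\neg a\le\neg\neg\neg a$ by Lemma \ref{GRL}(vii) applied to $\neg a$, while $a\le\neg\neg a$ (Lemma \ref{GRL}(vii)) together with Lemma \ref{GRL}(iii) gives $\neg\neg\neg a\le\neg a$. Second, $BB=B$ (Lemma \ref{Blem}(iii)). Third, and crucially, $Ba$ is always Boolean by {\bf(BE2)}, the Boolean elements are closed under $\neg$ and $B$ (Proposition \ref{infmon2}(i) and Lemma \ref{Blem}(i)), and on a Boolean element both $B$ and $\neg\neg$ act as the identity (Lemma \ref{Blem}(i) and Proposition \ref{infmon}(v)). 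Consider now an arbitrary word $w$. If $w$ contains no $B$, then $w=\neg^k$, which reduces to $Id$, $\neg$, or $\neg\neg$. If $w$ contains a $B$, write $w=w'\,B\,\neg^k$, where the exhibited $B$ is the rightmost occurrence of $B$ and $\neg^k$ collects the symbols to its right. Using $\neg\neg\neg=\neg$ we may take $k\in\{0,1,2\}$, and $B\neg^k a$ is Boolean for every $a$. Since $w'$ is then evaluated on a Boolean element and the Booleans are closed under both operations, inside $w'$ every $B$ may be deleted and every pair $\neg\neg$ cancelled, so $w'$ collapses to $Id$ or $\neg$. Hence $w$ lies in $\{B\neg^k,\ \neg B\neg^k : k\in\{0,1,2\}\}$, and together with the three $B$-free words this yields exactly the nine listed modalities, establishing that there are at most nine.

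The stated orderings are then immediate. For the positive chains: $B\le Id$ is {\bf(BE1)}; $Id\le\neg\neg$ is Lemma \ref{GRL}(vii); applying $\neg$ (antitone, Lemma \ref{GRL}(iii)) to $B\neg a\le\neg a$ gives $\neg\neg\le\neg B\neg$; $B$-monotonicity (Lemma \ref{LB}(i)) applied to $a\le\neg\neg a$ gives $B\le B\neg\neg$, while $B\neg\neg\le\neg\neg$ is {\bf(BE1)}. The negative chain $B\neg\le\neg\le\neg B\neg\neg\le\neg B$ follows symmetrically, applying $\neg$ to $B\neg\neg a\le\neg\neg a$ and to $Ba\le B\neg\neg a$ and using $\neg\neg\neg=\neg$.

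Finally, for distinctness I would evaluate the nine operations on the two finite chains ${\bf G}_3$ (the three-element G\"odel chain) and ${\bf L}_3$ (the three-element {\L}ukasiewicz chain); both are finite, so $B$ exists on them by Proposition \ref{finB}. On ${\bf G}_3$ (with $\neg$ sending $0\mapsto 1$ and $\frac{1}{2},1\mapsto 0$, and $B\frac{1}{2}=0$) the nine modalities already split into five value-patterns, the only coincidences being $\neg=B\neg=\neg B\neg\neg$ and $\neg\neg=B\neg\neg=\neg B\neg$. These are broken on ${\bf L}_3$, where $\neg$ is involutive with $\neg\frac{1}{2}=\frac{1}{2}$ and $B\frac{1}{2}=0$: there $\neg$ takes the value $\frac{1}{2}$ at $\frac{1}{2}$ whereas $B\neg$ and $\neg B\neg\neg$ do not and differ from each other, and likewise $\neg\neg=Id$, $B\neg\neg=B$, $\neg B\neg$ are pairwise separated. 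Tabulating the nine value-triples on ${\bf G}_3$ and on ${\bf L}_3$ and comparing the combined signatures shows all nine are distinct. The main obstacle is not any single computation but getting the reduction bookkeeping exactly right and choosing the separating algebras wisely: since in every G\"odel algebra $\neg a$ is already Boolean, so that $\neg$ and $B\neg$ coincide there, a non-idempotent algebra such as ${\bf L}_3$ is genuinely required to tell the two apart.
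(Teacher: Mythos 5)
Your proof is correct, but it is organized differently from the paper's at the two substantive points. For the bound of nine, the paper merely checks that the displayed set is closed under prefixing by $\neg$ and by $B$, citing the identities $\neg\neg B = B$, $BB = B$, and $B\neg B = \neg B$ (all consequences of the fact that $B$ takes Boolean values, on which $B$ and $\neg\neg$ act as the identity); your rightmost-$B$ normal-form reduction of an arbitrary word rests on exactly the same facts (Lemma \ref{Blem}, Proposition \ref{infmon}(v), Proposition \ref{infmon2}(i)) but packages them as a direct reduction, which is more self-contained and makes the count of nine transparent, at the cost of the bookkeeping the paper's terser closure check avoids. The inequalities, which the paper dismisses as ``immediate,'' you verify from {\bf{(BE1)}}, Lemma \ref{GRL}(iii),(vii), and $B$-monotonicity (Lemma \ref{LB}(i)); that is precisely what ``immediate'' means here. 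For distinctness, the paper also needs two witnesses and takes the atom of ${\bf G}_3$ together with one of the two incomparable elements of the Heyting algebra obtained by adding a top to the four-element Boolean algebra, whereas you pair ${\bf G}_3$ with the three-element {\L}ukasiewicz chain. Both combinations work: on ${\bf G}_3$ the only collapses are $\neg = B\neg = \neg B \neg\neg$ and $\neg\neg = B\neg\neg = \neg B \neg$, and either second algebra separates these classes pairwise, so your choice is a perfectly good substitute. One side remark of yours is inaccurate, however: it is not true that a non-idempotent algebra is ``genuinely required'' to split $\neg$ from $B\neg$. What is required is an algebra in which some $\neg a$ fails to be Boolean, equivalently one where $\neg x \vee \neg\neg x \approx 1$ fails; the paper's own second witness is idempotent (a Heyting algebra) but non-prelinear, and it does the job. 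Non-G\"odel, yes; non-idempotent, no.
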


\begin{proof2}
The inequalities are immediate. 
The reverse inequalities can be seen not to be the case by considering either the only atom in the three-element G\"{o}del algebra ${\bf G}_3$ 
or any of the {two non-comparable elements} of the Heyting algebra obtained by adding a top element to the Boolean algebra of 4 elements.
There are no other modalities, because if we apply operations $\neg$ and $B$ to the given nine modalities, 
we do not get anything new, as $\neg \neg B = B$, $BB = B$, and $B \neg B = \neg B$. 
\end{proof2}

\begin{figure} [ht]
\begin{center}

\begin{tikzpicture}

    \tikzstyle{every node}=[draw, circle, fill=white, minimum size=3pt, inner sep=0pt, label distance=1mm]
 
    \draw (0,0)		node (nBn)	[label=right:$\neg B \neg$] {} 
    
    -- ++(270:1cm)	node (nn)	[label=right:$\neg \neg$] {} 
    -- ++(225:1cm)	node (id)	[label=left:Id] {}
    -- ++(315:1cm)	node (B)	[label=below:$B$] {}
    -- ++(45:1cm)	node (Bnn)	[label=right:$B \neg \neg$] {} 
    -- ++ (nn); 
    
    \draw (5,0)	node (nB)	[label=right:$\neg B$] {} 
    
    -- ++(270:1cm)	node (nBnn)	[label=right:$\neg B \neg \neg$] {} 
    -- ++(270:1cm)	node (n)	[label=right:$\neg$] {}
    -- ++(270:1cm)	node (Bn)	[label=right:$B \neg$] {};
    
\end{tikzpicture}

\end{center}
\caption{\label{pnm} The positive and negative modalities of $\neg$ and $B$}
\end{figure}
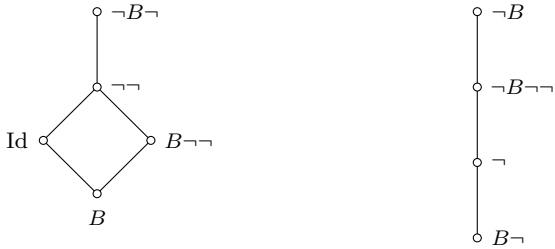

\subsection{An equational class}

It is natural to inquire whether the class $\mathbb{RL}^B$ is in fact an equational class. 
To this end, we start focusing our attention on the following equations, 
using $x \preccurlyeq y$ as an abbreviation for $x \vee y \approx y$: 

\vspace{5pt}

{\bf{(BI1)}} $Bx \preccurlyeq B(x \vee y)$, 

{\bf{(BI2)}} $B1 \approx 1$. 

\begin{lem}
Equations {\bf{(BI1)}} and {\bf{(BI2)}} hold in $\mathbb{RL}^B$. 
\end{lem}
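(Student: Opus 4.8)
The plan is to verify directly that each of the two equations holds in every $\mathbb{RL}^B$-algebra, using the defining conditions \textbf{(BE1)}, \textbf{(BE2)}, \textbf{(BI)} together with the properties of $B$ already established in Lemma \ref{LB}. The equation \textbf{(BI2)} is immediate: it asserts $B1 \approx 1$, which is precisely Lemma \ref{Blem}(ii) read in one direction, since $a = 1$ forces $Ba = 1$.

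For \textbf{(BI1)}, recall that $x \preccurlyeq y$ abbreviates $x \vee y \approx y$, so the equation $Bx \preccurlyeq B(x \vee y)$ is just another way of writing the inequality $Bx \leq B(x \vee y)$. First I would observe that $x \leq x \vee y$ always holds in the underlying lattice. Then, applying $B$-Monotonicity, which is exactly Lemma \ref{LB}(i), I conclude $Bx \leq B(x \vee y)$, which is the desired inequality. Translating back, $Bx \vee B(x \vee y) \approx B(x \vee y)$, i.e.\ $Bx \preccurlyeq B(x \vee y)$.

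I do not expect any genuine obstacle here, since both equations are essentially restatements of facts proved earlier. The only mild subtlety is purely bookkeeping: one must be careful to read $\preccurlyeq$ correctly as the lattice order encoded by the join equation, and to note that an inequality $u \leq v$ in $\mathbb{RL}$ is equivalent to the equation $u \vee v \approx v$, so that both \textbf{(BI1)} and \textbf{(BI2)} are legitimately equational (rather than merely quasi-equational) statements. Once this translation is made explicit, the proof reduces to citing Lemma \ref{LB}(i) for \textbf{(BI1)} and Lemma \ref{Blem}(ii) for \textbf{(BI2)}, and nothing further is required.
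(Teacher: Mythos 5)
Your proof is correct and takes essentially the same route as the paper: the paper derives \textbf{(BI2)} from Lemma \ref{Blem}(ii) exactly as you do, and derives \textbf{(BI1)} from Lemma \ref{LB}(vi), which is itself an immediate consequence of the $B$-monotonicity (Lemma \ref{LB}(i)) that you invoke directly. The only difference is this trivial choice of which previously established property to cite.
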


\begin{proof2}
The given equations follow immediately from lemmas \ref{Blem}(ii) and \ref{LB}(vi), respectively.
\end{proof2}

We are also interested in the equation \medskip

{\bf{(BI3})} $B(x \vee \neg x) \preccurlyeq Bx \vee \neg x$, 
\medskip

\noindent but it is not easy to see that it holds in $\mathbb{RL}^B$. 
Towards this goal, we state and prove the following result.  

\begin{lem} \label{SE} 
In $\mathbb{RL}^B$ the following hold: 

\medskip

\emph{(i)} $(B(x \vee \neg x) \wedge x) \vee \neg(B(x \vee \neg x) \wedge x) \approx 1$, 

\smallskip

\emph{(ii)} $B(x \vee \neg x) \wedge x \preccurlyeq Bx$, 

\smallskip

\emph{(iii)} $B(x \vee \neg x) \wedge \neg Bx \preccurlyeq \neg x$. 
\end{lem}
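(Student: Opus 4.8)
The plan is to abbreviate $c = B(x \vee \neg x)$ and to lean on the fact that, by \textbf{(BE2)} and \textbf{(BE1)}, $c$ is Boolean and $c \leq x \vee \neg x$. Since $c$ is Boolean, Proposition~\ref{infmon}(iv) lets me distribute meets over joins, so that $c = c \wedge (x \vee \neg x) = (c \wedge x) \vee (c \wedge \neg x)$. This single decomposition is the engine behind all three parts, and throughout I will freely replace meets by products on Boolean factors via Proposition~\ref{infmon}(i). Recall also that $x \preccurlyeq y$ just means $x \vee y = y$, i.e. $x \leq y$, so each claim is an inequality in disguise.

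For part (i), by Lemma~\ref{BL} the assertion is exactly that $c \wedge x$ is Boolean, and I will produce its complement explicitly as $w = (c \wedge \neg x) \vee \neg c$. On one side, the decomposition above gives $(c \wedge x) \vee w = c \vee \neg c = 1$. On the other side I verify $w \leq \neg(c \wedge x)$ termwise: $\neg c \leq \neg(c \wedge x)$ by antitonicity of $\neg$ (Lemma~\ref{GRL}(iii)), while $c \wedge \neg x \leq \neg(c \wedge x)$ follows because $(c \wedge \neg x)\cdot(c \wedge x) = c \cdot c \cdot \neg x \cdot x = c \cdot 0 = 0$, where I pass from meets to products using that $c$ is Boolean and idempotent (Proposition~\ref{infmon}(i),(ii)) and $x \cdot \neg x = 0$. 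Combining the two sides yields $1 = (c \wedge x) \vee w \leq (c \wedge x) \vee \neg(c \wedge x)$, so $(c \wedge x) \vee \neg(c \wedge x) = 1$, as required.

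Part (ii) is then immediate: $c \wedge x$ is Boolean by (i) and $c \wedge x \leq x$, so \textbf{(BI)} gives $c \wedge x \leq Bx$. For part (iii), I set $d = c \wedge \neg Bx$ and aim at $d \leq \neg x$. First, (ii) gives $c \wedge x \leq Bx$, so $d \wedge x = c \wedge x \wedge \neg Bx \leq Bx \wedge \neg Bx = 0$ using that $Bx$ is Boolean (Proposition~\ref{infmon}(iii)). The only genuinely non-formal move is upgrading $d \wedge x = 0$ to $d \cdot x = 0$, which is not automatic in a residuated lattice; it works here because $d \leq c$ with $c$ Boolean, so $d \cdot x \leq c \cdot x = c \wedge x$ (Proposition~\ref{infmon}(i)) while $d \cdot x \leq d$ by integrality, whence $d \cdot x \leq (c \wedge x) \wedge d = d \wedge x = 0$. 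Thus $d \cdot x = 0$, i.e. $d \leq \neg x$, which is the claim $c \wedge \neg Bx \preccurlyeq \neg x$.

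I expect the main obstacle to be part (i): the conceptual step is recognizing that $c \wedge x$ must be Boolean and guessing the right complement $(c \wedge \neg x) \vee \neg c$, together with the meet-to-product manoeuvre that verifies $c \wedge \neg x \leq \neg(c \wedge x)$. Once (i) is secured, (ii) is a one-line application of \textbf{(BI)}, and (iii) succeeds by reusing the same meet-to-product idea, now applied below the Boolean element $c$ to convert the lattice identity $d \wedge x = 0$ into the residuation fact $d \leq \neg x$.
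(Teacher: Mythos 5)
Your proof is correct and takes essentially the same route as the paper's: both rest on the Boolean decomposition $B(x \vee \neg x) = (B(x\vee\neg x)\wedge x)\vee(B(x\vee\neg x)\wedge\neg x)$ together with $B(x\vee\neg x)\wedge\neg x \leq \neg(B(x\vee\neg x)\wedge x)$ and $\neg B(x\vee\neg x) \leq \neg(B(x\vee\neg x)\wedge x)$ to get (i), then obtain (ii) from \textbf{(BI)}, and (iii) by converting a meet into a product below the Boolean element $B(x\vee\neg x)$. The only difference is organizational: where the paper cites Lemma~\ref{GRL}(i),(iv), Proposition~\ref{infmon}(viii) and Proposition~\ref{infmon}(x), you re-derive the needed special cases inline, packaging (i) as the exhibition of an explicit complement $(B(x\vee\neg x)\wedge\neg x)\vee\neg B(x\vee\neg x)$ and (iii) as a self-contained product computation.
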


\begin{proof2}
(i) Using Lemma \ref{GRL}(i), and using $T$ for the left hand side of the given equation, 
it is enough to get 

\medskip

(iv) $B(x \vee \neg x) \vee \neg B(x \vee \neg x) \approx 1$,

\smallskip

(v) $B(x \vee \neg x) \preccurlyeq T$, and

\smallskip

(vi) $\neg B(x \vee \neg x) \preccurlyeq T$.

\medskip

\noindent Part (iv) is immediate because of {\bf{(BE2)}}.

\smallskip

\noindent To see (v), using Proposition \ref{infmon}(viii), 
note that we have that $B(x \vee \neg x) \preccurlyeq x \vee \neg x$,  
(immediate using {\bf{(BE1)}}), $B(x \vee \neg x) \wedge x \preccurlyeq T$ (also immediate), 
and $B(x \vee \neg x) \wedge \neg x \preccurlyeq T$, 
which follows from $B(x \vee \neg x) \wedge \neg x \preccurlyeq \neg (B(x \vee \neg x) \wedge x)$, 
which holds because of Lemma \ref{GRL}(iv).

\smallskip

\noindent To see (vi), note that $B(x \vee \neg x) \wedge x \leq B(x \vee \neg x)$. 
So, using Lemma \ref{GRL}(iii), it follows that $\neg B(x \vee \neg x) \preccurlyeq \neg (B(x \vee \neg x) \wedge x)$. 
And so, $\neg B(x \vee \neg x) \preccurlyeq T$. 

\medskip

(ii) Use {\bf{(BI)}}, Part (i), and $B(x \vee \neg x) \wedge x \preccurlyeq x$. 

\medskip

(iii) Use Part (ii) and Proposition \ref{infmon}(x). 
\end{proof2}

\begin{pro} \label{BI3}
 The equation {\bf{(BI3)}} holds in $\mathbb{RL}^B$. 
\end{pro}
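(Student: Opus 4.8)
The plan is to prove equation {\bf{(BI3)}}, namely $B(x \vee \neg x) \preccurlyeq Bx \vee \neg x$, by reducing it to the auxiliary facts already established in Lemma \ref{SE}. Writing $c = B(x \vee \neg x)$ for brevity, I want to show $c \leq Bx \vee \neg x$. The key observation is that Lemma \ref{SE} hands me two inequalities that together cover $c$: part (ii) gives $c \wedge x \leq Bx \leq Bx \vee \neg x$, and part (iii) gives $c \wedge \neg Bx \leq \neg x \leq Bx \vee \neg x$. So if I can combine these two pieces by a suitable case-split on $c$, I will be done.

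The mechanism for combining them is Proposition \ref{infmon2}(viii) (join-distribution of a Boolean element, in the ``if $a \leq b \vee e$ and $a \wedge b \leq d$ and $a \wedge e \leq d$ then $a \leq d$'' form), applied with the Boolean element being $c$ itself. First I note that $c = B(x \vee \neg x)$ is Boolean by {\bf{(BE2)}}, so I am entitled to use the distributive machinery for Boolean elements. The natural case-split is on $x$ versus $\neg Bx$: I would like to invoke Proposition \ref{infmon}(viii) with $a = c$, $d = Bx \vee \neg x$, and the two ``halves'' being $x$ and $\neg Bx$. For this I need the covering premise $c \leq x \vee \neg Bx$, together with $c \wedge x \leq Bx \vee \neg x$ (which is Lemma \ref{SE}(ii) weakened by $\vee \neg x$) and $c \wedge \neg Bx \leq Bx \vee \neg x$ (which is Lemma \ref{SE}(iii) weakened by $Bx \vee$).

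The one premise not yet in hand is the covering inequality $c \leq x \vee \neg Bx$, and I expect this to be the main obstacle. Here I would use that $Bx \leq x$ (by {\bf{(BE1)}}), so $x \vee \neg Bx \geq Bx \vee \neg Bx = 1$ since $Bx$ is Boolean by {\bf{(BE2)}}; hence $x \vee \neg Bx = 1 \geq c$ trivially. With that, all three hypotheses of Proposition \ref{infmon}(viii) are met and the conclusion $c \leq Bx \vee \neg x$ follows at once, which is exactly {\bf{(BI3)}}.

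Alternatively, and perhaps more cleanly, since the covering inequality collapses to $x \vee \neg Bx = 1$, I would instead appeal directly to Proposition \ref{infmon}(ix) (the variant with hypothesis $b \vee e = 1$ rather than $a \leq b \vee e$), taking $b = x$, $e = \neg Bx$. This avoids the intermediate step of rewriting the top element and makes the argument a single application: $x \vee \neg Bx = 1$, $c \wedge x \leq Bx \vee \neg x$, and $c \wedge \neg Bx \leq Bx \vee \neg x$ yield $c \leq Bx \vee \neg x$. Either route is short once Lemma \ref{SE} is available, so the real work was front-loaded into establishing \ref{SE}(i)--(iii); the present proposition is essentially the harvest of that preparation.
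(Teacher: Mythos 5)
Your proof is correct and takes essentially the same route as the paper: the paper also applies Proposition \ref{infmon}(ix) to the Boolean element $B(x \vee \neg x)$ with target $Bx \vee \neg x$, using Lemma \ref{SE}(iii) for the crucial case. The only (immaterial) difference is the choice of case split: the paper splits on $Bx$ versus $\neg Bx$, so the covering condition is exactly {\bf{(BE2)}} and the first case is trivial without invoking Lemma \ref{SE}(ii), whereas your split on $x$ versus $\neg Bx$ requires the extra observation $x \vee \neg Bx = 1$ and uses Lemma \ref{SE}(ii).
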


\begin{proof2}
Using Proposition \ref{infmon}(ix), it is enough to check the following three conditions: 

\medskip

(i) $Bx \vee \neg Bx \approx 1$,

\smallskip

(ii) $B(x \vee \neg x) \wedge Bx \preccurlyeq Bx \vee \neg x$, and

\smallskip

(iii) $B(x \vee \neg x) \wedge \neg Bx \preccurlyeq Bx \vee \neg x$.

\medskip

\noindent Now, (i) is immediate due to {\bf{(BE2)}} and 
(ii) is also immediate as $B(x \vee \neg x) \wedge Bx \preccurlyeq Bx$.
Regarding (iii), it follows from Lemma \ref{SE}(iii). 
\end{proof2}

\begin{rem} \label{VE}
 Arguing as in the proof of Proposition \ref{BI3} and noting that $B(x \vee \neg x) \wedge \neg Bx \preccurlyeq B \neg x$ follows 
 using {\bf{(BI)}} from Lemma \ref{SE}(iii) and the fact that the term $B(x \vee \neg x) \wedge \neg Bx$ is Boolean, 
 it may be seen that also the inequality $B(x \vee \neg x) \preccurlyeq Bx \vee B \neg x$ holds in $\mathbb{RL}^B$. 
\end{rem}

\begin{lem} \label{BME} 
$B$ is monotone just using equations.
\end{lem}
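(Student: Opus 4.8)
The plan is to obtain $B$-monotonicity as a purely equational consequence of {\bf{(BI1)}}, in contrast with the proof of Lemma~\ref{LB}(i), which relied on the non-equational defining conditions {\bf{(BI)}}, {\bf{(BE1)}}, and {\bf{(BE2)}}. The key observation is that {\bf{(BI1)}}, namely $Bx \preccurlyeq B(x \vee y)$, already encodes monotonicity in equational form once we recall that $x \preccurlyeq y$ is, by definition, the equation $x \vee y \approx y$.

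First I would record {\bf{(BI1)}} in its expanded shape $Bx \vee B(x \vee y) \approx B(x \vee y)$. Then, assuming $a \preccurlyeq b$, i.e.\ $a \vee b \approx b$, I would apply replacement of equals by equals: since $a \vee b \approx b$, congruence of the term-forming operations gives $B(a \vee b) \approx Bb$. Instantiating {\bf{(BI1)}} at $x := a$ and $y := b$ yields $Ba \preccurlyeq B(a \vee b)$, and rewriting $B(a \vee b)$ as $Bb$ gives $Ba \preccurlyeq Bb$, that is, $Ba \leq Bb$. All of these steps --- instantiation of an equational axiom together with substitution of equal terms --- are legitimate moves in equational logic, so no appeal to the order-theoretic characterization of $B$ is needed.

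Since there is essentially no computational content beyond this single chain of rewrites, I do not anticipate a genuine obstacle. The only point requiring care is conceptual: one must verify that every inference used is equational, so that the monotonicity law ``$x \preccurlyeq y$ implies $Bx \preccurlyeq By$'' is genuinely derivable from {\bf{(BI1)}} alone (together with the background residuated-lattice equations), rather than from the semantic conditions {\bf{(BI)}}, {\bf{(BE1)}}, {\bf{(BE2)}}. This is precisely what licenses the subsequent use of $B$-monotonicity inside the purely equational development aimed at showing that $\mathbb{RL}^B$ is an equational class.
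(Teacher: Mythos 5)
Your proof is correct and follows essentially the same route as the paper: assume $a \vee b \approx b$, use congruence of $B$ to get $B(a\vee b) \approx Bb$, and then combine this with the instance $Ba \vee B(a \vee b) \approx B(a \vee b)$ of \textbf{(BI1)} to conclude $Ba \vee Bb \approx Bb$. Your additional remarks on why each step is purely equational match the purpose the paper assigns to this lemma in the proof of Theorem~\ref{ec}.
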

 
\begin{proof2}
 Suppose $x \vee y \approx y$. 
 Then, (i) $B(x \vee y) \approx By$. 
 Now, using {\bf{(BI1)}}, we have (ii) $Bx \vee B(x \vee y) \approx B(x \vee y)$. 
 From (i) and (ii) we get $Bx \vee By \approx By$. 
\end{proof2}

The following theorem answers positively the question posed above. 

\begin{teo} \label{ec}
$\mathbb{RL}^B$ is an equational class. 
An equational basis relative to $\mathbb{RL}$ is the following set of equations: 

\medskip

 {\bf{(BE1)}} $Bx \preccurlyeq x$, 
 
 {\bf{(BE2)}} $Bx \vee \neg Bx \approx 1$, 
 
 {\bf{(BI1)}} $Bx \preccurlyeq B(x \vee y)$, 
 
 {\bf{(BI2)}} $B1 \approx 1$, 
 
 {\bf{(BI3)}} $B(x \vee \neg x) \preccurlyeq Bx \vee \neg x$. 
\end{teo}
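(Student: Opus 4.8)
The plan is to prove the theorem by showing that the variety axiomatized relative to $\mathbb{RL}$ by the five displayed identities coincides with the class $\mathbb{RL}^B$. Since $x \preccurlyeq y$ merely abbreviates $x \vee y \approx y$, all five of \textbf{(BE1)}, \textbf{(BE2)}, \textbf{(BI1)}, \textbf{(BI2)}, \textbf{(BI3)} are genuine identities, so the class they define is equational by construction; hence establishing this coincidence immediately yields the statement. I would therefore prove two inclusions.

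For the inclusion of $\mathbb{RL}^B$ in the equational class, I would simply collect the work already done: \textbf{(BE1)} and \textbf{(BE2)} belong to the definition of $\mathbb{RL}^B$, the identities \textbf{(BI1)} and \textbf{(BI2)} were verified in the lemma preceding the introduction of \textbf{(BI3)}, and \textbf{(BI3)} is exactly Proposition \ref{BI3}. So every $\mathbb{RL}^B$-algebra satisfies all five identities with nothing further to check.

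The substantive direction is the converse: I would take a residuated lattice $\mathbf A$ carrying a unary $B$ that satisfies the five identities and verify the three characterizing conditions \textbf{(BE1)}, \textbf{(BE2)}, \textbf{(BI)}. Two of them are assumed outright, so the whole task reduces to recovering the quasi-equational maximality condition \textbf{(BI)} from identities alone. The key step I would isolate is that every Boolean element is a fixpoint of $B$, i.e.\ $b \vee \neg b \approx 1$ forces $Bb = b$. To get this, instantiate \textbf{(BI2)} to obtain $B(b \vee \neg b) = B1 = 1$, then feed this into \textbf{(BI3)} at $x = b$ to deduce $1 = B(b \vee \neg b) \leq Bb \vee \neg b$, whence $Bb \vee \neg b = 1$. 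Because \textbf{(BE2)} makes $Bb$ itself Boolean (via Lemma \ref{BL}), Proposition \ref{infmon}(xi), applied with the Boolean element $Bb$, gives $b \leq Bb$; combined with $Bb \leq b$ from \textbf{(BE1)} this yields $Bb = b$. With the fixpoint fact available, \textbf{(BI)} falls out at once: $B$ is monotone by Lemma \ref{BME} (a consequence of \textbf{(BI1)} alone), so if $b \leq a$ with $b$ Boolean, then $b = Bb \leq Ba$, which is precisely \textbf{(BI)}.

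The main obstacle is not the routine bookkeeping but this reverse inclusion — specifically, seeing that the single interaction identity \textbf{(BI3)} together with $B1 \approx 1$ is strong enough to pin every Boolean element as a fixpoint of $B$, which is exactly what upgrades the purely equational hypotheses to the greatest-element condition \textbf{(BI)}. All the ancillary ingredients I need (monotonicity of $B$ and the Boolean-element calculus of Proposition \ref{infmon}, especially part (xi)) are already in place from the earlier sections, so once the fixpoint observation is made the argument is short.
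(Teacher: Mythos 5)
Your proposal is correct and takes essentially the same route as the paper: both reduce the theorem to deriving the quasi-equation \textbf{(BI)} from the five identities, using monotonicity of $B$ from \textbf{(BI1)} (Lemma \ref{BME}), the combination \textbf{(BI2)}+\textbf{(BI3)} to obtain $Bb \vee \neg b = 1$ for Boolean $b$, and Proposition \ref{infmon}(xi) to conclude. The only difference is the order of the final steps --- you apply \ref{infmon}(xi) to the Boolean element $Bb$ first, obtaining the fixpoint $Bb = b$ (with \textbf{(BE1)}), and then use monotonicity, whereas the paper first lifts $Bb \vee \neg b = 1$ to $Ba \vee \neg b = 1$ by monotonicity and then applies \ref{infmon}(xi) to $Ba$; the ingredients are identical.
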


\begin{proof2}
It is enough to prove {\bf{(BI)}} using the given equations. 
Suppose (i) $b \leq a$ and (ii) $b \vee \neg b = 1$. 
From (i), using Lemma \ref{BME}, it follows (iii) $Bb \leq Ba$. 
From (ii), using {\bf{(BI2)}}, it follows $B(b \vee \neg b) = B1 = 1$, which, using {\bf{(BI3)}}, implies that $Bb \vee \neg b = 1$, 
which, using (iii) gives $Ba \vee \neg b = 1$, which, using Proposition \ref{infmon}(xi), implies $b\leq Ba$. 
\end{proof2}

\begin{rem}
Note that, as expected, the just given proof of {\bf{(BI)}} only uses equations {\bf{(BI1)}}, {\bf{(BI2)}}, and {\bf{(BI3)}}.  
\end{rem}

It is also natural to inquire whether the given equations are independent. 

\begin{pro}
The set \{\emph{{\bf{(BE1)}}, {\bf{(BE2)}}, {\bf{(BI1)}}, {\bf{(BI2)}}, {\bf{(BI3)}}}\} is independent.
\end{pro}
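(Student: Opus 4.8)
The plan is to establish independence in the usual way: for each of the five equations I would exhibit a residuated lattice together with a unary operation $B$ that satisfies the other four equations but falsifies the chosen one. Since any equation derivable from the remaining four would hold in every model of those four, producing such a counter-model for each equation shows that none is a consequence of the others, which is exactly independence.

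Three of the five are dispatched by degenerate choices of $B$. For {\bf{(BE1)}}, take any nontrivial residuated lattice and put $Bx \approx 1$: then {\bf{(BE2)}}, {\bf{(BI1)}}, {\bf{(BI2)}} and {\bf{(BI3)}} all collapse to trivialities about $1$, while $Bx \preccurlyeq x$ fails. Dually, for {\bf{(BI2)}} put $Bx \approx 0$: here {\bf{(BE1)}}, {\bf{(BE2)}}, {\bf{(BI1)}} and {\bf{(BI3)}} are immediate (the last because $0 \preccurlyeq \neg x$ always), but $B1 \approx 1$ fails. For {\bf{(BE2)}} take ${\bf G}_3$ with $B = \mathrm{Id}$: monotonicity of the identity gives {\bf{(BI1)}}, the equations {\bf{(BE1)}}, {\bf{(BI2)}}, {\bf{(BI3)}} are clear, yet $Bx \vee \neg Bx \approx 1$ fails at the middle element since $\frac{1}{2}\vee\neg\frac{1}{2}=\frac{1}{2}$.

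The substantive cases are {\bf{(BI1)}} and {\bf{(BI3)}}, and the observation driving both is that the canonical greatest-Boolean-below operator is monotone (Lemma~\ref{LB}(i)) and always satisfies {\bf{(BI3)}} (Proposition~\ref{BI3}); hence any witness must use an operation sending some $x$ to a Boolean element strictly below the true greatest Boolean below $x$. For {\bf{(BI1)}} I would work in ${\bf G}_2 \times {\bf G}_3$, keeping the canonical operator at every element except the coatom $(1,\frac{1}{2})$, where I set $B(1,\frac{1}{2}) = (0,0)$ instead of its true value $(1,0)$. Then {\bf{(BE1)}}, {\bf{(BE2)}} and {\bf{(BI2)}} are clear, as each value is a Boolean element below its argument and $B(1,1)=1$; monotonicity fails because $(1,0)=B(1,0) \not\le B(1,\frac{1}{2})=(0,0)$ although $(1,0)\le (1,\frac{1}{2})$, so {\bf{(BI1)}} fails. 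For {\bf{(BI3)}} I would take the four-element Boolean algebra $\{0,p,q,1\}$ with atoms $p,q$ and set $B0=0$, $Bp=0$, $Bq=q$, $B1=1$; this $B$ is monotone and Boolean-valued, so {\bf{(BE1)}}, {\bf{(BE2)}}, {\bf{(BI1)}} and {\bf{(BI2)}} hold, while at $x=p$ one has $B(p\vee\neg p)=B1=1$ whereas $Bp\vee\neg p = 0\vee q = q \ne 1$, so {\bf{(BI3)}} fails.

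The main obstacle is {\bf{(BI3)}}, the only equation expressing a genuine interaction between $B$, $\vee$ and $\neg$. It is the hardest to violate in isolation, since one must perturb the canonical operator precisely at a Boolean element while keeping $B$ monotone so that {\bf{(BI1)}} survives, and it is also the most delicate to re-confirm in the {\bf{(BI1)}}-witness, where it has to be checked element by element; the only altered instance there is $x=(1,\frac{1}{2})$, for which $x\vee\neg x=(1,\frac{1}{2})$ and both sides of {\bf{(BI3)}} equal $(0,0)$, so it goes through. Once these non-canonical operators are in hand, all remaining verifications reduce to routine order computations in the respective finite algebras.
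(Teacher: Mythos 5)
Your proof is correct and follows essentially the same strategy as the paper's: for each of the five equations, exhibit a finite residuated lattice equipped with an operation $B$ that validates the other four equations while falsifying the chosen one. Your particular witnesses differ from the paper's in most cases (the paper uses, e.g., ${\bf G}_3\times{\bf G}_3$ with $B$ fixing the four Boolean elements and collapsing everything else to $0$ for {\bf{(BI1)}}, and ${\bf G}_2\times{\bf G}_2$ with $B1=1$ and $Ba=0$ otherwise for {\bf{(BI3)}}), but all five of your counter-models check out, so the difference is only in the choice of witnessing algebras.
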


\begin{proof2}
To see that {\bf{(BE1)}} is independent of the rest, take the three-element G\"odel algebra ${\bf G}_3$
and define $B0 = 0$, and $Ba = 1$, if $a$ is not $0$.

\smallskip

\noindent To see that {\bf{(BE2)}} is independent of the rest, take the four-element G\"odel chain ${\bf G}_4$ 
and define $B0 = 0$, $B1 = 1$, $Ba = 0$, for the only atom $a$ of the chain, and $Bc = c$, for the remaining element $c$.

\smallskip

\noindent To see that {\bf{(BI1)}} is independent of the rest, take the G\"odel algebra ${\bf G}_3 \times {\bf G}_3$. 
If $a$ is any of the four Boolean elements, then put $Ba = a$, else put $Ba = 0$.

\smallskip

\noindent To see that {\bf{(BI2)}} is independent of the rest, take again ${\bf G}_3$, but now define $Ba = 0$, for every $a$.

\smallskip

\noindent Finally, to see that {\bf{(BI3)}} is independent of the rest, 
take the four-element Boolean ${\bf G}_2 \times {\bf G}_2$ algebra and define, 
for any $a$, if $a = 1$, then $Ba = 1$, else $Ba = 0$.  
\end{proof2}

\subsection{Subdirectly irreducible $\mathbb{RL}^B$-algebras}
In this section we show the subdirectly irreducible members of $\mathbb{RL}^B$ 
are those whose Boolean elements are only the top and bottom elements. 

\begin{defi}
Let {\bf{A}} $\in \mathbb{RL}^B$. 
A set $F$ contained in $A$ is said to be a $\mathbb{RL}^B$-filter iff for all $a, b \in A$ it satisfies

\emph{(1)} $1 \in F$, 

\emph{(2)} if $a \in F$ and $a \leq b$, then $b \in F$, 

\emph{(3)} if $a, b \in F$, then $a \cdot b \in F$, 

\emph{(4)} if $a \in F$, then $Ba \in F$. 
\end{defi}

\begin{pro}
Let {\bf{A}} $\in \mathbb{RL}^B$. 
The lattice of $\mathbb{RL}^B$-congruences is isomorphic to the set of $\mathbb{RL}^B$-filters. 
Indeed, let  $f: Con({\bf{A}}) \longrightarrow Fil({\bf{A}})$ be defined by:
if $\equiv$ is a $\mathbb{RL}^B$-congruence, then $f(\equiv)$ is the $\mathbb{RL}^B$-filter $F_\equiv = \{a \in A: a \equiv 1\}$. 
Then, the function $f$ is an isomorphism such that if $F$ is a $\mathbb{RL}^B$-filter, 
then $f^{-1}(F)$ is the $\mathbb{RL}^B$-congruence $\equiv_F$ defined by $a \equiv_F b$ iff $a \to b, b \to a \in F$.
\end{pro}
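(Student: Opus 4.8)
The plan is to verify that $f$ is a well-defined bijection whose inverse is the stated map $F \mapsto \equiv_F$, and that both $f$ and $f^{-1}$ are monotone with respect to inclusion, so that $f$ is in fact a lattice isomorphism. Since the correspondence between congruences and filters is classical for the FL$_{ew}$-reduct (see \cite{GJKO}), the real work lies in checking that the extra operation $B$ is handled correctly on both sides.

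First I would show that $f$ is well defined, i.e.\ that $F_\equiv = \{a \in A : a \equiv 1\}$ is an $\mathbb{RL}^B$-filter whenever $\equiv$ is an $\mathbb{RL}^B$-congruence. Conditions (1)--(3) are the usual ones and follow from the monoid and lattice compatibility of $\equiv$ exactly as in the $B$-free case. The only genuinely new point is condition (4): if $a \in F_\equiv$, then $a \equiv 1$, and since $\equiv$ is compatible with $B$ we get $Ba \equiv B1$; now $B1 = 1$ by Lemma \ref{Blem}(ii), so $Ba \equiv 1$, i.e.\ $Ba \in F_\equiv$.

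Next I would show that $\equiv_F$ is an $\mathbb{RL}^B$-congruence for every $\mathbb{RL}^B$-filter $F$, where $a \equiv_F b$ iff $a \to b, b \to a \in F$. That $\equiv_F$ is an equivalence relation compatible with $\wedge, \vee, \cdot, \to$ is again the classical argument, using the filter conditions (1)--(3) together with the residuated-lattice properties. The decisive step, and the main obstacle, is compatibility with $B$: assuming $a \equiv_F b$, I must derive $Ba \to Bb \in F$ and $Bb \to Ba \in F$. Here I would invoke Lemma \ref{LB}(vii), namely $B(a \to b) \le Ba \to Bb$. Since $a \to b \in F$, closure under $B$ (filter condition (4)) gives $B(a \to b) \in F$, and upward closure (condition (2)) then yields $Ba \to Bb \in F$; the symmetric argument using $b \to a \in F$ gives $Bb \to Ba \in F$. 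Hence $Ba \equiv_F Bb$, which is precisely the point where the inequality of Lemma \ref{LB}(vii) becomes indispensable.

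Finally I would check that the two assignments are mutually inverse. For $f(\equiv_F) = F$, note that $a \equiv_F 1$ iff $a \to 1 \in F$ and $1 \to a \in F$; the first always holds since $a \to 1 = 1$, and since $1 \to a = a$ in any residuated lattice the condition reduces to $a \in F$. For the identity $\equiv_{F_\equiv}\, =\, \equiv$, I would use the standard fact that in a residuated lattice $a \equiv b$ holds iff $a \to b \equiv 1$ and $b \to a \equiv 1$: the forward direction follows from $a \to a = 1$, and the backward direction from $a \equiv a \cdot (a \to b) \le a \wedge b \le a$ together with convexity of congruence classes, and symmetrically for $b$. Monotonicity of both maps with respect to $\subseteq$ is immediate, so $f$ is an order isomorphism and hence a lattice isomorphism; this last packaging step is routine once the bijection is established.
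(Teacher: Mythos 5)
Your proof is correct and follows essentially the same route as the paper's: both reduce the problem to the $B$-specific points, namely closure of $F_\equiv$ under $B$ via $B1=1$, and compatibility of $\equiv_F$ with $B$ via the inequality $B(a\to b)\leq Ba\to Bb$ combined with filter conditions (2) and (4). One small remark: the paper cites Lemma \ref{LB}(vi) at this step, but that is evidently a typo for (vii), which is exactly the inequality you invoke; your treatment of the mutual-inverse check is simply more detailed than the paper's, which dismisses it as obvious.
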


\begin{proof2}
It is obvious that  $F_\equiv $ is a $\mathbb{RL}^B$-filter. 
In order to prove that $\equiv_F $ is a congruence we need to prove that if $a \equiv_F b$, then $Ba \equiv_F Bb$, 
since the other conditions are known to be true for any residuated lattice. 
So, suppose $a \to b$ and $b \to a \in F$. 
Then, by the fourth condition in the definition of filter, $B(a \to b) \in F$.
Now, using  (vi) in Lemma \ref{LB} and the second condition in the definition of filter, it follows that $Ba \to Bb \in F$. 
Analogously, we obtain that $Bb \to Ba \in F$. 
Finally, it is also obvious that $f^{-1} \circ f = Id$.
\end{proof2}

Now we can characterize a family of $\mathbb{RL}^B$-filters.

\begin{pro}
Let {\bf{A}} $\in \mathbb{RL}^B$. 
If $a \in B(A)$, then $F_a = [a,1] = \{x \in A: a \leq x \leq 1\}$ is a $\mathbb{RL}^B$-filter.
\end{pro}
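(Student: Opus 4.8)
The plan is to verify directly that the interval $F_a = [a,1]$ satisfies the four conditions in the definition of an $\mathbb{RL}^B$-filter, given that $a$ is a Boolean element. The first three conditions involve only the residuated lattice reduct and should follow from elementary order- and monoid-theoretic facts together with the hypothesis that $a$ is Boolean; the genuinely new content is the fourth condition, concerning $B$, which is where I would focus the argument.

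First I would dispatch conditions (1) and (2), which require no Booleanness at all. Condition (1), $1 \in F_a$, is immediate since $a \leq 1$ always. Condition (2), upward closure, is immediate from the definition of $F_a$ as the interval $[a,1]$: if $a \leq x$ and $x \leq y$, then $a \leq y$ by transitivity of $\leq$. Next I would handle condition (3), closure under $\cdot$. Suppose $x, y \in F_a$, so $a \leq x$ and $a \leq y$. Here is where I use that $a$ is Boolean: by Proposition~\ref{infmon}(ii) we have $a \cdot a = a$, and by monotonicity of $\cdot$ from $a \leq x$ and $a \leq y$ we obtain $a = a \cdot a \leq x \cdot y$, so $x \cdot y \in F_a$.

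The main obstacle, and the step deserving the most care, is condition (4): if $x \in F_a$ then $Bx \in F_a$, i.e.\ from $a \leq x$ I must derive $a \leq Bx$. The natural route is to apply the characterizing condition \textbf{(BI)} of $B$, which says that any Boolean element below $x$ lies below $Bx$. So I would argue as follows: $a$ is Boolean by hypothesis, and $a \leq x$ since $x \in F_a$; hence \textbf{(BI)} (equivalently, the fact that $a \vee \neg a = 1$ from Lemma~\ref{BL} together with $a \leq x$) yields $a \leq Bx$, so $Bx \in F_a$. This is precisely the point where the restriction to Boolean $a$ is essential — the interval above an arbitrary element need not be closed under $B$.

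Having verified all four conditions, the proof concludes. I expect the whole argument to be short; the only subtlety is recognizing that Booleanness of $a$ is used in exactly two places — via idempotence $a \cdot a = a$ for the monoid-closure condition (3), and via \textbf{(BI)} for the $B$-closure condition (4) — whereas conditions (1) and (2) hold for any element whatsoever.
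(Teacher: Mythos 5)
Your proof is correct and takes essentially the same approach as the paper's: a direct verification of the four filter conditions, with the Booleanness of $a$ used exactly where you say, namely for closure under $\cdot$ and under $B$. The only cosmetic differences are that for condition (3) the paper argues via $a = a \wedge y = a \cdot y \leq x \cdot y$ instead of via idempotence $a = a \cdot a \leq x \cdot y$, and for condition (4) it writes $a = Ba \leq Bx$ (using Lemma~\ref{Blem}(i) and $B$-monotonicity) instead of invoking \textbf{(BI)} directly; these are interchangeable one-line variants of the same argument.
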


\begin{proof2}
It is obvious that $F_a$ satisfies the first two conditions of a $\mathbb{RL}^B$-filter. 
The third is an easy consequence of the fact that if $a \in B(A)$, then $a \ast x = a \land x$ and thus 
if $x,y \in F_a$, then $a=a \land y \leq x \ast y$ and thus $x \ast y \in F_a$. 
Finally, if $x \in F$, then $a=Ba \leq Bx$.
\end{proof2}

From now on, $F_a$ denotes the principal filter defined by $a\in B(A)$.

In order to characterize the subdirectly irreducible $\mathbb{RL}^B$-algebras, we will use the result of  \cite[Theorem 97]{Sz}: 
an algebra {\bf{A}} is subdirectly reducible iff 
there exists a family of non-trivial congruences $\sigma_i$ such that their intersection is the identity. 
In our case, this means that {\bf{A}} is subdirectly irreducible iff 
there is a unique coatom in the lattice of $\mathbb{RL}^B$-congruences of {\bf{A}}.

\begin{pro} \label{si}
Let {\bf{A}} $\in \mathbb{RL}^B$. 
Then, {\bf{A}} is subdirectly irreducible iff $B(A) = \{0,1\}$.
\end{pro}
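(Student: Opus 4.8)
The plan is to transfer the whole problem to the language of $\mathbb{RL}^B$-filters via the congruence--filter isomorphism established above, and then invoke the Szász criterion in the form stated: $\mathbf{A}$ is subdirectly reducible exactly when the identity congruence is the intersection of a family of non-identity congruences, equivalently (under the isomorphism) when the trivial filter $\{1\}$ is the intersection of a family of $\mathbb{RL}^B$-filters each strictly larger than $\{1\}$. Since $\{0,1\} \subseteq B(A)$ always holds, the hypothesis $B(A) = \{0,1\}$ just says that $0$ and $1$ are the only Boolean elements, and in particular presupposes $0 \neq 1$, i.e.\ non-triviality of $\mathbf{A}$.

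For the implication $B(A) = \{0,1\} \Rightarrow$ subdirectly irreducible, I would first show that under this hypothesis the only $\mathbb{RL}^B$-filters are $\{1\}$ and $A$. Indeed, let $F$ be a filter containing some $a \neq 1$. By the closure condition (4) for $\mathbb{RL}^B$-filters, $Ba \in F$; but $Ba$ is Boolean, so $Ba \in \{0,1\}$, and by Lemma \ref{Blem}(ii) the value $Ba = 1$ would force $a = 1$, against our choice. Hence $Ba = 0 \in F$, and upward closure gives $F = A$. Thus the congruence lattice has exactly two elements, the identity $\Delta$ and the all-relation $\nabla$. As $\Delta \neq \nabla$, the only non-identity congruence is $\nabla$, so $\Delta$ cannot be obtained as an intersection of non-identity congruences; by the Szász criterion $\mathbf{A}$ is subdirectly irreducible.

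For the converse I prove the contrapositive: if $B(A) \neq \{0,1\}$, then $\mathbf{A}$ is subdirectly reducible. Pick a Boolean $a$ with $a \neq 0$ and $a \neq 1$; then $\neg a$ is also Boolean by Proposition \ref{infmon2}(i), with $a \vee \neg a = 1$, and $\neg a \notin \{0,1\}$ as well, using $\neg\neg a = a$ from Proposition \ref{infmon}(v) together with $\neg 0 = 1$. Both $F_a = [a,1]$ and $F_{\neg a} = [\neg a,1]$ are $\mathbb{RL}^B$-filters by the preceding proposition on principal filters, and each lies strictly between $\{1\}$ and $A$ precisely because $a,\neg a \notin \{0,1\}$. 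Their intersection is $[a \vee \neg a, 1] = [1,1] = \{1\}$, so transporting back through the lattice isomorphism $f$ yields two non-identity congruences whose meet is the identity, and the Szász criterion gives subdirect reducibility.

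The only genuinely delicate point is the first implication: one must notice that closure of filters under $B$, combined with $B(A) = \{0,1\}$ and $Ba = 1 \iff a = 1$, collapses every proper filter down to $\{1\}$. Once that structural observation is made, both directions reduce to routine checks on the principal filters $F_a$ and their intersections; the only care needed is to confirm the properness of $F_a$ and $F_{\neg a}$, which rests on $a \neq 0$ and $a \neq 1$, respectively.
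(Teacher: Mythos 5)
Your proof is correct and follows essentially the same route as the paper: the congruence--filter isomorphism, the observation that every non-identity $\mathbb{RL}^B$-filter is closed under $B$ and hence contains a principal filter $F_a$ for some Boolean $a \neq 1$, and the Sz\'asz criterion, with the pair $F_a$, $F_{\neg a}$ intersecting in $\{1\}$ giving the reducibility direction. If anything, your write-up is more explicit than the paper's, which compresses the forward direction into a remark about a unique coatom of $B(A)$, whereas you show outright that $B(A)=\{0,1\}$ forces $\mathbf{A}$ to be simple.
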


\begin{proof2}
Observe first that if $F$ is a $\mathbb{RL}^B$-filter of $A$, then $F$ contains a Boolean element $a$ 
(by the third condition of $\mathbb{RL}^B$-filter) and, thus, $F$ contains $F_a$. 
So, to obtain the intersection  of the non-trivial $\mathbb{RL}^B$-filters of {\bf{A}} 
it is enough to compute the intersection of the filters $F_a$. 
However, this intersection is not the identity iff there exists a unique Boolean element $a$ such that $a$ is a coatom of $B(A)$. 
So, being $B(A)$ a Boolean algebra, this implies that $B(A) = \{0,1\}$.
\end{proof2}

\section{Comparing $B$ with other operations}

Operation $B$ is strongly related to other operations considered in the literature, e.g.  
the Monteiro-Baaz $\Delta$ and an operation defined with the join-complement $D$. 
In this section we study these relationships. 

\subsection{Comparing $B$ with $\Delta$}

The operation $\Delta$ was already considered by Monteiro in his paper on symmetric Heyting algebras in 1980 (see \cite{Mon}). 
Monteiro considered the same definitions of possibility and necessity operations given by  Moisil in \cite{Moi1} (see p. 67 in \cite{Mon}). 
However, instead of using Moisil's notation, Monteiro used $\nabla$ and $\Delta$, respectively. 
When so doing, he did not explicitly mention Moisil. 
However, many works by Moisil appear in the list of references of \cite{Mon}, including \cite{Moi1} and \cite{Moi2}. 
Monteiro also considered the $\Delta$ operator in the setting of linear symmetric Heyting algebras and 
studied the properties of $\Delta$ in the totally linear case (see \cite[Ch. 5, Sect. 3]{Mon}). 
In 1996, independently, Baaz in \cite{Ba} considered an expansion of G\"odel logic with a connective he also called $\Delta$ 
satisfying certain axioms and the rule $\varphi/\Delta \varphi$. 
Although he did not cite Monteiro, 
the proposed axioms are equivalent to the properties that Monteiro proved for his $\Delta$ operator 
in the framework of totally linear symmetric Heyting algebras. 
Baaz also provided a deduction theorem using $\Delta$: $\Gamma, \varphi \vdash \psi$ iff $\Gamma \vdash\Delta \varphi \to \psi$.
In 1998, H\'ajek considered Baaz's $\Delta$ in the context of BL-algebras and BL logic (see pp. 57-61 \cite{Ha}). 
He gave for $\Delta$ exactly the same axioms as Baaz presented in \cite{Ba} for G\"odel logic. 
He observed that all $\Delta$ axioms make it behave like a necessity operator, 
with the exception of the axiom $\Delta (\varphi \vee \psi) \to (\Delta \varphi \vee \Delta \psi)$, 
that is characteristic of possibility operations (see Remark 2.4.7 of \cite{Ha}). 
The $\Delta$ operation has also been studied in the more general context of Mathematical fuzzy logic, 
see several chapters in the handbook \cite{Handbook}. 
More recently, in \cite{AEM} the authors study, among other things, 
the expansion of FL$_{ew}$ with the $\Delta$ operator and show that it is conservative.

In MTL, $\Delta$ can always be defined over chains, namely as $\Delta 1 = 1$ and $\Delta x = 0$
for all $x \neq 0$, and thus, $\Delta$ and $B$ over MTL-chains coincide. 
But there are (non-linearly) MTL-algebras where $\Delta$ does not exist. 
Nevertheless, this is not a problem because MTL is semilinear, and the semantics of $\Delta$ over chains is clear. 
However, there is not a  clear semantical interpretation of the axioms of $\Delta$ in the general context of residuated lattices. 

In the context of a residuated lattice, 
the operator $\Delta$ is introduced e.g. in \cite{AEM} by the same equations as in MTL or BL (cf.\cite[p. 58]{Ha}): 

\medskip

\begin{tabular}{ll}
 {\bf{($\Delta$E1)}} $\Delta x \preccurlyeq x$, \\ 
 {\bf{($\Delta$E2)}} $\Delta x \vee \neg \Delta x \approx 1$, \\ 
 {\bf{($\Delta$I1)}} $\Delta(x \vee y) \preccurlyeq \Delta x \vee \Delta y$, \\ 
 {\bf{($\Delta$I2)}} $\Delta 1 \approx 1$, \\ 
 {\bf{($\Delta$I3)}} $\Delta x \preccurlyeq \Delta \Delta x$, \\ 
 {\bf{($\Delta$I4)}} $\Delta (x \to y) \preccurlyeq \Delta x \to \Delta y$, 
\end{tabular}

\medskip

\noindent where, again, $x \preccurlyeq y$ abbreviates $x \vee y \approx y$. 
Note that {\bf{($\Delta$I3)}} may be derived from the rest: 
it is enough to check that an operator satisfying the rest of the equations, 
satisfies all the equations in Theorem \ref{ec}, and hence  the quasi-equation {\bf{(BI)}} as well; 
then use (iii) of Lemma \ref{Blem}. 
Also, regarding their defining equations, 
the only difference between $\Delta$ and $B$ is that $\Delta$ satisfies $\Delta(x \vee y) \preccurlyeq \Delta x \vee \Delta y$, 
whereas $B$ only satisfies the particular case $y=\neg x$, that is, 
$B$ only satisfies $B(x \vee \neg x) \preccurlyeq Bx \vee B \neg x$, as stated in Remark  \ref{VE}. 

We will denote by $ \mathbb{RL}^\Delta$ the class of residuated lattices expanded with $\Delta$. 

It will be useful to bear in mind the following fact.

\begin{lem}\label{DB}
 Let ${\bf{A}} \in \mathbb{RL}^\Delta$ and $a \in A$. 
 Then, $\Delta a = a$ iff $a$ is Boolean. 
\end{lem}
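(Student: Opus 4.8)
The plan is to prove the biconditional $\Delta a = a \iff a$ is Boolean by treating each direction separately, exploiting the fact that the defining equations for $\Delta$ include the two key conditions \textbf{($\Delta$E1)} and \textbf{($\Delta$E2)}, which are literally the same as \textbf{(BE1)} and \textbf{(BE2)} for $B$. First I would prove the forward direction: assuming $\Delta a = a$, substitute into \textbf{($\Delta$E2)}, which gives $\Delta a \vee \neg \Delta a \approx 1$, hence $a \vee \neg a = 1$, which by Lemma \ref{BL} is exactly the condition for $a$ to be Boolean. This direction is immediate and mirrors the proof of Lemma \ref{Blem}(i).

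For the converse, I would assume $a$ is Boolean, i.e.\ $a \vee \neg a = 1$, and show $\Delta a = a$. By \textbf{($\Delta$E1)} we already have $\Delta a \leq a$, so it suffices to establish the reverse inequality $a \leq \Delta a$. The natural route is to observe that, as remarked in the excerpt just before this lemma, any operator satisfying the $\Delta$-equations satisfies all equations of Theorem \ref{ec}, and hence the quasi-equation \textbf{(BI)}: if $b \leq a$ and $b \vee \neg b = 1$, then $b \leq \Delta a$. Applying \textbf{(BI)} with $b = a$ (legitimate since $a \leq a$ and $a$ is Boolean) yields $a \leq \Delta a$. Combining with \textbf{($\Delta$E1)} gives $\Delta a = a$.

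The main obstacle, such as it is, lies in justifying that $\Delta$ genuinely satisfies \textbf{(BI)}. The paper's preceding discussion already asserts this, noting that an operator meeting the remaining $\Delta$-equations satisfies the equational basis of Theorem \ref{ec}; since \textbf{($\Delta$I1)} $\Delta(x \vee y) \preccurlyeq \Delta x \vee \Delta y$ is strictly stronger than \textbf{(BI1)} and \textbf{(BI3)}, which are its instances, the five equations \textbf{(BE1)}, \textbf{(BE2)}, \textbf{(BI1)}, \textbf{(BI2)}, \textbf{(BI3)} all hold for $\Delta$, and Theorem \ref{ec} then delivers \textbf{(BI)} for $\Delta$. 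So I would either invoke this observation directly or, to keep the argument self-contained, reprove $a \leq \Delta a$ from scratch: from $a \vee \neg a = 1$ and $\Delta 1 = 1$ one gets $\Delta(a \vee \neg a) = 1$, then \textbf{($\Delta$I1)} gives $\Delta a \vee \Delta \neg a = 1$; combined with the fact that $\Delta \neg a \leq \neg a$ (by \textbf{($\Delta$E1)}) this forces $\Delta a \vee \neg a = 1$, whence Proposition \ref{infmon}(xi) (applicable since $\Delta a$ is Boolean by \textbf{($\Delta$E2)}) yields $a \leq \Delta a$. Either formulation closes the proof.
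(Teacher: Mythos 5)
Your proof is correct, and your self-contained version of the converse is essentially the paper's own argument: the paper likewise derives $\Delta a \vee \Delta \neg a = 1$ from \textbf{($\Delta$I2)} and \textbf{($\Delta$I1)}, and then concludes $a \leq \Delta a$ via Lemma \ref{GRL}(ii) (using $a \cdot \Delta a \leq \Delta a$ and $a \cdot \Delta \neg a \leq a \cdot \neg a = 0$), where you instead weaken to $\Delta a \vee \neg a = 1$ and apply Proposition \ref{infmon}(xi); the two finishes are interchangeable. The forward direction is identical to the paper's.

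One claim in your first route is wrong, however, and should not stand: \textbf{(BI1)}, i.e.\ $\Delta x \preccurlyeq \Delta(x \vee y)$, is \emph{not} an instance of \textbf{($\Delta$I1)}. The two inequalities point in opposite directions: \textbf{(BI1)} is a necessity-style monotonicity law, while \textbf{($\Delta$I1)} is a possibility-style subadditivity law, so no substitution into \textbf{($\Delta$I1)} can yield \textbf{(BI1)}. (Your parallel claim for \textbf{(BI3)} is fine: take $y = \neg x$ in \textbf{($\Delta$I1)} and use \textbf{($\Delta$E1)}.) The paper obtains \textbf{(BI1)} for $\Delta$ in Proposition \ref{DimB} from \textbf{($\Delta$I2)} together with \textbf{($\Delta$I4)} --- an axiom your argument never invokes. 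So if you want the first route, either cite the paper's remark preceding the lemma (which asserts that the $\Delta$-equations entail all equations of Theorem \ref{ec}, hence \textbf{(BI)}) or reproduce that ($\Delta$I2)--($\Delta$I4) derivation; as justified, that route has a gap. Since your second route is complete and correct, the lemma is nevertheless proved.
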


\begin{proof}
 Supposing $\Delta a = a$, using {\bf{($\Delta$E2)}} it follows that $a$ is Boolean. 
 On the other hand, suppose $a$ is Boolean. 
 Considering {\bf{($\Delta$E1)}}, it is enough to prove that $a \leq \Delta a$. 
 By Lemma \ref{GRL}(ii), it is enough in turn to prove $\Delta a \vee \Delta \neg a = 1$, 
 $a \cdot \Delta a \leq \Delta a$, and $a \cdot \Delta \neg a \leq \Delta a$. 
 The first condition holds using {\bf{($\Delta$I1)}} and {\bf{($\Delta$I2)}}, since $a$ is Boolean. 
 The second condition is immediate. 
 For the third, observe that $a \cdot \Delta \neg a \leq a \cdot \neg a = 0 \leq \Delta a$.  
\end{proof}

Actually, $\Delta$ is somewhat stronger than $B$ in the following sense. 

\begin{pro} \label{DimB} Let $\bf A \in \mathbb{RL}$. 
 If $\Delta$ exists in $\bf A$, then so does $B$, with $B = \Delta$. 
\end{pro}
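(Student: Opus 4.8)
The plan is to show that the operation $\Delta$ \emph{itself} satisfies the three conditions {\bf{(BE1)}}, {\bf{(BE2)}}, {\bf{(BI)}} that characterize $B$. Since those conditions say precisely that $\Delta a$ is the greatest Boolean element below $a$, establishing them will simultaneously prove that $Ba$ exists for every $a \in A$ and that it equals $\Delta a$, giving $B = \Delta$. Thus no separate existence argument is needed: the characterization is doing the work.

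The only preparatory fact I would need is that $\Delta$ is monotone, i.e.\ $b \leq a$ implies $\Delta b \leq \Delta a$, and this follows from the $\Delta$-axioms alone. Indeed, if $b \leq a$ then $b \to a = 1$, so {\bf{($\Delta$I2)}} gives $\Delta(b \to a) = \Delta 1 = 1$, and {\bf{($\Delta$I4)}} then yields $1 = \Delta(b \to a) \preccurlyeq \Delta b \to \Delta a$, whence $\Delta b \to \Delta a = 1$, that is $\Delta b \leq \Delta a$. With this in hand I would verify the three conditions. Condition {\bf{(BE1)}} is literally {\bf{($\Delta$E1)}}, and {\bf{(BE2)}} is literally {\bf{($\Delta$E2)}}, which by Lemma \ref{BL} also records that each $\Delta a$ is Boolean. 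For {\bf{(BI)}}, suppose $b \leq a$ with $b$ Boolean; by Lemma \ref{DB} we have $\Delta b = b$, and by the monotonicity just proved $\Delta b \leq \Delta a$, so $b \leq \Delta a$, as required.

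Once the three conditions hold, the conclusion is immediate: $\Delta a$ is the greatest Boolean below $a$, so $Ba$ exists and $Ba = \Delta a$ for all $a$. I do not expect a genuine obstacle here; the crux is the observation that Lemma \ref{DB} already isolates the Boolean-fixing behaviour of $\Delta$, after which {\bf{(BI)}} collapses to monotonicity. An equivalent but slightly longer route would be to check that $\Delta$ validates all five equations {\bf{(BE1)}}--{\bf{(BI3)}} of Theorem \ref{ec} (using {\bf{($\Delta$I1)}} together with {\bf{($\Delta$E1)}} for {\bf{(BI3)}}, and monotonicity for {\bf{(BI1)}}) and then invoke that theorem to obtain {\bf{(BI)}}; I would prefer the direct argument above since it avoids reproving the quasi-equation.
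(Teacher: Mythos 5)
Your proof is correct, but it takes a different route from the paper's. The paper leans on Theorem \ref{ec}: it observes that $\Delta$ satisfies the five-equation basis {\bf{(BE1)}}, {\bf{(BE2)}}, {\bf{(BI1)}}, {\bf{(BI2)}}, {\bf{(BI3)}} (only {\bf{(BI1)}} requiring an argument, obtained from $x \to (x\vee y) \approx 1$ via {\bf{($\Delta$I2)}} and {\bf{($\Delta$I4)}}), and then invokes that theorem to conclude that the quasi-equation {\bf{(BI)}} holds, so that $\Delta$ is a $B$-operator. You instead bypass Theorem \ref{ec} entirely and verify the defining conditions {\bf{(BE1)}}, {\bf{(BE2)}}, {\bf{(BI)}} directly: your key ingredients are full monotonicity of $\Delta$ (derived from {\bf{($\Delta$I2)}} and {\bf{($\Delta$I4)}} by exactly the same computation the paper uses for {\bf{(BI1)}}, just applied to $b \to a = 1$ rather than $x \to (x \vee y) = 1$) together with Lemma \ref{DB}, which fixes Boolean elements under $\Delta$, so that {\bf{(BI)}} follows in one line. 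The trade-off: the paper's proof is shorter on the page because Theorem \ref{ec} has already done the hard work of recovering {\bf{(BI)}} from equations (via Lemma \ref{BME} and Proposition \ref{infmon}(xi)), whereas your argument is self-contained modulo Lemma \ref{DB} and makes the existence claim transparent --- the three conditions literally say $\Delta a$ is the greatest Boolean below $a$, so existence and identification of $B$ come simultaneously, with no detour through the equational axiomatization. Both are sound; yours is arguably the more elementary reading, and you correctly note the paper's route as the alternative in your closing remark.
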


\begin{proof2}
Considering Theorem 1, all we have to see is that $\Delta$ satisfies the equational basis given for $B$. 
This is immediate excepting {\bf{(BI1)}}. 
Let us see that the equation $\Delta x \preccurlyeq \Delta (x \vee y)$ also holds. 
As we have $x \to (x \vee y) \approx 1$, using {\bf{($\Delta$I2)}} and {\bf{($\Delta$I4)}} we get 
$1 \preccurlyeq \Delta x \to \Delta (x \vee y)$, which gives $\Delta x \preccurlyeq \Delta (x \vee y)$. 
\end{proof2}

On the other hand, we have the following result.

\begin{pro}
There exist finite residuated lattices where $B$ exists, but $\Delta$ does not.
\end{pro}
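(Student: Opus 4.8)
The plan is to exploit Proposition \ref{finB}, which guarantees that $B$ exists in \emph{every} finite residuated lattice; hence it suffices to exhibit a single finite residuated lattice in which $\Delta$ fails to exist. The natural candidate is the five-element G\"odel algebra of Example \ref{5}, precisely because it was already observed (immediately after Lemma \ref{LB}) that the inequality $B(a \vee b) \leq Ba \vee Bb$ fails there for the two coatoms $s$ and $t$. This is exactly the kind of failure that $\Delta$ is not allowed to have, so this algebra should do the job.

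First I would pin down its Boolean elements. By Lemma \ref{BL}, an element $a$ is Boolean iff $a \vee \neg a = 1$, where $\neg a = a \to 0$. Computing the G\"odel negation in this algebra one checks that $\neg 0 = 1$ while $\neg a = 0$ for every $a \neq 0$; consequently $a \vee \neg a = a \neq 1$ for each $a \in \{r, s, t\}$, so the only Boolean elements are $0$ and $1$. It follows that $B$ is forced to be $B1 = 1$ and $Ba = 0$ for all $a \neq 1$. In particular, since $s$ and $t$ are incomparable coatoms with $s \vee t = 1$, we have $B(s \vee t) = B1 = 1$ whereas $Bs \vee Bt = 0 \vee 0 = 0$.

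Next I would argue by contradiction. Suppose $\Delta$ existed on this algebra. By Proposition \ref{DimB}, $B$ then exists and $\Delta = B$. But any $\Delta$ must satisfy axiom \textbf{($\Delta$I1)}, namely $\Delta(x \vee y) \preccurlyeq \Delta x \vee \Delta y$; instantiating $x = s$, $y = t$ and using $\Delta = B$ yields $1 = B(s \vee t) \preccurlyeq Bs \vee Bt = 0$, i.e. $1 \leq 0$, which is absurd. Hence no operation $\Delta$ can exist on this finite algebra, while $B$ does, establishing the proposition. I do not expect a genuine obstacle here: the only delicate point is the reduction step that a hypothetical $\Delta$ is forced to coincide with $B$, and this is supplied verbatim by Proposition \ref{DimB}, so the whole argument rests on the two routine verifications above (the computation of the center $\{0,1\}$ and the failure of \textbf{($\Delta$I1)} at $s,t$). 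The main care needed is simply to record the negation table correctly so that the identification of the Boolean skeleton is beyond doubt.
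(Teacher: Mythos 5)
Your proof is correct and takes essentially the same approach as the paper: both rest on Proposition \ref{finB} to obtain $B$ on the finite G\"odel algebra of Example \ref{5}, and both derive the contradiction from the failure of {\bf{($\Delta$I1)}} at the coatoms $s$ and $t$. The only difference is cosmetic: you force the values of a hypothetical $\Delta$ by invoking Proposition \ref{DimB} (so $\Delta = B$, whose values you computed), whereas the paper reads $\Delta s = \Delta t = 0$ directly off {\bf{($\Delta$E1)}} and {\bf{($\Delta$E2)}} and $\Delta(s \vee t) = 1$ off {\bf{($\Delta$I2)}}.
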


\begin{proof2}
 Using Proposition \ref{finB}, it follows that $B$ exists in the G\"odel algebra of Example \ref{5}, as the algebra is finite. 
 Now, take its coatoms $s$ and $t$.  
 To see that $\Delta$ does not exist, note that {\bf{($\Delta$E1)}} and {\bf{($\Delta$E2)}} imply that $\Delta s = \Delta t = 0$. 
 So, $\Delta s \vee \Delta t = 0$. 
 However, $\Delta (s \vee t) = 1$, due to {\bf{($\Delta$I2)}}. 
 Then, {\bf{($\Delta$I1)}} is not satisfied. 
\end{proof2}

Example \ref{5} makes clear the basic difference between $\Delta$ and $B$, when we define them over MTL-algebras. 
It is well known that MTL$^\Delta$, the expansion of MTL with $\Delta$, is semilinear, i.e.  
each  algebra of the variety is a subdirect product of lineraly ordered MTL$^\Delta$-algebras. 
Moreover, we have seen that $\Delta$ and $B$ coincide over chains. 
Thus, Example \ref{5} proves that MTL$^B$, the expansion of MTL with $B$, is not semilinear. 
In fact, this was already clear from Proposition \ref{si}, since there exist subdirectly irreducible MTL$^B$-algebras 
(like the one defined in Example \ref{5}) that are not linearly ordered.

\subsection{Comparing $B$ with an operation using the join-complement} \label{join-complement}

The join-complement operation $D$ has a long history.
In 1919, Skolem considered lattices expanded with both meet and join relative complements (see \S 2 of \cite{Sk1} or pp. 77-85 of \cite{Sk2}).   
He just worked from an algebraic point of view. 
He noted that existence of both top $1$ and bottom $0$ is implied. 
Also, he briefly considered the meet and join-complements, for which, for an arbitrary argument $a$, 
he used the notations $\frac{0}{a}$ and $1-a$, respectively. 

In 1942, Moisil defined possibility as $\neg \neg$ and necessity as $DD$ in a logical setting where he had both 
intuitionistic negation $\neg$ and its dual $D$ (see \S 4 of \cite{Moi1} or p. 365 in \cite{Moi2}). 
He did not mention Skolem. 
In 1949, Ribenboim proved that distributive lattices with $D$ form an equational class (see \cite{Ri}).  
In fact, the meet is not needed, as the class with join and join-complement $D$ is already an equational class. 
In 1974, Rauszer, mainly considering algebraic aspects, studied a logic with conjunction, disjunction, conditional, and its dual (see \cite{Rau}). 
She also included both intuitionistic negation $\neg$ and its dual $D$, though these can be easily defined.  
Her axiomatization included the expected axioms plus the rules \emph{modus ponens} and $\varphi/\neg D\varphi$. 
She also provided a deduction theorem using $(\neg D)^n$.
She neither mentioned Skolem nor Moisil. 

In the context of a join semi-lattice $\bf A$, 
it is possible to postulate the existence of the join-complement 
$Da = \min\{b \in A:$ for all $c \in A$, $c \leq a \vee b \},$ for $a \in A$. 
This is equivalent to the following two conditions: 

\medskip

\begin{tabular}{ll}
 {\bf{(DI)}} $b \leq a \vee Da$, for all $a, b \in A$, \\
 {\bf{(DE)}} for any $a, b \in A$, if for all $c \in A$, $c \leq a \vee b$, \\
 then $Da \leq b$.  
\end{tabular}

\medskip

\noindent In a join semi-lattice the existence of $D$ implies the existence of both top $1 = a \vee Da$, for any $a$, 
and bottom $0 = D(a \vee Da)$, for any $a$.  
Moreover, $D$ can be equationally characterized by the following three equations, 
where, again, we use $x \preccurlyeq y$ as an abbreviation for $x \vee y \approx y$: 

\medskip

\begin{tabular}{ll}
{\bf{(DI)}}  & $y \preccurlyeq x \vee Dx$, \\ 
{\bf{(DE1)}} & $D(x \vee Dx) \preccurlyeq y$, \\ 
{\bf{(DE2)}} & $Dy \preccurlyeq x \vee D(x \vee y)$.  
\end{tabular}

\medskip

In what follows, $\mathbb{RL}^D$ will denote the class of residuated lattices expanded with an operation $D$ satisfying these equations. 
Obviously, by definition, $\mathbb{RL}^D$ is an equational class. 
Notice that in a residuated lattice $\bf A$, having in the signature the symbols $0$ and $1$ for the bottom and top elements, respectively,
the above definition of $D$ can be simplified to $Da = \min\{b \in A: a \vee b = 1\}$, 
and the condition {\bf{(DE)}} simplifies to be 

\medskip

\begin{tabular}{ll}
{\bf{(DE$^\prime$)}}  & for any $a, b \in A$, if $a \vee b = 1$, then $Da \leq b$. 
\end{tabular}

\medskip

\noindent Moreover the equations {\bf{(DI)}} and {\bf{(DE1)}} can also be simplified to: 

\medskip

\begin{tabular}{ll}
{\bf{(DI$^\prime$)}}  & $x \vee Dx \approx 1$, \\ 
{\bf{(DE1$^\prime$)}} & $D1 \approx 0$. 
\end{tabular}

\begin{rem} Note that, while $x \preccurlyeq \neg\neg x$ holds in $\mathbb{RL}$, 
from {\bf{(DE$^\prime$)}} and {\bf{(DI$^\prime$)}} it follows that $ DDx \preccurlyeq x$ holds in $\mathbb{RL}^D$. 
Note also that in a Heyting algebra $D$ 
is the dual of $\neg$, 
since in that case $\neg$ coincides with the meet complement. 
\end{rem}

As in the case of $B$, $D$ may not exist in some residuated lattices, but it always exists in the finite ones. 

\begin{pro} 
 Let {\bf{A}} be a finite residuated lattice. 
 Then $D$ exists in $A$.
\end{pro}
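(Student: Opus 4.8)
The statement to prove is that $D$ exists in any finite residuated lattice. Recall the definition: $Da = \min\{b \in A : a \vee b = 1\}$ (using the simplified condition (DE$'$) available in residuated lattices with top element $1$). So I must show that this set $S_a = \{b \in A : a \vee b = 1\}$ has a minimum element for every $a$.

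The plan is to mirror the strategy used for $B$ in Proposition~\ref{finB}. Since $A$ is finite, the candidate for $Da$ is the meet of all elements $b$ satisfying $a \vee b = 1$, namely $Da = \bigwedge \{b \in A : a \vee b = 1\}$. This meet exists because $A$ is finite. To conclude that this meet is actually the \emph{minimum} of $S_a$ (and hence equals $Da$), it suffices to show that $S_a$ is closed under finite meets: if $b_1, b_2 \in S_a$, i.e.\ $a \vee b_1 = 1$ and $a \vee b_2 = 1$, then $b_1 \wedge b_2 \in S_a$, i.e.\ $a \vee (b_1 \wedge b_2) = 1$. Given this closure, the finite meet $\bigwedge S_a$ lies in $S_a$ itself, is below every element of $S_a$, and is therefore the least element of $S_a$, which is exactly the required minimum.

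The key step is thus the identity $a \vee (b_1 \wedge b_2) = 1$ whenever $a \vee b_1 = a \vee b_2 = 1$. This is where the lattice must be distributive, so the main thing to verify is that the underlying lattice of a residuated lattice satisfies the needed distributive law $a \vee (b_1 \wedge b_2) = (a \vee b_1) \wedge (a \vee b_2)$; indeed, with that law the right-hand side is $1 \wedge 1 = 1$. I would either invoke the fact that lattices underlying residuated lattices are distributive, or avoid full distributivity by arguing directly. In fact, one direction $(a\vee b_1)\wedge(a\vee b_2) \ge a\vee(b_1\wedge b_2)$ holds in every lattice, but here I need the reverse inequality $(a \vee b_1) \wedge (a \vee b_2) \leq a \vee (b_1 \wedge b_2)$, which is the genuinely distributive direction. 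I expect this distributivity to be the main (and essentially the only) obstacle; everything else is the same finiteness-plus-closure bookkeeping as in the proof for $B$.

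An alternative that sidesteps distributivity would be to use the residuated structure directly and mimic the style of Lemma~\ref{GRL}(i) or Proposition~\ref{infmon2}(ii), exploiting that $a \vee b_1 = 1$ forces complementation-like behaviour; but since the lattice reduct of a bounded integral commutative residuated lattice need not be distributive in general, the cleanest and safest route is to establish the closure of $S_a$ under meets by whatever distributivity or direct residuation argument is available, and then read off the minimum from finiteness. I would present the proof in two short moves: first define $Da := \bigwedge S_a$ and note it exists by finiteness, then verify $Da \in S_a$ via closure under meets, concluding that $Da = \min S_a$ as required.
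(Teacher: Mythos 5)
Your proof skeleton is exactly the paper's: since $A$ is finite, it suffices to show that $S_a=\{b\in A: a\vee b=1\}$ is closed under binary meets, so that $\bigwedge S_a$ belongs to $S_a$ and is therefore the required minimum. But the only step that actually needs an argument --- if $a\vee b_1=1$ and $a\vee b_2=1$, then $a\vee(b_1\wedge b_2)=1$ --- is precisely the step you never prove. Your first route, lattice distributivity, is unavailable: the lattice reduct of a finite bounded integral commutative residuated lattice need not be distributive. For a concrete witness, take $M_3$ (bottom $0$, atoms $u,v,w$, top $m$), adjoin a new top $1$ above $m$, and define $x\cdot y=0$ whenever $x,y\leq m$, with $1$ acting as unit; this operation is commutative, associative, preserves binary joins and satisfies $x\cdot 0=0$, hence is residuated because the lattice is finite, and the resulting algebra is a finite member of $\mathbb{RL}$ whose lattice reduct contains $M_3$, so distributivity fails (e.g.\ $u\vee(v\wedge w)=u$ while $(u\vee v)\wedge(u\vee w)=m$). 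You acknowledge this possibility yourself, but your fallback --- ``by whatever distributivity or direct residuation argument is available'' --- is a placeholder, not a proof. So the proposal has a genuine gap at its single nontrivial point.

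The paper closes that gap with a short monoidal computation which is worth reconstructing: from $a\vee b_1=1$ and $a\vee b_2=1$ we get $(a\vee b_1)\cdot(a\vee b_2)=1$; since $\cdot$ distributes over $\vee$ in \emph{every} residuated lattice (it is residuated, hence join-preserving in each argument), this equals $(a\cdot a)\vee(a\cdot b_2)\vee(b_1\cdot a)\vee(b_1\cdot b_2)$; and each of the four joinands lies below $a\vee(b_1\wedge b_2)$, the first three because integrality gives $x\cdot y\leq x\wedge y\leq a$, and the last because $b_1\cdot b_2\leq b_1\wedge b_2$. Hence $1\leq a\vee(b_1\wedge b_2)$, as needed. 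In other words, multiplication together with integrality substitutes for the missing lattice distributivity; this is the same device that underlies Lemma \ref{GRL}(ii) and Proposition \ref{infmon}(i), and it is exactly the ``direct residuation argument'' your proposal alludes to but does not supply.
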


\begin{proof2}
It is enough to prove that $\bigwedge\{ b \in A: a \vee b = 1 \}$ exists in $A$. 
For that, it is enough to see that if $a \vee b_{1} = 1$ and $a \vee b_{2} = 1$, then $a \vee (b_{1} \wedge b_{2}) = 1$. 
Now, from the antecedent it follows that $(a \vee b_{1}) \cdot (a \vee b_{2}) = 1$ and 
using twice the distributive law of $\cdot$ with respect to $\vee$, 
we have that $(a \cdot a) \vee (a \cdot b_{2}) \vee (b_{1} \cdot a) \vee (b_{1} \cdot b_{2}) = 1$. 
Any subterm $t$ of the left-hand term is such that $t \leq a \vee (b_{1} \wedge b_{2})$.
\end{proof2}

Following \cite{Rau} and \cite{RZ}, we consider now the compound operation $\neg D$ and its relation to $B$. 
First, let us state the following fact. 

\begin{lem} \label{NDM}
 Let $\bf{A}\in \mathbb{RL}^D$ and $a, b \in A$. Then,  
 
 \emph{(i)} $\neg Da \leq a$,  
 
 \emph{(ii)} if $a \leq b$, then $Db \leq Da$ and $\neg Da \leq \neg Db$.
\end{lem}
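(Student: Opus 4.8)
The plan is to prove the two statements of Lemma~\ref{NDM} in order, using only the defining (simplified) equations of $\mathbb{RL}^D$, namely {\bf{(DI$^\prime$)}} $x \vee Dx \approx 1$ and {\bf{(DE$^\prime$)}}, together with basic properties of residuated lattices already collected in Lemma~\ref{GRL}. Part (i) asks for $\neg Da \leq a$. The natural route is to exploit {\bf{(DI$^\prime$)}}, which gives $a \vee Da = 1$. By Lemma~\ref{GRL}(vi), whenever $a \vee b = 1$ one has $\neg a \leq b$; here the roles are reversed, so I would instead apply that lemma to the pair $(Da, a)$, reading $Da \vee a = 1$ and concluding $\neg Da \leq a$ directly. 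This is essentially a one-line deduction once the correct instance of Lemma~\ref{GRL}(vi) is identified.

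For part (ii), assume $a \leq b$. I would first establish the antimonotonicity of $D$, i.e.\ $Db \leq Da$. The idea is to use the extremal (minimality) characterization of $D$ encoded in {\bf{(DE$^\prime$)}}: since $Db$ is the least element whose join with $b$ is $1$, it suffices to show that $a \vee Db = 1$, for then {\bf{(DE$^\prime$)}} applied to the pair $(a, Db)$ yields $Da \leq Db$; but I actually want $Db \leq Da$, so I must apply the minimality of $Db$ rather than of $Da$. Concretely, from $a \leq b$ I get $b \vee Da \geq a \vee Da = 1$, so $b \vee Da = 1$, and then {\bf{(DE$^\prime$)}} (with $a$ replaced by $b$ and $b$ by $Da$) gives $Db \leq Da$. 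Once $Db \leq Da$ is in hand, the second inequality $\neg Da \leq \neg Db$ follows immediately from the antimonotonicity of residual negation, i.e.\ Lemma~\ref{GRL}(iii).

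The main point to get right — and the only place where care is needed — is the bookkeeping of which variable plays which role in {\bf{(DE$^\prime$)}}, since the condition ``$a \vee b = 1$ implies $Da \leq b$'' is asymmetric and it is easy to instantiate it backwards. There is no genuine obstacle here: both claims reduce to a single application of the minimality condition together with the standard monotonicity lemmas, so the proof is short and routine once the instantiations are chosen correctly.
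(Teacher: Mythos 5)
Your proof is correct and follows essentially the same route as the paper's: part (i) is the same one-line application of Lemma~\ref{GRL}(vi) to $a \vee Da = 1$, and part (ii) uses the same chain $1 = a \vee Da \leq b \vee Da$ followed by the minimality condition {\bf{(DE)}} to get $Db \leq Da$, and then Lemma~\ref{GRL}(iii) for $\neg Da \leq \neg Db$. The instantiation of {\bf{(DE$^\prime$)}} you settle on (with $b$ and $Da$ in the roles of the antecedent pair) is exactly the one the paper uses.
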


\begin{proof2} (i) follows from $a \vee Da = 1$ using Lemma \ref{GRL}(vi). 

(ii) Assume $a \leq b$. Then, $1 = a \lor Da \leq b \lor Da$. 
Hence, by {\bf{(DE)}} we have $Db \leq Da$. 
Now, apply Lemma \ref{GRL}(iii) to get $\neg Da \leq \neg Db$. 
\end{proof2}

In \cite[Section 5]{CEB} the authors prove a result about iterations of the operation $\neg D$
in the context of meet-complemented distributive lattices with $D$. 
Once trivially adapted to  $\mathbb{RL}^D$, it is the following fact.  

\begin{pro} 
 \emph{(i)} For any natural $n > 0$, let $\mathbb{RL}^{D_n}$ be the subvariety of $\mathbb{RL}^D$ defined 
 by adding to those of $\mathbb{RL}^D$ the following equation:
 $$(\neg D)^{n+1} x  \approx (\neg D)^n x.$$ 
 Then, the sequence of varieties 
 $\mathbb{RL}^{D_1} \subset \mathbb{RL}^{D_2} \subset \ldots \subset \mathbb{RL}^{D_n} \subset \ldots $ 
 is strictly increasing. 
 
 \emph{(ii)} There are algebras of $\mathbb{RL}^D$ where none of the equations given in \emph{(i)} hold. 
\end{pro}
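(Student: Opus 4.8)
The plan is to read $\neg D$ as a monotone, contracting operator and to measure, for each element, how many iterations it takes to reach a fixed point. By Lemma \ref{NDM} we have $\neg D a \leq a$ and $\neg D$ is monotone, so for every $x$ the sequence $x \geq \neg D x \geq (\neg D)^2 x \geq \cdots$ is decreasing; moreover the fixed points of $\neg D$ are exactly the Boolean elements (if $\neg D a = a$ then $a \vee Da = 1$ by {\bf{(DI$^\prime$)}} and $a \wedge Da = 0$, so $Da$ complements $a$, and conversely a complement of $a$ equals $Da$). Call the least $k$ with $(\neg D)^{k+1}x = (\neg D)^k x$ the \emph{depth} of $x$. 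Membership of an algebra in $\mathbb{RL}^{D_n}$ then says precisely that every element has depth at most $n$. With this reading the inclusion $\mathbb{RL}^{D_n}\subseteq \mathbb{RL}^{D_{n+1}}$ is immediate: applying the unary term $\neg D$ to both sides of the identity $(\neg D)^{n+1}x \approx (\neg D)^n x$ yields $(\neg D)^{n+2}x \approx (\neg D)^{n+1}x$.

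For the strict inclusions I would exhibit, for each $n$, a finite algebra of depth exactly $n+1$. The natural source is finite Heyting algebras, which are residuated lattices with $\cdot = \wedge$ carrying the dual pseudocomplement $D$, hence lying in $\mathbb{RL}^D$. Concretely I would take the lattice $\mathcal{D}(P)$ of down-sets of a finite poset $P$, where $\neg U = \{p : \downarrow p \cap U = \emptyset\}$ and $DU = \downarrow(P\setminus U) = \{p : \uparrow p \not\subseteq U\}$, from which one computes the convenient closed form $\neg D U = \{p : \uparrow\downarrow p \subseteq U\}$. Taking $P$ to be the four-element fence $a < b > c < d$ (covers $a<b$, $c<b$, $c<d$), the down-set $U_0 = \{a,b,c\}$ satisfies $\neg D U_0 = \{a\}$ and $(\neg D)^2 U_0 = \emptyset = (\neg D)^3 U_0$, while a direct check shows every other element stabilises within two steps; hence $\mathcal{D}(P) \in \mathbb{RL}^{D_2}\setminus \mathbb{RL}^{D_1}$. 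More generally I would let $P_n$ be the fence $x_1 < x_2 > x_3 < \cdots$ on $2n$ vertices (so $P_1$ is the two-element chain, giving the Gödel chain $G_3$, and $P_2$ the fence above); the down-set obtained by deleting the last maximal vertex has depth exactly $n$, and nothing in $\mathcal{D}(P_n)$ exceeds depth $n$, so $\mathcal{D}(P_n) \in \mathbb{RL}^{D_n}\setminus\mathbb{RL}^{D_{n-1}}$. Reindexing gives strictness at every level. This general depth computation is the statement adapted from \cite[Section 5]{CEB}.

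For part (ii) I would use a single infinite algebra in which the depths are unbounded. The crucial observation is that $D$ exists in the \emph{full} down-set lattice $\mathcal{D}(P)$ of any poset $P$, finite or infinite, because $DU = \downarrow(P\setminus U)$ is the least down-set whose union with $U$ is $P$; thus $\mathcal{D}(P)$ is always a complete Heyting algebra in $\mathbb{RL}^D$, and the identity $\neg D U = \{p:\uparrow\downarrow p\subseteq U\}$ still holds. Taking $P$ to be the one-sided infinite fence $x_1 < x_2 > x_3 < x_4 > \cdots$, the truncated down-sets obtained by deleting an arbitrarily far vertex provide, for each $n$, an element of depth greater than $n$; hence every equation $(\neg D)^{n+1}x \approx (\neg D)^n x$ fails in this one algebra.

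The main obstacle is the depth computation for the fence family, namely showing that deleting the last vertex of $P_n$ really produces an element whose $\neg D$-orbit descends through $n$ distinct values and that no element descends further. I would handle it through the closed form $\neg D U = \{p : \uparrow\downarrow p \subseteq U\}$, tracking how each application of $\neg D$ peels off one end of the fence, and otherwise by importing the corresponding result of \cite{CEB}, whose meet-complemented distributive lattices are term-equivalent to these Heyting algebras.
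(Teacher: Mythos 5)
The paper offers no proof of this proposition at all: it is stated as a ``trivially adapted'' import of the corresponding result in \cite[Section 5]{CEB}, so any worked-out argument is necessarily a different route, and yours is sound. Your reduction to a depth count is correct: $\neg D$ is contracting and monotone by Lemma \ref{NDM}, its fixed points are exactly the Boolean elements (this is precisely the paper's later Lemma \ref{L-D}, and your sketch of it is fixable into a full proof), and applying the unary term $\neg D$ to both sides of $(\neg D)^{n+1}x \approx (\neg D)^n x$ does give the inclusions $\mathbb{RL}^{D_n} \subseteq \mathbb{RL}^{D_{n+1}}$. The closed form $\neg D U = \{p : \uparrow\downarrow p \subseteq U\}$ in a down-set lattice is right, and your four-element fence checks out: $U_0=\{a,b,c\}$ has depth $2$ while every other down-set has depth at most $1$ (since the fence is connected, the only complemented down-sets are $\emptyset$ and $P$, so every proper nonempty down-set must erode to $\emptyset$); the one-sided infinite fence likewise settles (ii). One point deserves emphasis: the obstacle you flag --- that no element of $\mathcal{D}(P_n)$ has depth exceeding $n$ --- is not optional polish but load-bearing, because without it you would only conclude that each $\mathbb{RL}^{D_n}$ is properly contained in \emph{some} later variety of the chain, i.e.\ infinitely many strict inclusions rather than strictness at every consecutive step. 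Fortunately your erosion picture closes it: a proper down-set of $P_n$ omits some maximal vertex; each application of $\neg D$ deletes exactly those $p$ with $\uparrow\downarrow p$ meeting the current complement, so the eroded up-set grows by at least two fence vertices per step in each available direction, and hence every proper down-set vanishes within $n$ steps, the worst case being a hole at one end --- which is exactly your $U_0$. In sum, where the paper buys brevity by citation, your construction buys a self-contained proof with explicit finite witnesses for (i) and a single infinite witness for (ii), at the price of that (true, and justifiable by your own method) depth bound.
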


Next, consider the following example of a Heyting algebra where $B$ exists but $D$ does not. 

\begin{example} \label{ByesDno}
 $B$ exists in the Heyting algebra $1+(\mathbb{N}\times \mathbb{N})^{\partial}$ of Figure \ref{1+(NxN)op}, 
 where $(\mathbb{N}\times \mathbb{N})^{\partial}$ is obtained `turning upside down' the partial order $\mathbb{N}\times \mathbb{N}$. 
 In that Heyting algebra, $Ba=1$ if $a=1$ else $Ba=0$. 
 However, $D$ does not exist for the elements $(0,n)$ and $(n,0)$, with non-zero $n$. 
\end{example}

\begin{figure} [ht]
\begin{center}

\begin{tikzpicture}

  \tikzstyle{every node}=[draw, circle, fill=white, minimum size=3pt, inner sep=0pt, label distance=1mm]
 
    \draw (0,0)
    
           ++(45:.5cm)	node (02) [label=left:(0{,}2)] {}
        -- ++(45:1)	node (01) [label=left:(0{,}1)] {}
        -- ++(45:1)	node (00) [] {}
        -- ++(315:1)	node (10) [label=right:(1{,}0)] {}
        -- ++(315:1)	node (20) [label=right:(2{,}0)] {}
        -- ++(225:1)	node (21) [] {}
        -- ++(135:1)	node (11) [] {}        
        -- ++(225:1)	node (12) [] {};
       
    \draw (02)--(12);
    \draw (01)--(11); 
    \draw (10)--(11);
       
    \draw [dotted, very thick] (02)--(0,0); 
    \draw [dotted, very thick] (12)--(315:1);
    \draw [dotted, very thick] (12)--(333:1.5);
    \draw [dotted, very thick] (21)--(342:2.25); 
    \draw [dotted, very thick] (21)--(345:2.95);
    \draw [dotted, very thick] (20)--(359:3.6);
    
    \draw (315:2.55) node [] {};
    
\end{tikzpicture}

\end{center}
\caption{\label{1+(NxN)op} The residuated lattice $1+(\mathbb{N}\times \mathbb{N})^{\partial}$, 
where $B$ exists but $D$ does not}
\end{figure}
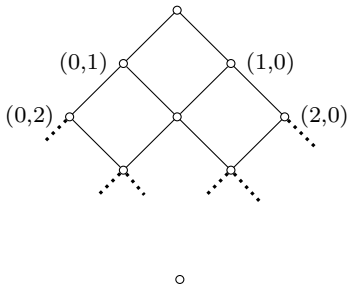

There are also $\mathbb{RL}$-algebras where $D$ exists, 
but $B$ does not (see the end of Section 2 of \cite{CEB} for an example of a Heyting algebra).  
Note that in Franco Montagna's example, neither $B$ nor $D$ exist. 

In the following case, existence of $D$ implies existence of $B$.

\begin{pro} \label{inStoneDgivesB}
Let $\bf{A} \in \mathbb{RL}$ and $\neg a \vee \neg \neg a = 1$, for all $a \in A$. 
Then, if $D$ exists in {\bf{A}}, then $B$ also exists in {\bf{A}}, with $B = \neg D$.
\end{pro}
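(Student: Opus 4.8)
The plan is to prove existence of $B$ by exhibiting an explicit witness: I will show that the element $\neg Da$ satisfies the three characterizing conditions {\bf{(BE1)}}, {\bf{(BE2)}}, and {\bf{(BI)}} of \emph{the greatest Boolean below $a$}. Since those three conditions characterize $Ba$, verifying them for $\neg Da$ simultaneously establishes that $B$ exists at $a$ and that $Ba = \neg Da$; as $a$ is arbitrary, this gives $B = \neg D$ on all of $A$.

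The first two conditions are immediate. Condition {\bf{(BE1)}}, namely $\neg Da \leq a$, is exactly Lemma \ref{NDM}(i) and needs no further argument. Condition {\bf{(BE2)}}, namely that $\neg Da$ is Boolean, is where the standing hypothesis enters: instantiating $\neg x \vee \neg\neg x = 1$ at $x = Da$ yields $\neg Da \vee \neg\neg Da = 1$ directly, so $\neg Da$ is Boolean by Lemma \ref{BL}. This is the only place the Stone-type assumption is used, and it is precisely what guarantees the candidate is complemented.

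The remaining condition {\bf{(BI)}} is the crux and the step I expect to be the main obstacle. Let $b$ be Boolean with $b \leq a$; I must show $b \leq \neg Da$. The key move is to turn $b \leq a$ into a statement about a join equalling $1$: since $b$ is Boolean we have $b \vee \neg b = 1$ (Lemma \ref{BL}), and $b \leq a$ then gives $a \vee \neg b = 1$ by monotonicity of $\vee$. This lets me invoke the minimality of the join-complement in the form {\bf{(DE$^\prime$)}}, which delivers $Da \leq \neg b$. Finally I transfer this back across the negation: by residuation and commutativity of $\cdot$ the inequality $Da \leq \neg b$ is equivalent to $b \leq \neg Da$ (equivalently, apply antitonicity of $\neg$ via Lemma \ref{GRL}(iii) and use $\neg\neg b = b$ from Proposition \ref{infmon}(v)), which is the desired conclusion. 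The whole argument is short once one spots the routing through $a \vee \neg b = 1$; no lengthy computation is involved, so the difficulty is purely in identifying that minimality of $Da$ is the right tool for {\bf{(BI)}}.
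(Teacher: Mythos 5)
Your proof is correct and takes essentially the same approach as the paper: exhibit $\neg Da$ as the witness and verify {\bf{(BE1)}} via Lemma \ref{NDM}(i), {\bf{(BE2)}} by instantiating the Stone hypothesis at $Da$, and {\bf{(BI)}} by deriving $Da \leq \neg b$ and residuating. The only (immaterial) difference is inside {\bf{(BI)}}: you get $Da \leq \neg b$ by applying {\bf{(DE$^\prime$)}} to $a \vee \neg b = 1$, while the paper routes through antitonicity of $D$, i.e. $Da \leq Db \leq \neg b$.
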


\begin{proof2}
Let $a \in A$. 
Then, $\neg Da$ exists in $A$. 
We have to see (i) $\neg Da\leq a$, (ii) $\neg Da \vee \neg \neg Da = 1$, and 
(iii) if $b \leq a$ and $b \vee \neg b = 1$, then $b \leq \neg Da$. 
Now, (i) holds as seen in Lemma \ref{NDM}(i) and 
(ii) follows from the hypothesis that $\neg a \vee \neg \neg a = 1$, for any $a \in A$. 
To see (iii), suppose (iv) $b \leq a$ and (v) $b \vee \neg b = 1$. 
Due to Lemma \ref{NDM}(ii), we have that (iv) implies $Da \leq Db$ and, using {\bf{(DE)}}, (v) implies $Db \leq \neg b$. 
So, by $\leq$-transitivity it follows that $Da \leq \neg b$. 
Then, in a residuated lattice we have $b \leq \neg Da$. 
\end{proof2}

\begin{rem}
Given the conditions of Proposition \ref{inStoneDgivesB}, 
taking any of the coatoms in the algebra of Example \ref{5}, 
it is easy to see that $Dx \approx \neg Bx$ does not hold. 
Also, the reciprocal of Proposition \ref{inStoneDgivesB} is not the case, 
as the algebra in Example \ref{ByesDno} satisfies the equation $\neg x \vee \neg \neg x \approx 1$ and 
$B$ exists in that algebra, but $D$ does not exist. 
\end{rem}

Taking into account De Morgan laws valid in any MTL-algebra 
\footnote{Notice that not all instances of De Morgan laws are valid in the variety of MTL-algebras, 
for instance the equations  $\neg(x \land y) \approx \neg x \lor \neg y$ and $\neg(x \lor y) \approx \neg x \land \neg y$  are valid, 
but $x \land y \approx \neg(\neg x \lor \neg y)$ is not.}, 
we can easily obtain the following consequence of the previous proposition.

\begin{cor}
Let {\bf{A}} be a SMTL-algebra, 
i.e. an MTL-algebra such that for all $a \in A$, $a \wedge \neg a = 0$. 
Then, for any $a \in A$, if $Da$ exists, then so does $Ba$, and $Ba = \neg Da$.
\end{cor}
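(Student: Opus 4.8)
The plan is to reduce the corollary to Proposition \ref{inStoneDgivesB} by checking that every SMTL-algebra satisfies the hypothesis of that proposition, namely that $\neg a \vee \neg \neg a = 1$ holds for all $a \in A$. Once this is in hand, the construction in the proof of Proposition \ref{inStoneDgivesB}—which, for a fixed $a$ whose $Da$ exists, sets $Ba = \neg Da$ and verifies {\bf (BE1)}, {\bf (BE2)}, {\bf (BI)}—carries over and delivers the conclusion.

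First I would verify the hypothesis. By definition, a SMTL-algebra satisfies $a \wedge \neg a = 0$ for every $a \in A$. As recorded in the footnote, the De Morgan law $\neg(x \wedge y) \approx \neg x \vee \neg y$ is valid in every MTL-algebra. Instantiating it with $y = \neg a$ gives
$$\neg a \vee \neg \neg a = \neg(a \wedge \neg a).$$
Since $a \wedge \neg a = 0$ in a SMTL-algebra and $\neg 0 = 1$, the right-hand side equals $1$, so $\neg a \vee \neg \neg a = 1$ for all $a \in A$, which is exactly the hypothesis required by Proposition \ref{inStoneDgivesB}.

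Finally I would invoke Proposition \ref{inStoneDgivesB}. Although that proposition is phrased under the assumption that $D$ exists throughout $\bf A$, its proof argues at a single element: from the existence of $Da$ it forms $\neg Da$ and checks {\bf (BE1)} via Lemma \ref{NDM}(i), {\bf (BE2)} via the instance of $\neg a \vee \neg \neg a = 1$ taken at $Da$, and {\bf (BI)} via Lemma \ref{NDM}(ii) together with {\bf (DE)}. Consequently, for any $a$ for which $Da$ exists, $Ba$ exists and equals $\neg Da$, which is the pointwise statement of the corollary. I do not anticipate a real obstacle here: the only points demanding mild care are the correct instantiation $y = \neg a$ in the De Morgan law and the observation that the argument underlying Proposition \ref{inStoneDgivesB} is pointwise, so that the local existence of $Da$ already suffices.
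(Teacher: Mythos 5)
Your first step is exactly the paper's proof: the remark preceding the corollary invokes precisely the MTL-valid De Morgan identity $\neg(x \wedge y) \approx \neg x \vee \neg y$, so that in any SMTL-algebra $\neg a \vee \neg\neg a = \neg(a \wedge \neg a) = \neg 0 = 1$, and then appeals to Proposition \ref{inStoneDgivesB}. That part is correct and is the intended argument.

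There is, however, a genuine gap in your justification of the pointwise reading. You assert that the proof of Proposition \ref{inStoneDgivesB} ``argues at a single element,'' but its verification of \textbf{(BI)} does not: it applies Lemma \ref{NDM}(ii) to get $Da \leq Db$ from $b \leq a$, and \textbf{(DE)} to get $Db \leq \neg b$ from $b \vee \neg b = 1$, and both steps presuppose that $Db$ exists, where $b$ ranges over the Boolean elements below $a$. In the setting of the corollary only the existence of $Da$ is assumed, so this argument is unavailable as written. (The paper itself glosses over the same point, passing silently from the globally stated proposition to the pointwise corollary, so you were right to flag the issue --- but the resolution you offer does not hold up.) The gap is easy to close without ever mentioning $Db$: if $b \leq a$ and $b \vee \neg b = 1$, then $1 = b \vee \neg b \leq a \vee \neg b$, so $a \vee \neg b = 1$; by \textbf{(DE$^\prime$)} applied at $a$ this yields $Da \leq \neg b$, and then Lemma \ref{GRL}(iii) and (vii) give $b \leq \neg\neg b \leq \neg Da$. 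With this substitute for the \textbf{(BI)} check, the remaining two conditions are indeed local to $a$: \textbf{(BE1)} follows from $a \vee Da = 1$ and Lemma \ref{GRL}(vi) (this is all that the proof of Lemma \ref{NDM}(i) uses), and \textbf{(BE2)} is the SMTL hypothesis instantiated at the element $Da$. So your conclusion stands, but only after repairing the \textbf{(BI)} step.
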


\begin{lem} \label{L-D} 
 Let $\bf{A} \in \mathbb{RL}^D$ and $a \in A$. 
 Then, the following are equivalent:

 \vspace{5pt}
 
 \emph{(i)} $a$ is Boolean, 
 
 \emph{(ii)} $\neg Da = a$, 
 
 \emph{(iii)} $Da = \neg a$. 
\end{lem}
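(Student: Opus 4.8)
The plan is to prove the chain of equivalences by pivoting on the observation that the inequality $\neg a \le Da$ holds automatically in every $\mathbb{RL}^D$-algebra. Indeed, from the defining equation {\bf{(DI$^\prime$)}} we have $a \vee Da = 1$, and applying Lemma \ref{GRL}(vi) to this yields $\neg a \le Da$. With this half-inequality available for free, the content of each equivalence reduces to establishing a reverse inequality or a Booleanness condition, and the standard facts about complemented elements gathered in Lemma \ref{BL} and Proposition \ref{infmon} do the rest.

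First I would establish (i) $\Leftrightarrow$ (iii). For (i) $\Rightarrow$ (iii): if $a$ is Boolean then $a \vee \neg a = 1$ by Lemma \ref{BL}, so {\bf{(DE$^\prime$)}} gives $Da \le \neg a$; combined with the free inequality $\neg a \le Da$ this yields $Da = \neg a$. For (iii) $\Rightarrow$ (i): from $Da = \neg a$ together with {\bf{(DI$^\prime$)}} we get $a \vee \neg a = a \vee Da = 1$, so $a$ is Boolean by Lemma \ref{BL}.

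Next I would close the loop with (i) $\Leftrightarrow$ (ii). For (i) $\Rightarrow$ (ii): assuming $a$ Boolean, the already-proved equivalence gives $Da = \neg a$, hence $\neg Da = \neg \neg a = a$, where the last step uses $\neg \neg a = a$ for Boolean $a$ (Proposition \ref{infmon}(v)). The only mildly delicate direction is (ii) $\Rightarrow$ (i). Here the key trick is to use the hypothesis $\neg Da = a$ to rewrite {\bf{(DI$^\prime$)}}: since $a \vee Da = 1$ becomes $\neg Da \vee Da = 1$, Lemma \ref{BL} tells us that $Da$ itself is Boolean. Then Proposition \ref{infmon}(v) applies to $Da$, giving $\neg \neg Da = Da$; taking $\neg$ of the hypothesis $\neg Da = a$ gives $\neg \neg Da = \neg a$, so $Da = \neg a$, which is (iii), and hence (i) follows from the first equivalence.

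I expect the main obstacle to be exactly the step (ii) $\Rightarrow$ (i): one must notice that substituting $\neg Da$ for $a$ inside the always-true identity $a \vee Da = 1$ converts it into the statement that $Da$ is complemented, which is precisely what unlocks the double-negation cancellation. Everything else is a routine combination of {\bf{(DI$^\prime$)}}, {\bf{(DE$^\prime$)}}, and the Boolean-element facts of Lemma \ref{GRL}, Lemma \ref{BL}, and Proposition \ref{infmon}.
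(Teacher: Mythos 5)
Your proof is correct, but it is organized differently from the paper's. The paper proves the cycle (i) $\Rightarrow$ (ii) $\Rightarrow$ (iii) $\Rightarrow$ (i): for (i) $\Rightarrow$ (ii) it combines $Da \leq \neg a$ (from {\bf{(DE)}}) with a residuation step to get $a \leq \neg Da$, and invokes Lemma \ref{NDM}(i) for the reverse inequality $\neg Da \leq a$; for (ii) $\Rightarrow$ (iii) it uses the general law $Da \leq \neg\neg Da$ (Lemma \ref{GRL}(vii)) together with $\neg a \leq Da$ from Lemma \ref{GRL}(vi). You instead use (i) as a hub: you prove (i) $\Leftrightarrow$ (iii) directly from {\bf{(DE$^\prime$)}} and the ``free'' inequality $\neg a \leq Da$, and you reduce both directions of (i) $\Leftrightarrow$ (ii) to (iii). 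Your treatment of (ii) $\Rightarrow$ (i) is the genuinely distinct step: rather than appealing to $Da \leq \neg\neg Da$, you substitute the hypothesis $\neg Da = a$ into $a \vee Da = 1$ to conclude via Lemma \ref{BL} that $Da$ itself is Boolean, so that double negation cancels exactly by Proposition \ref{infmon}(v), recovering (iii). The two routes are of essentially equal length and draw on the same small pool of facts; the paper's version has the mild advantage of needing only the lax inequality $Da \leq \neg\neg Da$ rather than Booleanness of $Da$, while yours avoids Lemma \ref{NDM}(i) and any explicit residuation step, staying entirely within Lemma \ref{GRL}(vi), Lemma \ref{BL}, Proposition \ref{infmon}, and the two $D$-equations.
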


\begin{proof2}
 (i) $\Rightarrow$ (ii) Suppose $a \vee \neg a = 1$. 
 Then, using {\bf{(DE)}}, $Da \leq \neg a$. 
 Then, $a \leq \neg Da$. 
 Now, by Lemma \ref{NDM}(i) we have $\neg Da \leq a$. 
 So, $\neg Da = a$.
 
 (ii) $\Rightarrow$ (iii) Suppose $\neg Da = a$. 
 Then, $\neg \neg Da = \neg a$. 
 As $Da \leq \neg \neg Da$, we have that $Da \leq \neg a$. 
 Now, by {\bf{(DI)}}, $a \vee Da = 1$. 
 So, also $\neg a \leq Da$. 
 Then, $Da = \neg a$.
 
 (iii) $\Rightarrow$ (i) Suppose $Da = \neg a$. 
 As using {\bf{(DI)}} we have $a \vee Da = 1$, it follows that $a \vee \neg a = 1$. 
\end{proof2}

As a direct consequence we have the following fact. 

\begin{cor}
Let {\bf{A}} be a residuated lattice where both $B$ and $D$ exist. 
Then, $Da \leq \neg Ba$, for all $a \in A$. 
Equivalently, $Ba \leq \neg Da$, for all $a \in A$.
\end{cor}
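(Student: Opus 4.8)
The plan is to exploit that $Ba$ is itself a Boolean element and to route the argument through $D(Ba)$ rather than through $Da$ directly, combining the antitonicity of $D$ from Lemma~\ref{NDM} with the characterization of Boolean elements in Lemma~\ref{L-D}. The guiding observation is that, although we know little about $Da$ for a general argument $a$, we know exactly what $D$ does on a Boolean element: there it coincides with $\neg$.

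First I would record the two facts about $Ba$ that are immediate from its defining conditions: $Ba$ is Boolean, by {\bf{(BE2)}}, and $Ba \leq a$, by {\bf{(BE1)}}. Applying Lemma~\ref{NDM}(ii) to the inequality $Ba \leq a$ (with $Ba$ as the smaller element) yields $Da \leq D(Ba)$, since $D$ is order-reversing. Next, because $Ba$ is Boolean, the implication (i)$\Rightarrow$(iii) of Lemma~\ref{L-D}, instantiated at $Ba$, gives $D(Ba) = \neg Ba$. Chaining these two steps produces $Da \leq D(Ba) = \neg Ba$, which is the asserted inequality.

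For the \emph{equivalently} clause I would simply invoke residuation: in any residuated lattice $x \leq \neg y$ holds iff $x \cdot y = 0$, and by commutativity of $\cdot$ this condition is symmetric in $x$ and $y$, so $Da \leq \neg Ba$ and $Ba \leq \neg Da$ are interchangeable. I do not expect any genuine obstacle here, as the result is a direct corollary; the only point requiring mild care is the decision to apply $D$ to $Ba$, using Lemma~\ref{NDM}(ii) in the correct (antitone) direction, rather than attempting to evaluate $Da$ itself, since $D$ need not simplify on non-Boolean arguments.
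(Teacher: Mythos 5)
Your proof is correct and follows exactly the paper's argument: apply the antitonicity of $D$ (Lemma~\ref{NDM}(ii)) to $Ba \leq a$ and then use Lemma~\ref{L-D} on the Boolean element $Ba$ to identify $D(Ba)$ with $\neg Ba$. Your residuation argument for the \emph{equivalently} clause ($x \leq \neg y$ iff $x \cdot y = 0$ iff $y \leq \neg x$) is also correct, and in fact supplies a justification that the paper leaves implicit.
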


\begin{proof2}
Let $a \in A$. We have that $Ba \leq a$. 
Hence, using Lemma \ref{NDM}(ii), $Da \leq DBa$. 
Now, since $Ba$ is Boolean, using Lemma \ref{L-D} it follows that $Da \leq \neg Ba$. 
\end{proof2}

\begin{rem}
The equality $B \approx \neg D$ does not hold. 
Indeed, consider the join-irreducible coatom $c$ in the Heyting algebra in Figure \ref{rn8}, where $0 = Bc < \neg Dc$. 
\end{rem}

\begin{figure} [ht]
\begin{center}

\begin{tikzpicture}

    \tikzstyle{every node}=[draw,circle,fill=white,minimum size=3pt, inner sep=0pt, label distance=1mm]

    \draw (0,0) node (1) [] {}
        -- ++(315:1cm) node (rc) [label=right:$c$] {}
        -- ++(225:1cm) node (d) [] {}
        -- ++(315:1cm) node (nD) [label=right:$\neg Dc$] {}
        -- ++(225:1cm) node (0) [label=right:$Bc$] {}
        -- ++(135:1cm) node (rb) [] {}
        -- ++(135:1cm) node (c) [label=left:$Dc$] {}
        -- ++(45:1cm) node (e) [] {}
        -- (1);

        \draw (d) -- (e); 
        \draw (d) -- (rb);

        \end{tikzpicture}
\end{center}
\caption{\label{rn8} Behaviour of $B$ and $D$ in a coatom of a residuated lattice}
\end{figure}
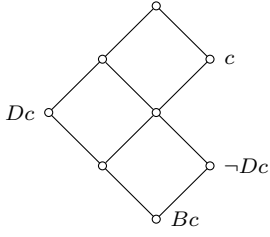

\begin{lem} \label{nmB} 
 Let {\bf{A}} be a $\mathbb{RL}^D$-algebra and $a, b \in A$. 
 We have that if $b \leq a$ and $b \vee \neg b = 1$, then $b \leq \neg Da$.
\end{lem}

\begin{proof2}
 Suppose $b \leq a$. 
 Then, $\neg Db \leq \neg Da$. 
 Now, using the hypothesis $b \vee \neg b = 1$ and Lemma \ref{L-D}, we have that $\neg Db = b$. 
 So, $b \leq \neg Da$.
\end{proof2}

\begin{pro}
Let {\bf{A}} be a $\mathbb{RL}^D$-algebra, let $a \in A$, 
and let us have a finite number of elements $b \in A$ such that $b \leq a$. 
Then, $Ba = (\neg D)^{n}a$, for some $n \in \mathbb{N}$. 
\end{pro}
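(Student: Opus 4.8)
The plan is to track the iterates of the compound operator $\neg D$ applied to $a$, show they form a non-increasing chain that must stabilise by finiteness, and identify the stable value as the greatest Boolean element below $a$. First I would record two facts from Lemma \ref{NDM}: the operator $\neg D$ is order-preserving (part (ii)), and it is deflationary in the sense that $\neg D a \leq a$ (part (i)). Applying $\neg D$ repeatedly to the inequality $\neg D a \leq a$ and using monotonicity gives $(\neg D)^{k+1} a \leq (\neg D)^k a$ for every $k \geq 0$; hence the sequence $a \geq \neg D a \geq (\neg D)^2 a \geq \cdots$ is non-increasing, and every term lies below $a$.

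Next I would invoke the finiteness hypothesis. Since the whole sequence is contained in the finite set $\{ b \in A : b \leq a \}$ and is non-increasing, it must stabilise: there is some $n \in \mathbb{N}$ with $(\neg D)^{n+1} a = (\neg D)^n a$. Writing $c = (\neg D)^n a$, this stabilisation reads $\neg D c = c$, so Lemma \ref{L-D} immediately yields that $c$ is Boolean. Together with $c \leq a$, this shows $c$ is a Boolean element below $a$.

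It then remains to check that $c$ is the \emph{greatest} such element, which will simultaneously establish that $Ba$ exists and equals $c = (\neg D)^n a$. Let $b$ be any Boolean element with $b \leq a$. I would argue by induction that $b \leq (\neg D)^k a$ for all $k \geq 0$: the base case $k=0$ is the hypothesis $b \leq a$, and for the inductive step, if $b \leq (\neg D)^k a$ and $b$ is Boolean, then Lemma \ref{nmB} (applied with the current iterate $(\neg D)^k a$ in place of the upper bound $a$) gives $b \leq \neg D((\neg D)^k a) = (\neg D)^{k+1} a$. Taking $k = n$ yields $b \leq c$, so $c$ dominates every Boolean element below $a$, as required.

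There is no deep obstacle here; the argument is a fixed-point-via-finiteness computation resting entirely on the three lemmas about $\neg D$. The step requiring the most care is the universal-property induction: Lemma \ref{nmB} must be re-applied at each stage with the current iterate as the new upper bound, which is exactly what lets the single-step bound $b \leq \neg D a$ bootstrap into $b \leq (\neg D)^k a$ for all $k$. One should also bear in mind that the conclusion asserts the \emph{existence} of $Ba$, so the proof is genuinely producing the maximum $c$ rather than manipulating an operator assumed to be present in the signature.
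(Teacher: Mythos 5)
Your proof is correct and takes essentially the same route as the paper's own (much terser) argument: iterate $\neg D$ below $a$, use the finiteness hypothesis to reach a fixed point, identify that fixed point as Boolean via Lemma \ref{L-D}, and use Lemma \ref{nmB} to guarantee that no Boolean element below $a$ is ever lost along the way. The only difference is presentational: you make explicit the stabilisation of the non-increasing sequence and the induction establishing maximality, both of which the paper leaves implicit in its phrase ``repeating the procedure we will find the first Boolean below $a$.''
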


\begin{proof2}
 In the case $a$ is Boolean, $Ba = (\neg D)^{0}a$. 
 In the case $a$ is not Boolean, take $\neg Da$. 
 Now, Lemma \ref{nmB} says we will not be missing Boolean elements. 
 Repeating the procedure we will find the first Boolean below $a$.
\end{proof2}

\begin{pro} 
 (i) In every algebra of $\mathbb{RL}^{D_n}$, for any natural number $n \geq 0$, 
 $B$ exists, with $B = (\neg D)^n$. 
 
 (ii) There are $\mathbb{RL}^D$-algebras where $B$ does not exist. 
\end{pro}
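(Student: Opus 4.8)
The plan is to treat the two parts separately: part (i) is a direct verification that the term $(\neg D)^n$ meets the defining conditions of the greatest-Boolean-below operator, while part (ii) is an appeal to a counterexample.

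For part (i), I would show that $(\neg D)^n$ satisfies the three characterizing conditions \textbf{(BE1)}, \textbf{(BE2)}, \textbf{(BI)}, from which $B=(\neg D)^n$ follows. For \textbf{(BE1)} I would invoke Lemma~\ref{NDM}(i), which gives $\neg D c\leq c$ for every $c$; applying it along the descending chain $a\geq \neg Da\geq (\neg D)^2 a\geq\cdots$ yields $(\neg D)^n a\leq a$. For \textbf{(BE2)}, setting $c=(\neg D)^n a$, the defining equation of $\mathbb{RL}^{D_n}$ gives $\neg D c=(\neg D)^{n+1}a=(\neg D)^n a=c$, so $c$ is Boolean by the equivalence $\neg D c=c \Leftrightarrow c$ Boolean of Lemma~\ref{L-D}; this is exactly $c\vee\neg c\approx 1$. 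For \textbf{(BI)} I would argue by induction on $k$ that every Boolean $b\leq a$ satisfies $b\leq (\neg D)^k a$: the base case $k=0$ is the hypothesis $b\leq a$, and the inductive step is precisely Lemma~\ref{nmB} applied with $(\neg D)^k a$ in place of $a$, giving $b\leq\neg D\big((\neg D)^k a\big)=(\neg D)^{k+1}a$. Taking $k=n$ closes the argument, and so $(\neg D)^n$ is the greatest Boolean below.

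For part (ii), I would exhibit an algebra of $\mathbb{RL}^D$ together with an element $a$ whose set of Boolean lower bounds has no greatest element. The Heyting algebra of \cite{CEB}, already cited earlier in this section as an example where $D$ exists but $B$ does not, serves this purpose directly. The point to confirm is that $D$ is total on the chosen algebra while the supremum defining $Ba$ fails to exist for the relevant $a$; concretely, one checks that the descending chain $(\neg D)^k a$ never stabilizes to a Boolean element, so that by the induction used in \textbf{(BI)} no single Boolean element can dominate all Boolean lower bounds of $a$.

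The routine part is part (i); the only mild subtlety there is that the chain $(\neg D)^k a$ in \textbf{(BI)} must be handled uniformly over all $k$, which Lemma~\ref{nmB} supplies. The genuine obstacle is part (ii): one must produce an $\mathbb{RL}^D$-algebra where $D$ is everywhere defined yet the relevant suprema of Boolean elements fail, and verify that no Boolean element can serve as $Ba$. This is exactly where the non-stabilizing behaviour of the iterates $(\neg D)^k$ is essential, guaranteed by the earlier observation that some $\mathbb{RL}^D$-algebras satisfy none of the equations $(\neg D)^{n+1}x\approx(\neg D)^n x$.
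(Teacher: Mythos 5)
Your part (i) is essentially the paper's own proof. The paper checks the same three conditions: \textbf{(BE1)} by iterating $\neg Da \leq a$ (Lemma \ref{NDM}(i)) with transitivity of $\leq$; \textbf{(BE2)} by applying Lemma \ref{L-D} to the hypothesis $(\neg D)^{n+1} = (\neg D)^n$, exactly as you do; and \textbf{(BI)} by the same induction, except that where you invoke Lemma \ref{nmB} at each step, the paper repeats that lemma's two-line argument inline ($b \leq a$ gives $\neg Db \leq \neg Da$, and $b \vee \neg b = 1$ gives $\neg Db = b$ via Lemma \ref{L-D}), iterated $n$ times. For part (ii), the paper does exactly what you do: it cites the Heyting algebra at the end of Section 2 of \cite{CEB} and gives no further detail.

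There is, however, a genuine flaw in the criterion you offer for verifying part (ii). You claim that if the chain $(\neg D)^k a$ never stabilizes to a Boolean element, then ``no single Boolean element can dominate all Boolean lower bounds of $a$''. This inference is invalid, and the paper itself refutes it: the proposition on complete $\mathbb{RL}^D$-algebras later in this same section shows that in any complete $\mathbb{RL}^D$-algebra, $Ba$ exists and equals $\bigwedge \{(\neg D)^n a : n \in \mathbb{N}\}$. In such an algebra the chain of iterates may be strictly decreasing forever, yet its infimum is Boolean and is the greatest Boolean element below $a$; so non-stabilization of the iterates does not imply non-existence of $B$. What must actually be verified in the algebra of \cite{CEB} is that the set of Boolean elements below the chosen $a$ has no maximum (compare the explicit verification of this kind in Montagna's example, Proposition \ref{F}). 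Since both you and the paper ultimately delegate this verification to \cite{CEB}, your overall structure is sound, but the ``essential'' mechanism you describe would not close part (ii) on its own.
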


\begin{proof2}
 (i) It suffices to see that $(\neg Dx)^n$ satisfies {\bf{(BE1)}}, {\bf{(BE2)}}, and {\bf{(BI)}}. 
 It always satifies {\bf{(BE1)}}, as it is easily seen by induction using $\neg Da \leq a$ and $\leq$-transitivity. 
 We get {\bf{(BE2)}} applying Lemma \ref{L-D} on the hypothesis that $(\neg D)^{n+1}=(\neg D)^n$. 
 Finally, to get {\bf{(BI)}}, suppose both (i) $b \leq a$ and (ii) $b \vee \neg b =1$. 
 From (i) it follows (iii) $\neg Db \leq \neg Da$. 
 From (ii), using Lemma \ref{L-D}, we get (iv) $\neg Db = b$. 
 From (iii) and (iv) we get $b \leq \neg Da$. 
 Repeat the argument $n$ times to get $b \leq (\neg D)^na$. 
 
 (ii) cf. end of Section 2 in \cite{CEB}. 
\end{proof2}

In the next proposition we will use the following  De Morgan properties for  $\neg$ and $D$. 
In the proof we use the abreviation $\{x_i\}$ for $\{x_i \in A: i \in I\}$.

\begin{lem} \label{DM}
 Let {\bf{A}} be a complete $\mathbb{RL}^D$-algebra. 
 Then, both (i) $\neg \bigvee \{ x_{i} \} = \bigwedge \{ \neg x_{i} \}$ and 
 (ii) $D \bigwedge \{ x_{i} \} = \bigvee \{ D x_{i} \}$. 
\end{lem}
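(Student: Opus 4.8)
The plan is to prove the two De Morgan identities separately, each as a pair of opposite inclusions. For (i), the inclusion $\neg\bigvee\{x_i\}\le\bigwedge\{\neg x_i\}$ is immediate: each $x_i\le\bigvee\{x_i\}$, so by antitonicity of $\neg$ (Lemma \ref{GRL}(iii)) we get $\neg\bigvee\{x_i\}\le\neg x_i$ for every $i$, whence the meet bound. For the reverse inclusion I would show directly that $(\bigwedge\{\neg x_i\})\cdot(\bigvee\{x_i\})=0$, which by residuation is precisely $\bigwedge\{\neg x_i\}\le(\bigvee\{x_i\})\to 0=\neg\bigvee\{x_i\}$. The essential ingredient here is that $\cdot$ distributes over arbitrary joins in a complete residuated lattice (because $a\cdot(-)$ is the left adjoint of $a\to(-)$ and hence preserves all existing joins); this lets me rewrite the product as $\bigvee_i[(\bigwedge_j\neg x_j)\cdot x_i]$, and each summand is below $\neg x_i\cdot x_i=0$ by monotonicity of $\cdot$.

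For (ii), write $m=\bigwedge\{x_i\}$ and $s=\bigvee\{Dx_i\}$. The inclusion $s\le Dm$ is the easy one: from $m\le x_i$ and antitonicity of $D$ (Lemma \ref{NDM}(ii)) I get $Dx_i\le Dm$ for every $i$, and taking the join gives $s\le Dm$. The reverse inclusion $Dm\le s$ is where the work lies. Since $Dm=\min\{b:m\vee b=1\}$, by \textbf{(DE$^\prime$)} it suffices to establish the single equality $m\vee s=1$.

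The key step, and the one I expect to be the main obstacle, is proving $m\vee s=1$ without forming an (undefined) infinite monoidal product of the identities $x_i\vee Dx_i=1$, which is what the finite case would suggest. My plan is to route through $Ds$ instead. For each $i$ we have $Dx_i\le s$, so antitonicity of $D$ gives $Ds\le DDx_i$, and since $DDx\preccurlyeq x$ holds in $\mathbb{RL}^D$ (the inequality derived earlier from \textbf{(DE$^\prime$)} and \textbf{(DI$^\prime$)}) we obtain $Ds\le DDx_i\le x_i$ for every $i$; hence $Ds\le m$. Now \textbf{(DI)} gives $s\vee Ds=1$, and because $Ds\le m$ this yields $1=s\vee Ds\le s\vee m$, that is $m\vee s=1$. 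Applying \textbf{(DE$^\prime$)} then gives $Dm\le s$, and together with $s\le Dm$ this completes (ii).

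It is worth noting that the two parts are proved independently and by quite different means: (i) rests on the quantale distributivity of $\cdot$ over arbitrary joins together with antitonicity of $\neg$, whereas (ii) uses only the order-theoretic behaviour of $D$ encoded in \textbf{(DI)}, \textbf{(DE$^\prime$)} and the derived $DDx\le x$, so that no appeal to lattice distributivity or to part (i) is needed.
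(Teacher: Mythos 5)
Your proof is correct, and on the one inequality where real work is needed it takes a genuinely different route from the paper's. For (i) the paper gives no argument at all (it just says the identity ``is already known''), so your derivation --- antitonicity of $\neg$ for one inclusion and, for the other, the fact that $a\cdot(-)$, being a left adjoint, preserves arbitrary joins, so that the product $(\bigwedge\{\neg x_i\})\cdot(\bigvee\{x_i\})$ decomposes as a join of terms each below $\neg x_i\cdot x_i = 0$ --- is a correct filling-in rather than a deviation. For the easy half of (ii) you and the paper coincide: antitonicity of $D$, which the paper re-derives inline and you quote from Lemma \ref{NDM}(ii). The divergence is in proving $D\bigwedge\{x_i\} \le \bigvee\{Dx_i\}$. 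Both proofs reduce it via \textbf{(DE)} to the identity $\bigwedge\{x_i\} \vee \bigvee\{Dx_i\} = 1$, but the paper obtains that identity by passing from ``$x_j \vee \bigvee\{Dx_i\} = 1$ for all $j$'' straight to ``$\bigwedge\{x_i\} \vee \bigvee\{Dx_i\} = 1$''. Read literally, that step is an instance of an infinite distributive law that complete residuated lattices do not satisfy in general (in the complete Heyting algebra of open subsets of $\mathbb{R}$, take $x_j = (-1/j,1/j)$ and $c = \mathbb{R}\setminus\{0\}$: each $x_j \vee c = 1$, yet $\bigwedge\{x_j\} \vee c = c$); it is sound in the present setting only because $D$ exists everywhere, and that is precisely what has to be argued. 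Your detour supplies exactly that argument: from $Dx_i \le s$, where $s = \bigvee\{Dx_i\}$, antitonicity and the derived law $DDx \preccurlyeq x$ give $Ds \le DDx_i \le x_i$ for every $i$, hence $Ds \le \bigwedge\{x_i\}$ and therefore $1 = s \vee Ds \le s \vee \bigwedge\{x_i\}$. (One can even shortcut your shortcut: applying \textbf{(DE$^\prime$)} directly to $x_j \vee s = 1$ yields $Ds \le x_j$ without invoking $DDx \preccurlyeq x$.) So the paper's proof is terser but elides a step that your argument makes explicit; your version is fully rigorous, uses only \textbf{(DI)}, \textbf{(DE$^\prime$)}, antitonicity and $DDx \preccurlyeq x$, and as a by-product it validates the distributivity-style inference the paper invokes, under the standing hypothesis that the algebra is an $\mathbb{RL}^D$-algebra.
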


\begin{proof2} We prove only (ii), (i) is already known. 
By {\bf{(DI)}} we have that $ x_{j} \vee Dx_{j} = 1$. 
Then, $ x_{j} \vee \bigvee \{ Dx_{i}\} = 1$ for all $j \in I$ and so, $\bigwedge \{x_{i}\} \vee \bigvee \{ Dx_{i}\} = 1$. 
So, by {\bf{(DE)}} we obtain that $D\bigwedge \{ x_{i}\} \leq \bigvee \{D x_{i}\}$. 
  
For the other inequality, using {\bf{(DI$^\prime$)}}, we have that  $\bigwedge \{ x_{i} \} \vee D\bigwedge \{ x_{i} \} = 1$. 
Now, from $\bigwedge \{ x_{i} \} \leq x_{j}$ and $x_{j} \leq x_{j} \vee D \bigwedge \{x_{i}\}$ we obtain
$\bigwedge \{ x_{i} \} \leq x_{j} \vee D \bigwedge \{x_{i}\}$, 
and taking into account that $D\bigwedge\{ x_{i} \} \leq x_{j} \vee D\bigwedge\{ x_{i} \}$, we have that  
$\bigwedge \{ x_{i} \} \vee D \bigwedge\{ x_{i} \} \leq x_{j} \vee D (\bigwedge \{x_{i}\}$. 
Now, by {\bf{(DI$^\prime$)}} we obtain that $\{x_i\} \vee D \bigwedge\{x_{i}\} = 1$, which, by {\bf{(DE)}}, 
implies $D x_j \leq D \bigwedge \{x_{i}\}$, for all $j \in I$. 
Thus, we finally get $\bigvee \{ Dx_{i} \} \leq D\bigwedge \{ x_{i} \}$. 
\end{proof2}

In the next proposition, $\mathbb{N}$ and $\mathbb{N}^{+}$ denote the set of natural numbers including $0$ and excluding $0$, respectively. 

\begin{pro}
 Let {\bf{A}} be a complete $\mathbb{RL}^D$-algebra. 
 
 \noindent Then, $Ba$ exists, with $Ba = \bigwedge \{ (\neg D)^{n}a: n \in \mathbb{N} \}$, for any $a \in A$. 
\end{pro}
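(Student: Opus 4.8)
The plan is to verify that the element $m := \bigwedge\{(\neg D)^{n}a : n \in \mathbb{N}\}$, which exists by completeness of $\mathbf{A}$, satisfies the three characterizing conditions \textbf{(BE1)}, \textbf{(BE2)}, and \textbf{(BI)} of the greatest Boolean below $a$. The first thing I would record is that the sequence $((\neg D)^{n}a)_{n}$ is decreasing: by Lemma~\ref{NDM}(i) we have $\neg Da \leq a$, and since $\neg D$ is monotone by Lemma~\ref{NDM}(ii), an easy induction gives $(\neg D)^{n+1}a \leq (\neg D)^{n}a$ for all $n$. Consequently its largest term is $(\neg D)^{0}a = a$, so deleting it does not affect the infimum: $\bigwedge_{n \geq 0}(\neg D)^{n}a = \bigwedge_{n \geq 1}(\neg D)^{n}a = m$. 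This elementary remark is the engine of the whole argument.

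Two of the conditions are then quick. Condition \textbf{(BE1)}, that $m \leq a$, is immediate since $m \leq (\neg D)^{0}a = a$. For \textbf{(BI)}, assume $b \leq a$ and $b \vee \neg b = 1$; I would show by induction that $b \leq (\neg D)^{n}a$ for every $n$. The base case is the hypothesis, and for the inductive step I apply Lemma~\ref{nmB} with $(\neg D)^{n}a$ in place of $a$: from $b \leq (\neg D)^{n}a$ and $b$ Boolean it yields $b \leq \neg D\big((\neg D)^{n}a\big) = (\neg D)^{n+1}a$. Passing to the infimum gives $b \leq m$.

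The crux is \textbf{(BE2)}, namely that $m$ is Boolean, and this is exactly where completeness is used, via the De Morgan laws of Lemma~\ref{DM}. I would compute $Dm = D\bigwedge_{n}(\neg D)^{n}a = \bigvee_{n} D(\neg D)^{n}a$ by Lemma~\ref{DM}(ii), and then $\neg Dm = \neg\bigvee_{n} D(\neg D)^{n}a = \bigwedge_{n} \neg D(\neg D)^{n}a = \bigwedge_{n}(\neg D)^{n+1}a$ by Lemma~\ref{DM}(i). By the decreasing-chain remark, this final infimum equals $m$, so $\neg Dm = m$, and Lemma~\ref{L-D} then certifies that $m$ is Boolean. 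With all three conditions in hand, $Ba$ exists and equals $m$.

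I expect the only real obstacle to be justifying the interchange of $\neg$ and $D$ with the infinite meets and joins; but Lemma~\ref{DM} provides precisely this, so the argument reduces to the bookkeeping above. The one subtle point worth stating explicitly is that the infimum over a decreasing chain is insensitive to its first term, which is what makes $\neg Dm$ collapse back to $m$ and hence makes $m$ a fixed point of $\neg D$.
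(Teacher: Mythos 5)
Your proposal is correct and follows essentially the same route as the paper's own proof: verify \textbf{(BE1)}, \textbf{(BE2)}, and \textbf{(BI)} for $\bigwedge\{(\neg D)^{n}a : n \in \mathbb{N}\}$, using the De Morgan laws of Lemma~\ref{DM} together with Lemma~\ref{L-D} to show this infimum is a fixed point of $\neg D$ (hence Boolean), and an induction on $n$ for \textbf{(BI)}. The only cosmetic differences are that you derive the equality $\neg Dm = m$ via the decreasing-chain observation, where the paper notes that $\neg Dm \leq m$ always holds and only checks $m \leq \bigwedge_{n\geq 1}(\neg D)^n a$, and that you invoke Lemma~\ref{nmB} where the paper inlines the same reasoning from Lemmas~\ref{NDM} and~\ref{L-D}.
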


\begin{proof2}
 Considering the definition of $B$, it is enough to prove, for $a \in A$, 
 (i) $\bigwedge \{ (\neg D)^{n}a: n \in \mathbb{N} \} \leq x$, 
 (ii) $\bigwedge \{ (\neg D)^{n}a: n \in \mathbb{N} \} \vee \neg \bigwedge \{ (\neg D)^{n}a: n \in \mathbb{N} \} = 1$, and  
 (iii) if $b \leq a$ and $b \vee \neg b = 1$, then $b \leq \bigwedge \{ (\neg D)^{n}a: n \in \mathbb{N} \}$. 
 Now, (i) follows, because $a \in \{ (\neg D)^{n}a: n \in \mathbb{N} \}$, as $a = (\neg D)^{0}a$. 
 Regarding (ii) and using Lemma \ref{L-D}, it is enough to prove that 
 $\neg D {\color{red}(} \bigwedge \{ (\neg D)^{n}a: n \in \mathbb{N} \} {\color{red})} = \bigwedge \{ (\neg D)^{n}a: n \in \mathbb{N} \}$. 
 As it is always the case, for any $b \in A$, that $\neg Db \leq b$, it suffices to prove that 
 $\bigwedge \{ (\neg D)^{n}a: n \in \mathbb{N} \} \leq \neg D {\color{red}(} \bigwedge \{ (\neg D)^{n}a: n \in \mathbb{N} \} {\color{red})}$.  
 Now, using both properties of Lemma \ref{DM}, we have that the right hand side of the just given inequality is equal to 
 $\bigwedge \{ (\neg D)^{n}a: n \in \mathbb{N}^{+} \}$. 
 It is clear that $\bigwedge \{ (\neg D)^{n}a: n \in \mathbb{N} \} \leq \bigwedge \{ (\neg D)^{n}a: n \in \mathbb{N}^{+} \}$, 
 because $\bigwedge \{ (\neg D)^{n}a: n \in \mathbb{N} \} \leq (\neg D)^{m}a$, for $m \in \mathbb{N}^{+}$.
 Regarding (iii), suppose (iv) $b \leq a$ and $b \vee \neg b = 1$, the last of which implies, by Lemma \ref{L-D}, that (v) $\neg Db = b$. 
 In order to prove that $b \leq \bigwedge \{ (\neg D)^{n}a: n \in \mathbb{N} \}$, it is enough to prove that 
 $b \leq (\neg D)^{n}a$, for all $n \in \mathbb{N}$, which easily follows by induction, as $b \leq a = (\neg D)^{0}a$, by (iv), and 
 supposing that $b \leq (\neg D)^{n}a$, it follows, using Lemma \ref{NDM}, that $\neg Db \leq (\neg D)^{n+1}a$, and, using (v), 
 $b \leq (\neg D)^{n+1}a$. 
\end{proof2}

In Example \ref{ByesDno} we saw that the existence of $B$ in a residuated lattice does not force the existence of $D$. 
Now, let us see that operation $\Delta$ is stronger than $B$ in this respect.

\begin{pro} \label{DeltagivesD}
Let ${\bf{A}} \in \mathbb{RL}$. 
If $\Delta$ exists in {\bf{A}}, then also $D$ exists in {\bf{A}}, with $D = \neg \Delta$ and $\Delta = \neg D$.
\end{pro}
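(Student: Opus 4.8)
The plan is to verify directly that $\neg\Delta a$ satisfies the two defining conditions of the join-complement $Da = \min\{b \in A : a \vee b = 1\}$, namely {\bf{(DI$^\prime$)}} that $a \vee \neg\Delta a = 1$, and {\bf{(DE$^\prime$)}} that $a \vee b = 1$ implies $\neg\Delta a \leq b$. Establishing these two facts shows that $D$ exists and equals $\neg\Delta$; the companion identity $\Delta = \neg D$ then comes almost for free.

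First I would dispatch {\bf{(DI$^\prime$)}}. Since $\Delta a \leq a$ by {\bf{($\Delta$E1)}}, monotonicity of $\vee$ gives $\Delta a \vee \neg\Delta a \leq a \vee \neg\Delta a$, and the left-hand side is $1$ by {\bf{($\Delta$E2)}}; hence $a \vee \neg\Delta a = 1$, so $\neg\Delta a$ is indeed \emph{a} join-complement of $a$.

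The heart of the argument, and the step I expect to carry the real content, is the minimality condition {\bf{(DE$^\prime$)}}. Assume $a \vee b = 1$. Applying $\Delta$ and using {\bf{($\Delta$I2)}} together with {\bf{($\Delta$I1)}} gives $1 = \Delta 1 = \Delta(a \vee b) \preccurlyeq \Delta a \vee \Delta b$, so that $\Delta a \vee \Delta b = 1$. Now Lemma \ref{GRL}(vi), applied with $\Delta a$ and $\Delta b$ in the roles of the two elements, yields $\neg\Delta a \leq \Delta b$, and finally $\Delta b \leq b$ by {\bf{($\Delta$E1)}} gives $\neg\Delta a \leq b$ by transitivity of $\leq$. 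This is exactly where the distributive join axiom {\bf{($\Delta$I1)}} is used, and it is precisely the axiom that $B$ lacks; this is why the analogous statement would fail with $B$ in place of $\Delta$, so the whole proposition hinges on this one inequality.

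With $D = \neg\Delta$ in hand, the remaining identity is immediate: $\neg D a = \neg\neg\Delta a$, and since $\Delta a$ is Boolean by {\bf{($\Delta$E2)}}, Proposition \ref{infmon}(v) gives $\neg\neg\Delta a = \Delta a$, so $\Delta = \neg D$. I foresee no genuine obstacle beyond correctly invoking {\bf{($\Delta$I1)}}--{\bf{($\Delta$I2)}} to push the covering $a \vee b = 1$ through $\Delta$; everything else is routine monotonicity and the double-negation law for Boolean elements.
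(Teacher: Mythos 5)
Your proposal is correct and follows essentially the same route as the paper: verify $a \vee \neg\Delta a = 1$ from {\bf{($\Delta$E1)}}--{\bf{($\Delta$E2)}}, then push $a \vee b = 1$ through $\Delta$ via {\bf{($\Delta$I1)}}--{\bf{($\Delta$I2)}} and apply Lemma \ref{GRL}(vi) to get minimality, and finish with the double-negation law for the Boolean element $\Delta a$ (the paper derives $\neg\neg\Delta a = \Delta a$ from Lemma \ref{GRL}(vi)--(vii) where you cite Proposition \ref{infmon}(v), which is the same fact packaged differently).
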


\begin{proof2}
Suppose {\bf{A}} is a residuated lattice where $\Delta$ exists. 
Then, also $\neg \Delta$ exists. 
We have to prove that $Da = \neg \Delta a$, for any $a \in A$. 
We have that $a \vee \neg \Delta a = 1$, as $\Delta a \vee \neg \Delta a = 1$ and $\Delta a \leq a$. 
Now, suppose $a \vee b = 1$. 
Then, $\Delta(a \vee b) = \Delta 1 = 1$. 
It is also the case that $\Delta(a \vee b)=\Delta a \vee \Delta b$. 
So, $\Delta a \vee \Delta b = 1$. 
Using Lemma \ref{GRL}(vi), $\neg \Delta a \leq \Delta b$. 
Moreover, $\Delta b \leq b$. 
So, $\neg \Delta a \leq b$.

Let us also see that $\Delta a = \neg Da$, for all $a$. 
From the first part it follows that $\neg Da = \neg \neg \Delta a$. 
Now, from {\bf{($\Delta$E2)}}, using Lemma \ref{GRL}(vi), it follows that $\neg \neg \Delta a \leq \Delta a$.  
And using Lemma \ref{GRL}(vii) we have that $\Delta a \leq \neg \neg \Delta a$. 
\end{proof2}

\begin{rem}
The reciprocal of Proposition \ref{DeltagivesD} is not the case, 
as $D$ exists in Example \ref{5}, but $\Delta$ does not. 
\end{rem}

\begin{cor}
 Let ${\bf{A}} \in \mathbb{RL}$. 
 If $\Delta$ exists in ${\bf{A}}$, then also $B$ and $D$ exist, and we have $\Delta = B = DD = \neg D$. 
\end{cor}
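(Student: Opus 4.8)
The plan is to assemble the statement essentially as a consequence of the two immediately preceding results, Propositions \ref{DimB} and \ref{DeltagivesD}, together with a short computation for the one remaining equality $DD = \Delta$. First I would invoke Proposition \ref{DimB} to conclude that, whenever $\Delta$ exists in $\mathbf{A}$, then $B$ also exists and moreover $B = \Delta$. Next I would invoke Proposition \ref{DeltagivesD} to conclude that $D$ likewise exists, with $D = \neg \Delta$ and $\Delta = \neg D$. Combining these two facts immediately delivers three of the four terms: $\Delta = B$ and $\Delta = \neg D$, hence $\Delta = B = \neg D$.

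It then remains only to establish $DD = \Delta$, which I would verify by a direct computation using $D = \neg \Delta$. For any $a \in A$ we have $DDa = \neg \Delta(\neg \Delta a)$. The key observation is that $\Delta a$ is Boolean by {\bf{($\Delta$E2)}}, and the negation of a Boolean element is again Boolean by Proposition \ref{infmon2}(i); thus $\neg \Delta a$ is Boolean. Since $\Delta$ fixes every Boolean element by Lemma \ref{DB}, we get $\Delta(\neg \Delta a) = \neg \Delta a$, and therefore $DDa = \neg \neg \Delta a$. Finally, because $\Delta a$ is Boolean, double negation acts as the identity on it by Proposition \ref{infmon}(v), giving $DDa = \Delta a$. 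This closes the chain $\Delta = B = DD = \neg D$.

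The argument presents no genuine obstacle, since it largely stitches together the two previous propositions. The only point requiring a little care is the verification that $DD = \Delta$: one must recognize that iterating $D$ amounts to applying $\neg\neg$ to the already-Boolean element $\Delta a$, which then collapses back to $\Delta a$ by involutivity of negation on Boolean elements. Once that step is in place the corollary follows at once.
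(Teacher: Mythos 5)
Your proof is correct and takes essentially the same route as the paper: reduce to showing $DD = \Delta$ via Propositions \ref{DimB} and \ref{DeltagivesD}, then note that $\Delta$ fixes the Boolean element $\neg\Delta a$ (Lemma \ref{DB}), so $DDa = \neg\neg\Delta a$, which collapses to $\Delta a$ by involutivity of negation on Boolean elements. The only cosmetic difference is that the paper justifies $\neg\neg\Delta a = \Delta a$ directly from {\bf{($\Delta$E2)}} and Lemma \ref{GRL}(vi), whereas you cite Proposition \ref{infmon}(v); these are the same fact.
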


\begin{proof2}
 Considering Propositions \ref{DimB} and \ref{DeltagivesD}, it is enough to prove that $\Delta = DD$, 
 which follows from $\neg \Delta \neg \Delta = \Delta$. 
 As for any $a \in A$, $\Delta a$ is Boolean due to {\bf{($\Delta$E2)}}, using Lemma \ref{DB} it is enough to check that $\neg \neg \Delta = \Delta$, 
 which follows again from {\bf{($\Delta$E2)}} and Lemma \ref{GRL}(vi).  
\end{proof2}

\section{The logic FL$^B_{ew}$}

In this section we introduce an expansion of FL$_{ew}$ with a unary connective $B$, 
whose intended algebraic semantics is the variety of $\mathbb{RL}^B$-algebras studied in Section 3. 

Indeed, we define FL$^B_{ew}$ as the expansion of FL$_{ew}$  with the following axiom schemas: 

\medskip

\begin{tabular}{ll}
{\bf{(B1)}} & $B\varphi \to \varphi$, \\
{\bf{(B2)}} & $B\varphi \vee \neg B\varphi$, \\
{\bf{(B3)}} & $B(\varphi \vee \neg \varphi) \to (B\varphi \vee \neg \varphi)$, \\ 
{\bf{(B4)}} & $B(\varphi \to \psi) \to (B\varphi \to B \psi)$.  
\end{tabular}

\medskip

\noindent and the following additional rule: 

\medskip

\begin{tabular}{ll}
{\bf{(B)}} & From $\varphi$ derive  $B\varphi$. 
\end{tabular}

\medskip

\noindent We denote (finitary) derivability in FL$^B_{ew}$ by $\vdash$. 

Note that we have the following facts. 

\begin{lem}\label{LFLB}
 (i) $\vdash B\varphi \to BB\varphi$ and
 
 (ii) $B\varphi \to \psi \vdash B\varphi \to B\psi$. 
\end{lem}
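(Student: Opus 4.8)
The plan is to prove both facts directly from the axiom schemas and the rule \textbf{(B)}, treating them as straightforward syntactic derivations that mirror the algebraic arguments in Lemma \ref{Blem}(iii) and the $B$-monotonicity of Lemma \ref{LB}(i).

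For part (i), I would first recall that $\vdash B\varphi \to \varphi$ is axiom \textbf{(B1)}, and I want to upgrade this to $\vdash B\varphi \to BB\varphi$. The natural route is to establish a derived \emph{monotonicity rule}: if $\vdash \psi \to \chi$ then $\vdash B\psi \to B\chi$. This follows by applying the rule \textbf{(B)} to the theorem $\psi \to \chi$ to obtain $\vdash B(\psi \to \chi)$, then using axiom \textbf{(B4)}, namely $B(\psi \to \chi) \to (B\psi \to B\chi)$, together with modus ponens to conclude $\vdash B\psi \to B\chi$. Once this derived rule is in hand, part (i) would not follow by applying it to \textbf{(B1)} directly (that would only give $BB\varphi \to B\varphi$); instead I expect I must combine \textbf{(B2)} and \textbf{(B1)} in the style of the algebraic proof of Lemma \ref{Blem}(iii), where one shows $Ba \leq BBa$ using \textbf{(BI)} and \textbf{(BE2)}. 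Since \textbf{(BI)} is a quasi-equation rather than an axiom schema, I anticipate the cleanest logical argument will use that $B\varphi$ is provably ``Boolean'' (via \textbf{(B2)}) and that $B\varphi \to B\varphi$ is a theorem, so that $B\varphi$ is a Boolean proposition below itself, forcing $B\varphi \to BB\varphi$.

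For part (ii), which is a rule rather than a theorem, I would assume the hypothesis $B\varphi \to \psi$ and derive $B\varphi \to B\psi$. The idea is to use the derived monotonicity rule applied to the assumed premise: from $B\varphi \to \psi$ I want $B(B\varphi) \to B\psi$, that is, $BB\varphi \to B\psi$. Here the rule \textbf{(B)} applies to the \emph{theorem} $B\varphi \to \psi$ only if it is a theorem, so more carefully I would apply \textbf{(B)} to the premise in a hypothetical derivation — using that the rule \textbf{(B)} is a genuine inference rule of the calculus — to get $B(B\varphi \to \psi)$, then \textbf{(B4)} and modus ponens to obtain $BB\varphi \to B\psi$. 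Finally I would compose this with part (i), $B\varphi \to BB\varphi$, via transitivity of implication (derived rule (11)) to conclude $B\varphi \to B\psi$.

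The main obstacle I anticipate is justifying the application of the necessitation-style rule \textbf{(B)} inside a derivation from hypotheses, since rules like \textbf{(B)} typically behave correctly only on theorems (global assumptions), not on arbitrary local premises — this is exactly the subtlety that makes $\Delta$-style and modal deduction theorems delicate. I would need to check that the intended notion of $\vdash$ here permits applying \textbf{(B)} to the premise $B\varphi \to \psi$ in part (ii), or else reformulate the argument so that \textbf{(B)} is only ever applied to genuine theorems. Getting this bookkeeping right, rather than any single calculation, is where the care is required; the underlying algebraic facts (Lemmas \ref{Blem} and \ref{LB}) guarantee the statements are true, so the work is purely in producing a syntactically legitimate derivation.
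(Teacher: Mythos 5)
Your part (ii) is essentially the paper's intended argument (the paper only remarks that it ``follows easily using (i)''): apply rule \textbf{(B)} to the premise $B\varphi \to \psi$, use \textbf{(B4)} and \emph{modus ponens} to get $BB\varphi \to B\psi$, and compose with (i) by transitivity. Your worry about applying \textbf{(B)} to a premise is legitimate bookkeeping, but it is settled by the paper's conventions: $\vdash$ is the global consequence relation of the Hilbert calculus, in which \textbf{(B)} may be applied to any formula occurring in a derivation --- this is exactly why $\varphi \vdash B\varphi$ while $\nvdash \varphi \to B\varphi$, and why the deduction theorem takes the $\Delta$-style form of Theorem \ref{Deduction}.

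Part (i), however, contains a genuine gap. You correctly see that monotonicity applied to \textbf{(B1)} only yields $BB\varphi \to B\varphi$, and that the algebraic proof of Lemma \ref{Blem}(iii) rests on the quasi-equation \textbf{(BI)}, which is not available in the calculus. But your resolution --- ``$B\varphi$ is a Boolean proposition below itself, forcing $B\varphi \to BB\varphi$'' --- is precisely the step with no syntactic justification: nothing in the calculus lets you pass from ``provably Boolean and provably below $\chi$'' to ``provably below $B\chi$'' unless you derive that implication, and the derivation requires axiom \textbf{(B3)}, which your proposal never invokes. This is not a cosmetic omission: \textbf{(B1)}, \textbf{(B2)}, \textbf{(B4)} and rule \textbf{(B)} alone do not suffice. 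For instance, on the G\"odel algebra ${\bf G}_2 \times {\bf G}_3$ define $B'(1,1)=(1,1)$, $B'(0,1)=(0,1)$, $B'(1,\frac{1}{2})=(1,0)$, and $B'z=(0,0)$ for the remaining elements $z$; one checks that $B'$ satisfies the algebraic forms of \textbf{(B1)}, \textbf{(B2)}, \textbf{(B4)} and $B'1=1$ (so rule \textbf{(B)} is sound), yet $B'(1,\tfrac{1}{2})=(1,0) \not\leq (0,0) = B'B'(1,\tfrac{1}{2})$, so $B\varphi \to BB\varphi$ fails in this model and hence is underivable from that fragment (the model falsifies \textbf{(B3)} at $(1,0)$, which is why it can exist). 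The paper's actual proof is the syntactic mirror of the proof of Theorem \ref{ec}: from \textbf{(B2)} and rule \textbf{(B)} obtain $B(B\varphi \vee \neg B\varphi)$; apply \textbf{(B3)} instantiated at $B\varphi$ and \emph{modus ponens} to get $BB\varphi \vee \neg B\varphi$; then conclude $B\varphi \to BB\varphi$ by reasoning by cases in FL$_{ew}$, using $BB\varphi \to (B\varphi \to BB\varphi)$ (weakening) and $\neg B\varphi \to (B\varphi \to BB\varphi)$ (via $0 \to BB\varphi$). Alternatively one could argue semantically through algebraizability and Lemma \ref{Blem}(iii), but that is not what you propose, and the syntactic sketch you give cannot be completed as stated.
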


\begin{proof2}
For (i) check the following derivation:

\vspace{5pt}

\begin{tabular}{l l l}
 1. & $B\varphi \vee \neg B\varphi$ & {\bf{(B2)}} \\ 
 
 2. & $B(B\varphi \vee \neg B\varphi)$ & 1, rule {\bf{(B)}} \\ 
 
 3. & $BB\varphi \vee \neg B\varphi$ & {\bf{(B3)}}, 2, mp \\ 
 
 4. & $BB\varphi \to (B\varphi \to BB\varphi)$ & FL$_{ew}$ \\  
 
 5. & $\neg B \varphi \to (B \varphi \to BB \varphi)$ & FL$_{ew}$ \\  
 
 6. & $B\varphi \to BB\varphi$ & 3, 4, 5, FL$_{ew}$ \\  
 \end{tabular}

 \vspace{5pt}
 
\noindent (ii) follows easily using (i). 
\end{proof2}

Clearly, FL$^B_{ew}$ is a {\em Rasiowa implicative} logic (cf.\ \cite{Ras}). 
Then, it follows that it is algebraizable in the sense of Blok and Pigozzi \cite{BP89}.  
It is also straightforward to check that the variety $\mathbb{RL}^B$ is its equivalent algebraic semantics. 
Algebraizability immediately implies strong completeness of FL$^B_{ew}$ with respect to $\mathbb{RL}^B$. 

\begin{teo}
 For every set $\Gamma\cup \{\varphi\}$ of formulas, $\Gamma \vdash \varphi$ iff 
 for every $\alg{A} \in \mathbb{RL}^B$ and every $\alg{A}$-evaluation $e$, $e(\varphi) = 1$, 
 whenever $e[\Gamma] \subseteq \{1\}$.
\end{teo}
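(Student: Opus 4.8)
The plan is to prove the two directions separately: soundness by a direct semantic induction, and completeness by the algebraizability machinery already announced, realized concretely through a Lindenbaum--Tarski quotient.

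For the left-to-right (soundness) direction I would argue by induction on the length of a derivation of $\varphi$ from $\Gamma$. The FL$_{ew}$-axioms and modus ponens are sound over every residuated lattice, so it only remains to check the four new axiom schemas and the rule {\bf (B)}. Given any $\alg{A}\in\mathbb{RL}^B$ and any evaluation $e$, the validity of {\bf (B1)}--{\bf (B4)} is immediate from the defining conditions of $B$: {\bf (B1)} from {\bf (BE1)}, {\bf (B2)} from {\bf (BE2)}, {\bf (B3)} from Proposition~\ref{BI3}, and {\bf (B4)} from Lemma~\ref{LB}(vii). For the rule {\bf (B)}, if $e(\varphi)=1$ then $e(B\varphi)=B\,e(\varphi)=B1=1$ by Lemma~\ref{Blem}(ii); hence the rule preserves the property of being evaluated at $1$ under any $e$ with $e[\Gamma]\subseteq\{1\}$.

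For the right-to-left (completeness) direction I would fix $\Gamma$ and build the Lindenbaum--Tarski algebra. Define $\alpha\equiv_\Gamma\beta$ iff $\Gamma\vdash\alpha\to\beta$ and $\Gamma\vdash\beta\to\alpha$. That this is a congruence for the FL$_{ew}$-connectives is standard; the only new point is compatibility with $B$, which is exactly where the rule {\bf (B)} is used as a genuine rule of the consequence relation: from $\Gamma\vdash\alpha\to\beta$ one gets $\Gamma\vdash B(\alpha\to\beta)$ by {\bf (B)} and then $\Gamma\vdash B\alpha\to B\beta$ by {\bf (B4)}, and symmetrically. The quotient $\mathbf{A}_\Gamma=Fm/{\equiv_\Gamma}$ is a residuated lattice, and to place it in $\mathbb{RL}^B$ it suffices, by Theorem~\ref{ec}, to verify the five equations {\bf (BE1)}, {\bf (BE2)}, {\bf (BI1)}, {\bf (BI2)}, {\bf (BI3)} on equivalence classes; {\bf (BE1)}, {\bf (BE2)}, {\bf (BI3)} come directly from {\bf (B1)}, {\bf (B2)}, {\bf (B3)}, while {\bf (BI2)} uses $\vdash B1$ (rule {\bf (B)} applied to $\vdash 1$) and {\bf (BI1)} uses the derivable $B$-monotonicity of Lemma~\ref{LFLB}. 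Taking the canonical evaluation $e(\alpha)=[\alpha]$ and noting that $[\alpha]=1$ iff $\Gamma\vdash\alpha$ (using axioms (10) and (17)), we have $e[\Gamma]\subseteq\{1\}$, so if the right-hand side holds then $e(\varphi)=1$, i.e.\ $\Gamma\vdash\varphi$.

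The step I expect to be the main obstacle is the congruence of $B$ together with the correct reading of the rule {\bf (B)}: everything hinges on {\bf (B)} being applicable to any formula derivable from $\Gamma$, not merely to theorems, which is what makes $\Gamma\vdash\alpha$ entail $\Gamma\vdash B\alpha$ and thereby yields both the compatibility of $\equiv_\Gamma$ with $B$ and the validity of {\bf (BI1)}, {\bf (BI2)} in the quotient. Once this is settled, identifying the quotient as a member of $\mathbb{RL}^B$ reduces entirely to Theorem~\ref{ec}, and the biconditional follows. Equivalently, one may phrase the whole argument abstractly: FL$^B_{ew}$ is Rasiowa-implicative because $\to$ satisfies reflexivity, modus ponens, transitivity and, crucially, the $B$-congruence law just verified, hence it is algebraizable by \cite{BP89} with equivalent algebraic semantics the variety defined by the translated axioms, which by Theorem~\ref{ec} is precisely $\mathbb{RL}^B$; algebraizability then gives the stated equivalence directly.
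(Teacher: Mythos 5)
Your proposal is correct, and in substance it is the paper's argument with the abstraction unpacked. The paper disposes of this theorem in one stroke: FL$^B_{ew}$ is a Rasiowa implicative logic, hence algebraizable in the sense of Blok and Pigozzi, with $\mathbb{RL}^B$ as its equivalent algebraic semantics, and strong completeness is an immediate consequence of algebraizability --- this is exactly the ``abstract phrasing'' you offer in your closing paragraph. Your main line of argument instead carries out concretely what that citation packages: soundness by induction on derivations (the only non-routine point being that rule {\bf (B)} preserves evaluation to $1$, via Lemma~\ref{Blem}(ii)), and completeness via the Lindenbaum--Tarski quotient, which you place inside $\mathbb{RL}^B$ by checking the equational basis of Theorem~\ref{ec} rather than the quasi-equation {\bf (BI)} directly. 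The concrete route has the merit of exhibiting exactly where the new primitives earn their keep: rule {\bf (B)}, read as a genuine rule of the consequence relation (applicable to formulas derived from $\Gamma$, not only to theorems), combined with axiom {\bf (B4)}, yields compatibility of $\equiv_\Gamma$ with $B$; this is the same mechanism the paper exploits at the algebraic level when it shows $\mathbb{RL}^B$-filters correspond to congruences. The paper's route buys brevity and the further dividends of algebraizability beyond this single theorem. One minor precision: what you call the ``derivable $B$-monotonicity of Lemma~\ref{LFLB}'' is, strictly speaking, Lemma~\ref{LFLB}(ii) applied to the theorem $B\alpha \to (\alpha \vee \beta)$ (obtained from {\bf (B1)}, axiom (4), and transitivity), or equivalently rule {\bf (B)} plus {\bf (B4)} applied to axiom (4); either derivation gives {\bf (BI1)} in the quotient, so the step stands.
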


In FL$^B_{ew}$ the usual form of the deduction theorem does not hold. 
Indeed, $\varphi \vdash B\varphi$, but $\nvdash \varphi \to B\varphi$, 
Which fails in the three-element G\"{o}del algebra $[0, \frac{1}{2}, 1]_G$, where $B1 = 1$ and $B\frac{1}{2} = B0 = 0$: 
for any evaluation $e$ in this algebra, if $e(\varphi) = 1$, then $e(B\varphi) = 1$, 
but for $e(\varphi) = \frac{1}{2}$ we have $e(B\varphi) = 0$, and thus $e( \varphi \to B\varphi) = 0$.   

Actually, FL$^B_{ew}$ enjoys the same form of deduction theorem holding for logics with the $\Delta$ operator (cf. \cite[Proposition 2.2]{Ba}).  

\begin{teo} \label{Deduction}
$\Gamma , \varphi \vdash \psi$ iff $\Gamma \vdash B\varphi \rightarrow \psi$.
\end{teo}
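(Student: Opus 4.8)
The plan is to prove the deduction theorem for FL$^B_{ew}$ by establishing both directions separately, following the standard pattern for logics equipped with a $\Delta$-like operator. The key observation driving the whole argument is that $B$ is a ``Boolean necessity'' operator: axiom {\bf{(B1)}} gives $B\varphi \to \varphi$, so $B\varphi$ is stronger than $\varphi$, and crucially Lemma \ref{LFLB} gives us both $\vdash B\varphi \to BB\varphi$ and the derived rule ``from $B\varphi \to \psi$ infer $B\varphi \to B\psi$''. These are exactly the ingredients that make the $\Delta$-style deduction theorem go through, even though the ordinary deduction theorem fails.

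For the easy direction, assume $\Gamma \vdash B\varphi \to \psi$. I would show $\Gamma, \varphi \vdash \psi$: from the extra hypothesis $\varphi$, apply rule {\bf{(B)}} to obtain $B\varphi$, and then combine with $B\varphi \to \psi$ by modus ponens to get $\psi$. This uses the rule {\bf{(B)}} essentially and is just a two-line derivation.

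The harder and main direction is to assume $\Gamma, \varphi \vdash \psi$ and conclude $\Gamma \vdash B\varphi \to \psi$. The plan is to argue by induction on the length of the derivation of $\psi$ from $\Gamma \cup \{\varphi\}$, showing at each step $\delta$ in that derivation that $\Gamma \vdash B\varphi \to \delta$. The base cases are: if $\delta$ is an axiom or a member of $\Gamma$, then $\Gamma \vdash \delta$, and prefixing with the valid FL$_{ew}$ schema $\delta \to (B\varphi \to \delta)$ (axiom (12)) gives $\Gamma \vdash B\varphi \to \delta$; if $\delta = \varphi$ itself, then I need $\Gamma \vdash B\varphi \to \varphi$, which is precisely axiom {\bf{(B1)}}. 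For the inductive step with modus ponens, from $\Gamma \vdash B\varphi \to \eta$ and $\Gamma \vdash B\varphi \to (\eta \to \delta)$ one derives $\Gamma \vdash B\varphi \to \delta$ using the idempotency $B\varphi \to BB\varphi$ together with the FL$_{ew}$-law $(B\varphi \cdot B\varphi) \to \delta$ obtained from uncurrying (axioms (8),(9))---this is where the contraction-like property $\vdash B\varphi \to BB\varphi$ of Lemma \ref{LFLB}(i) is indispensable, since $B\varphi$ need not be genuinely contractive in FL$_{ew}$, but its Boolean character supplies the needed idempotency.

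The step I expect to be the main obstacle is the inductive case where $\delta = B\eta$ is inferred from $\eta$ by the rule {\bf{(B)}}. Here the induction hypothesis gives only $\Gamma \vdash B\varphi \to \eta$, and I must upgrade this to $\Gamma \vdash B\varphi \to B\eta$. This is exactly the content of the derived rule Lemma \ref{LFLB}(ii), so the obstacle dissolves once that lemma is invoked: applying it to $B\varphi \to \eta$ yields $B\varphi \to B\eta = B\varphi \to \delta$, completing the induction. Thus the whole argument reduces to careful bookkeeping over the inductive cases, with Lemma \ref{LFLB} doing the real work at precisely the two nontrivial steps (modus ponens and the $B$-rule).
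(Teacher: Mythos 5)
Your overall strategy is the same as the paper's: the easy direction via rule {\bf{(B)}} followed by \emph{modus ponens}, and the hard direction by induction on the length of the derivation, with axiom {\bf{(B1)}} handling the case $\delta = \varphi$, the weakening schema (12) handling axioms and members of $\Gamma$, and Lemma \ref{LFLB}(ii) correctly dispatching the rule-{\bf{(B)}} case. All of those steps are fine.

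The gap is in the \emph{modus ponens} case. To pass from $(B\varphi \conj B\varphi) \to \delta$ (obtained by uncurrying, as you say) to $B\varphi \to \delta$, you need the fusion-contraction formula $B\varphi \to (B\varphi \conj B\varphi)$. What you invoke instead, Lemma \ref{LFLB}(i), is $B\varphi \to BB\varphi$, which is a statement about iterating the \emph{operator} $B$, not about the monoidal product of $B\varphi$ with itself: $BB\varphi$ is $B$ applied to the formula $B\varphi$, whereas $B\varphi \conj B\varphi$ is a fusion. The two are not interchangeable, and $B\varphi \to BB\varphi$ does not yield contraction---together with {\bf{(B1)}} it only gives the interderivability of $B\varphi$ and $BB\varphi$, which leaves you exactly where you started with respect to the missing contraction. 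The property you need does hold, and it is precisely what the paper uses (it cites the ``derivable formula'' $B\varphi \to (B\varphi \conj B\varphi)$), but its source is axiom {\bf{(B2)}}, not Lemma \ref{LFLB}(i): in FL$_{ew}$ one can derive $\chi \to (\chi \conj \chi)$ from the premise $\chi \lor \neg\chi$, e.g.\ by instantiating axiom (5) with $\gamma := \chi \to (\chi \conj \chi)$, where the premise $\chi \to (\chi \to (\chi \conj \chi))$ comes from axiom (9) applied to $(\chi \conj \chi) \to (\chi \conj \chi)$, and $\neg\chi \to (\chi \to (\chi \conj \chi))$ comes from $0 \to (\chi \conj \chi)$ and transitivity; taking $\chi = B\varphi$ and using {\bf{(B2)}} then gives $\vdash B\varphi \to (B\varphi \conj B\varphi)$. (Algebraically this is just the fact that $B\varphi$ denotes a Boolean element and Boolean elements are $\cdot$-idempotent, Proposition \ref{infmon}(ii).) With that formula in place of your appeal to Lemma \ref{LFLB}(i), your induction goes through exactly as in the paper.
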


\begin{proof2}
$\Rightarrow$)
We prove by induction on every formula $\chi_{i}$ ($1 \leq i \leq n$) of the given derivation of $\psi$ from $\Gamma \cup \{ \varphi \}$ 
that $\Gamma \vdash B\varphi \to \chi_{i}$. 
If $\chi_{i} = \varphi$, then the result follows due to axiom schema {\bf{(B1)}}. 
If $\chi_{i}$ belongs to $\Gamma$ or is an instance of an axiom, 
then the result follows using \emph{modus ponens} and the derivability of the schema $\chi_{i} \to (B\varphi \to \chi_{i})$. 
If $\chi_{i}$ comes by application of \emph{modus ponens} on previous formulas in the derivation, then the result follows, 
because from $B\varphi \to \chi_{k}$ and $B\varphi \to (\chi_{k} \to \chi_{i})$ we may derive 
$(B\varphi \conj B\varphi) \to (\chi_{k} \conj (\chi_{k} \to \chi_{i}))$ and then also $B\varphi \to \chi_{i}$, 
using transitivity of $\to$ applied to the derivable formulas $B\varphi \to (B\varphi \conj B\varphi)$ and 
$(\chi_{k} \conj (\chi_{k} \to \chi_{i})) \to \chi_{i}$. 
Finally, if $\chi_{i} = B\chi_k$ comes using rule {\bf{(B)}} from formula $\chi_k$, 
then from $B \varphi \to \chi_k$ we may derive $B \varphi \to B\chi_k$ using Lemma \ref{LFLB}(ii). 

$\Leftarrow$) To the derivation given by the hypothesis add a step with $\varphi$. 
In the next step put $B\varphi$, which follows from the previous formula using rule {\bf{(B)}}. 
Finally, derive $\psi$ using \emph{modus ponens}.
\end{proof2}

Thanks to this $B$-deduction theorem, the logic FL$^B_{ew}$ has the following property: 
if we expand FL$^B_{ew}$ with any further rule $\varphi_{1}$, \textellipsis , $\varphi_{n}/ \varphi$, 
then it is possible to dispose of the rule just adding the axiom $(B\varphi_{1} \wedge \cdots \wedge B\varphi_{n}) \to \varphi$. 
This property also holds for the logics FL$_{ew}^\Delta$ and FL$_{ew}^D$.

\begin{pro}
{\rm FL$^B_{ew}$} is a conservative expansion of {\rm FL$_{ew}$}.
\end{pro}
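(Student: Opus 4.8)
The plan is to prove conservativity semantically, reducing it to two ingredients that are already at hand: the (classical, well-known) \emph{finite model property} of FL$_{ew}$, i.e.\ its completeness with respect to the class of \emph{finite} residuated lattices, together with Proposition \ref{finB}, which guarantees that the operation $B$ exists in every finite residuated lattice. One of the two inclusions is immediate: since FL$^B_{ew}$ is obtained from FL$_{ew}$ merely by adding the axiom schemas {\bf{(B1)}}--{\bf{(B4)}} and the rule {\bf{(B)}}, every FL$_{ew}$-theorem is automatically an FL$^B_{ew}$-theorem. Thus the whole content lies in the converse restricted to $B$-free formulas: if a formula $\varphi$ in the language of FL$_{ew}$ is a theorem of FL$^B_{ew}$, then it is already a theorem of FL$_{ew}$.

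For this converse I would argue by contraposition. Assume $\varphi$ is $B$-free and is \emph{not} a theorem of FL$_{ew}$. By the finite model property of FL$_{ew}$ there exist a finite $\mathbf{A} \in \mathbb{RL}$ and an $\mathbf{A}$-evaluation $e$ with $e(\varphi) \neq 1$. Since $\mathbf{A}$ is finite, Proposition \ref{finB} tells us that $B$ exists in $\mathbf{A}$; expanding $\mathbf{A}$ with this operation and invoking Theorem \ref{ec} (the fact that possessing such a $B$ is precisely equivalent to satisfying the equational basis of $\mathbb{RL}^B$), we obtain an algebra $\mathbf{A}^B \in \mathbb{RL}^B$ whose $\{\wedge,\vee,\cdot,\to,0,1\}$-reduct is exactly $\mathbf{A}$. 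As $\varphi$ contains no occurrence of $B$, its value under $e$ is computed using only the reduct operations, so $e(\varphi) \neq 1$ persists in $\mathbf{A}^B$. Finally, by the soundness half of the completeness theorem stated above (algebraizability of FL$^B_{ew}$ over $\mathbb{RL}^B$), the pair $(\mathbf{A}^B, e)$ witnesses that $\varphi$ is not a theorem of FL$^B_{ew}$, establishing the contrapositive and hence conservativity.

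I expect no serious obstacle here, since the argument is essentially a transfer of a finite countermodel; the bulk of the writing is routine. The two points requiring care are, first, that the finite model property must be invoked for the \emph{base} logic FL$_{ew}$ as an external classical result, so that the argument carries no circular dependence on the (separately established) finite model property of FL$^B_{ew}$; and second, that one must check that the canonical $B$ furnished by Proposition \ref{finB} genuinely turns $\mathbf{A}$ into an $\mathbb{RL}^B$-algebra, which is exactly what the equational characterization of Theorem \ref{ec} provides. If one wishes the stronger, deduction-level form of conservativity (for $B$-free $\Gamma$), the same model-theoretic transfer applies provided FL$_{ew}$ is complete with respect to its finite models also for deductions from premises; otherwise the theorem-level statement above is what the finite model property yields directly.
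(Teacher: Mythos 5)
Your proof is correct and is essentially identical to the paper's: the paper's own proof is a one-line appeal to Proposition \ref{finB} together with the Finite Model Property of FL$_{ew}$ (citing Okada--Terui), and your argument is exactly the spelled-out version of that transfer of a finite countermodel (your detour through Theorem \ref{ec} is harmless but unnecessary, since the $B$ given by Proposition \ref{finB} satisfies {\bf{(BE1)}}, {\bf{(BE2)}}, {\bf{(BI)}} by construction, which is already the definition of an $\mathbb{RL}^B$-algebra).
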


\begin{proof2}
Use Proposition \ref{finB} and the Finite Model Property of FL$_{ew}$ (see \cite{Ok}).
\end{proof2}

One could analogously define the expansion of MTL (which is in turn the extension of FL$_{ew}$ with the pre-linearity axiom 
$(\varphi \to \psi) \lor (\psi \to \varphi)$) with $B$, with the same additional axioms and rule, yielding the logic MTL$^B$, 
which is algebraizable and strongly complete with respect to the variety $\mathbb{MTL}^B$ of MTL$^B$-algebras. 
However, unlike the case of expansion with $\Delta$, MTL$^B$ is not a semilinear logic, that is, 
it is not complete with respect to the class of MTL$^B$-chains. 
The reason is that the $\lor$-form of rule {\bf{(B)}}, 
``from $\psi \lor \varphi$ derive $\psi \lor B\varphi$'', is not derivable in  MTL$^B$. 
Indeed, taking the coatoms $s$ and $t$ in the G\"odel algebra of Example \ref{5}, 
it is clear that $s \vee t = 1$, while $s \vee Bt = s \vee 0 = s$.  

As a final result, we can show that FL$^B_{ew}$ inherits from FL$_{ew}$ the Finite Model Property (FMP). 
Before proving this, we introduce some preliminary notation. 

For a logic $L \in \{$ FL$_{ew}$ or FL$_{ew}^B\}$, 
let us denote by $Fm(L, Var)$ the set of $L$-formulas built from a set $Var$ of propositional variables. 
Now let us define the enlarged set of propositional variables $Var^* = Var \cup \{``B\varphi" \mid B\varphi \in Fm($FL$_{ew}^B, Var)\}$, 
where $``B\varphi"$ is intended to denote a fresh propositional variable, one for each formula $B\varphi \in Fm($FL$_{ew}^B, Var)$. 
Then, we can define a one-to-one translation of every formula  $\varphi \in Fm($FL$_{ew}^B, Var)$ 
into a formula $\varphi^* \in Fm($FL$_{ew}, Var^*)$, by just inductively defining: 

\medskip

\noindent 
-  $0^* = 0$,  \\
- if $\varphi = p  \in Var$, then $\varphi^* = p$, \\
- if $\varphi = B\psi$, then $\varphi^* = ``B\psi"$, \\
- if $\varphi = \psi \odot \chi$, then $\varphi^* = \psi^* \odot \chi^*$, for $\odot \in \{\land, \lor, \&, \to \}$. 

\medskip

\noindent If $\Gamma$ is a set of formulas, we write $\Gamma^* = \{\varphi^* \mid \varphi \in \Gamma\}$. 
Note that for any $\psi \in Fm($FL$_{ew}, Var^*)$, there is a formula $\varphi \in Fm($FL$_{ew}^B, Var)$ such that $\varphi^* = \psi$. 

Moreover, we need the following result that will allow us to reduce proofs in FL$_{ew}^B$ to proofs in FL$_{ew}$. 

\begin{lem} \label{reduce} Let $T$ be the set of all instances of axioms of {\rm FL$_{ew}^B$}. 
For each set {\rm $\Gamma \cup \{\varphi\} \subseteq Fm($FL$_{ew}^B, Var)$}, it holds that 
\begin{center}
$\Gamma \vdash_{FL_{ew}^B} \varphi$ iff $\Gamma^* \cup Cg^* \cup  T^*  \vdash_{FL_{ew}} \varphi^*$, 
\end{center}
where $Cg = \{ B\varphi \leftrightarrow B\psi \mid \Gamma  \vdash_{FL_{ew}^B} \varphi \leftrightarrow \psi\}$.
\end{lem}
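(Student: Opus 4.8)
The plan is to prove the biconditional in Lemma \ref{reduce} by establishing each direction separately, with the $\Rightarrow$ direction by induction on the length of a derivation in FL$_{ew}^B$ and the $\Leftarrow$ direction by a back-translation argument.

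For the $\Rightarrow$ direction, I would proceed by induction on the length $n$ of a given FL$_{ew}^B$-derivation of $\varphi$ from $\Gamma$. At each step $\chi_i$ I must show $\Gamma^* \cup Cg^* \cup T^* \vdash_{FL_{ew}} \chi_i^*$. If $\chi_i$ is a hypothesis from $\Gamma$ or an instance of an FL$_{ew}^B$-axiom, then $\chi_i^*$ belongs to $\Gamma^*$ or to $T^*$ respectively, so it is immediately available. If $\chi_i \in \{0\} \cup Var$ or is an instance of an FL$_{ew}$-axiom, then $\chi_i^*$ is itself derivable in FL$_{ew}$ (the translation commutes with the connectives $\{\land,\lor,\&,\to\}$, so axiom instances translate to axiom instances). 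If $\chi_i$ comes by \emph{modus ponens} from $\chi_k$ and $\chi_k \to \chi_i$, then since $(\chi_k \to \chi_i)^* = \chi_k^* \to \chi_i^*$, the induction hypothesis together with \emph{modus ponens} in FL$_{ew}$ yields $\chi_i^*$. The only genuinely new case is when $\chi_i = B\chi_k$ arises by the rule \textbf{(B)} from an earlier $\chi_k$: here $\chi_i^* = \,``B\chi_k"$, a fresh variable that is \emph{not} in general derivable in FL$_{ew}$ from the hypotheses. This is where $Cg^*$ enters. Since $\chi_k$ is derivable, so is $\chi_k \leftrightarrow 1$ (in the sense that $\Gamma \vdash \chi_k \leftrightarrow (\psi \to \psi)$ for a theorem $\psi\to\psi$), and hence $B\chi_k \leftrightarrow B\mathbf{1}$ is one of the formulas whose translation lies in $Cg^*$; combined with axiom \textbf{(B2)}/the fact that $``B\mathbf{1}"$ is provably $1$ via $T^*$, this delivers $``B\chi_k"$. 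I expect this \textbf{(B)}-rule case to be the main obstacle, precisely because it is the step where the rule has no FL$_{ew}$ counterpart and must be simulated through the congruence set $Cg$.

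For the $\Leftarrow$ direction, I would take an FL$_{ew}$-derivation of $\varphi^*$ from $\Gamma^* \cup Cg^* \cup T^*$ and argue that every hypothesis used is the $*$-image of something FL$_{ew}^B$-derivable from $\Gamma$: formulas in $\Gamma^*$ back-translate to $\Gamma$ itself, formulas in $T^*$ back-translate to FL$_{ew}^B$-axioms, and each formula $(B\varphi \leftrightarrow B\psi)^* = \,``B\varphi" \leftrightarrow ``B\psi"$ in $Cg^*$ back-translates to $B\varphi \leftrightarrow B\psi$, which by definition of $Cg$ is FL$_{ew}^B$-derivable from $\Gamma$ (it follows from $\Gamma \vdash \varphi \leftrightarrow \psi$ by applying rule \textbf{(B)} and Lemma \ref{LFLB}(ii)). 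Since the FL$_{ew}$-rule \emph{modus ponens} is also a rule of FL$_{ew}^B$ and the translation $*$ is a bijection commuting with the connectives, each step of the FL$_{ew}$-derivation can be pulled back to a legitimate FL$_{ew}^B$-derivation step, yielding $\Gamma \vdash_{FL_{ew}^B} \varphi$.

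The key technical points to verify carefully are that $*$ is indeed a bijection between $Fm($FL$_{ew}^B,Var)$ and $Fm($FL$_{ew},Var^*)$ commuting with $\{\land,\lor,\&,\to\}$ and fixing $0$ (noted already in the excerpt), and that the derivability of $B\varphi \leftrightarrow B\psi$ from $\Gamma \vdash \varphi \leftrightarrow \psi$ really does follow from Lemma \ref{LFLB}(ii) applied in both directions. Once these are in place, the two inductions close the biconditional, with the \textbf{(B)}-rule/$Cg$ interplay being the crux of the whole argument.
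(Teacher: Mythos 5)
Your $\Leftarrow$ direction is correct and essentially forced: by bijectivity of $*$, every formula of $Fm($FL$_{ew}, Var^*)$ pulls back uniquely; hypotheses in $\Gamma^*\cup Cg^*\cup T^*$ pull back to formulas that are FL$_{ew}^B$-derivable from $\Gamma$ (for $Cg$ this uses rule {\bf (B)} together with {\bf (B4)} or Lemma~\ref{LFLB}(ii), as you note), and \emph{modus ponens} pulls back to \emph{modus ponens}. This is also the only direction the paper actually uses in the proof of the FMP theorem; since the paper omits any detailed proof of the lemma, there is no argument to compare against, but your two-induction strategy with $Cg$ simulating rule {\bf (B)} is the expected one.

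The gap is exactly at the step you yourself flag as the crux: the rule-{\bf (B)} case of the $\Rightarrow$ induction. Your claim that $``B1"$ ``is provably $1$ via $T^*$'' is false. $T$ consists only of \emph{instances of axiom schemas}, and $B1$ is an instance of none of {\bf (B1)}--{\bf (B4)} nor of any FL$_{ew}$ axiom: in FL$_{ew}^B$ it is a theorem only \emph{via rule} {\bf (B)}, which is precisely what has no counterpart on the FL$_{ew}$ side. Concretely, evaluate in the two-element Boolean algebra sending every fresh variable $``B\psi"$ to $0$ (and the variables of $Var$ arbitrarily): every member of $T^*$ takes value $1$ (translated FL$_{ew}$-axiom instances are again axiom instances, hence valid; the translations of {\bf (B1)}, {\bf (B3)}, {\bf (B4)} all have a fresh variable as antecedent; the translation of {\bf (B2)} has the form $0\vee\neg 0$), and every member of $Cg^*$ takes value $0\leftrightarrow 0=1$, while $``B1"$ takes value $0$. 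By soundness of FL$_{ew}$, $Cg^*\cup T^*\not\vdash_{FL_{ew}} ``B1"$; moreover the same valuation shows that \emph{no} fresh variable is derivable, so rerouting through some other theorem in place of $1$ cannot repair the step. In fact this refutes the $\Rightarrow$ half of the lemma as literally stated (take $\Gamma=\emptyset$ and $\varphi=B1$: then $\vdash_{FL_{ew}^B} B1$ but $Cg^*\cup T^*\not\vdash_{FL_{ew}} ``B1"$), so the defect you inherited is in the statement itself, not merely in your argument. The fix is to enlarge $T$, e.g.\ taking $T$ to be the set of all \emph{theorems} of FL$_{ew}^B$, or closing the set of axiom instances under rule {\bf (B)} (adding $B1$ alone already suffices); with that amendment $``B1"\in T^*$, your route via $``B\chi_k"\leftrightarrow ``B1"\in Cg^*$ delivers $``B\chi_k"$, and the rest of your induction goes through verbatim, while the $\Leftarrow$ direction (the one needed for the FMP) is unaffected.
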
 

\noindent The proof is quite straightforward and analogous to those of similar results 
that can be found in the literature in slightly different contexts. 

\begin{teo} {\rm FL$^B_{ew}$} enjoys the FMP, that is, if $\Gamma \not\vdash_{FL_{ew}^B} \varphi$, 
then there is a finite ${\bf A} \in \mathbb{RL}^B$ and an ${\bf A}$-evaluation $e$ such that 
$e(\Gamma) = 1$ and $e(\varphi) < 1$. 
\end{teo}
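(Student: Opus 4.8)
The plan is to inherit the FMP from FL$_{ew}$ by means of the syntactic reduction of Lemma \ref{reduce}, converting the finite FL$_{ew}$-countermodel it produces into a finite $\mathbb{RL}^B$-algebra via Proposition \ref{finB}. First I would assume $\Gamma \not\vdash_{FL_{ew}^B} \varphi$ and apply Lemma \ref{reduce} to obtain $\Gamma^* \cup Cg^* \cup T^* \not\vdash_{FL_{ew}} \varphi^*$, a non-derivability in plain FL$_{ew}$ over the enlarged variable set $Var^*$. Since FL$_{ew}$ has the FMP (\cite{Ok}), there is a finite residuated lattice $\bf C$ and an FL$_{ew}$-evaluation $v \colon Fm($FL$_{ew}, Var^*) \to C$ with $v[\Gamma^* \cup Cg^* \cup T^*] \subseteq \{1\}$ and $v(\varphi^*) < 1$.

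Next, because $\bf C$ is finite, Proposition \ref{finB} supplies the greatest-Boolean operation $B_{\bf C}$, so that $\bf C$ becomes a finite $\mathbb{RL}^B$-algebra. I would then define the $\mathbb{RL}^B$-evaluation $e$ by $e(p) = v(p)$ on $Var$ and extend it homomorphically, so that in particular $e(B\psi) = B_{\bf C}(e(\psi))$. The aim is to prove, by induction on the complexity of $\psi$, that $e(\psi) = v(\psi^*)$ for every subformula $\psi$ occurring in $\Gamma \cup \{\varphi\}$. The propositional cases are routine, since the translation commutes with $\land, \lor, \conj, \to$; once the identity is established it follows at once that $e[\Gamma] \subseteq \{1\}$ and $e(\varphi) < 1$, i.e.\ that $\bf C$ with $e$ is the desired finite $\mathbb{RL}^B$-countermodel.

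The main obstacle is the inductive step for $B$, namely the equality $B_{\bf C}(v(\psi^*)) = v(``B\psi")$. One inequality is easy: axioms {\bf{(B1)}} and {\bf{(B2)}} force $v(``B\psi")$ to be a Boolean element below $v(\psi^*)$, whence $v(``B\psi") \leq B_{\bf C}(v(\psi^*))$, because $B_{\bf C}$ returns the \emph{greatest} such element. The reverse inequality is the delicate point: every $\mathbb{RL}^B$-algebra is compelled to read $B$ as the greatest Boolean below, so I must exclude the possibility that the fresh variable $``B\psi"$ is interpreted by a strictly smaller Boolean element of $\bf C$ (nothing in {\bf{(B1)}}, {\bf{(B2)}}, {\bf{(B4)}} alone prevents this). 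This is exactly where axiom {\bf{(B3)}} — the logical counterpart of {\bf{(BI3)}} — together with the congruence clauses gathered in $Cg^*$ has to be exploited, mimicking the derivation of the quasi-equation {\bf{(BI)}} in Theorem \ref{ec}, to pin $v(``B\psi")$ to its canonical value simultaneously for all relevant subformulas, while verifying that re-reading the fresh variables canonically preserves the truth of every premise in $\Gamma^*$ (the danger being the occurrences of $B$ in negative positions). Securing this coherence is the heart of the argument; the surrounding bookkeeping is routine.
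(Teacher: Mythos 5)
Your opening moves coincide with the paper's: Lemma \ref{reduce} plus the FMP of FL$_{ew}$ produce a finite ${\bf C} \in \mathbb{RL}$ and an evaluation $v$ with $v(\Gamma^* \cup Cg^* \cup T^*) = 1$ and $v(\varphi^*) < 1$. The genuine gap is exactly at the step you single out as delicate: you keep $\bf C$ itself, equip it with its greatest-Boolean operator $B_{\bf C}$ (Proposition \ref{finB}), and hope to prove $v(``B\psi") = B_{\bf C}(v(\psi^*))$ from {\bf (B3)} and $Cg^*$. That equality is not merely hard to establish; it is false in general, because $T^*$ and $Cg^*$ constrain only elements in the image of $v$, while $B_{\bf C}$ is computed over \emph{all} Boolean elements of $\bf C$, including ones that are the value of no formula whatsoever. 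Concretely, take $\Gamma = \{\neg Bp\}$, $\varphi = 0$, and ${\bf C} = {\bf G}_3 \times {\bf G}_2$; let $v(p) = (\frac{1}{2},1)$, and put $v(``B\chi") = (1,1)$ if $v(\chi^*) = (1,1)$ and $v(``B\chi") = (0,0)$ otherwise. Every value of every formula then lies in the three-element chain $A = \{(0,0), (\frac{1}{2},1), (1,1)\}$, on which this assignment of the fresh variables is precisely the greatest-Boolean operator of the chain; hence $v$ is the $*$-image of a genuine $\mathbb{RL}^B$-evaluation and validates all of $\Gamma^* \cup Cg^* \cup T^*$, with $v(0^*) < 1$. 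So this $({\bf C},v)$ is a perfectly admissible output of the FMP of FL$_{ew}$. Yet $B_{\bf C}((\frac{1}{2},1)) = (0,1)$, a Boolean element of $\bf C$ invisible to $v$, whereas $v(``Bp") = (0,0)$. Your homomorphic $e$ then gives $e(Bp) = (0,1)$ and $e(\neg Bp) = (1,0) \neq 1$: the structure you build is not even a countermodel for $\Gamma$, and no exploitation of the axioms can repair this, since they all hold under $v$. (Your fallback of re-reading the fresh variables canonically fails on the same example, precisely because of the negative occurrence of $B$ in $\Gamma$ that you were worried about.)

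The missing idea is the paper's Claim 2: do not work in $\bf C$, but in the subalgebra $\bf A$ of $\bf C$ generated by (indeed, equal to) the set of values $\{v(\psi^*) : \psi \in Fm(\mathrm{FL}_{ew}^B, Var)\}$. This $\bf A$ is finite, so $B_{\bf A}$ exists by Proposition \ref{finB}, and---this is the crucial point---every Boolean element of $\bf A$ is itself the value $v(``B\chi")$ of some fresh variable. Consequently, for any Boolean $b \in A$ with $b \leq v(\psi^*)$ one can write $b = v(``B\chi")$ and transfer the derivation of Lemma \ref{LFLB}(ii) through $v$ (using $T^*$) to conclude $b \leq v(``B\psi")$; this establishes condition {\bf (BI)} for the evaluation $e$ taken inside $\bf A$, so that $e(B\psi) = B_{\bf A}(e(\psi)) = v(``B\psi")$ and your induction closes. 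In short, the greatest Boolean below must be computed relative to the algebra of formula-values, where the logic ``sees'' every Boolean element; in the ambient $\bf C$ it can be a strictly larger, invisible one. In the example above, $B_{\bf A}((\frac{1}{2},1)) = (0,0) = v(``Bp")$, exactly as required.
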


\begin{proof2} If  $\Gamma \not\vdash_{FL_{ew}^B} \varphi$, by Lemma \ref{reduce}, 
it holds that $\Gamma^* \cup Cg^* \cup  T^*  \not\vdash_{FL_{ew}} \varphi^*$, and by strong completeness and FMP of FL$_{ew}$, 
there is a finite algebra ${\bf C} \in \mathbb{RL}$ and {\bf C}-evaluation $v$ such that 
$v(\Gamma^* \cup Cg^* \cup  T^*) = 1$ and $v(\varphi^*) < 1$. 
Then, the result will follow from the following facts: \\

\noindent {\bf \em Claim 1}: 
$G = \{ v(``B\varphi \text{''}) \mid B\varphi \in Fm($FL$_{ew}^B, Var) \}$ is a set of Boolean elements of $\bf C$. ``You \emph{were} a little grave,''

\begin{proofclaim} It is enough to check that $v((B\varphi)^*) \lor \neg v((B\varphi)^*) = v((B\varphi)^* \lor \neg (B\varphi)^*) = v((B\varphi \lor \neg B\varphi)^*) = 1$, 
where the latter holds because $B\varphi \lor \neg B\varphi$ is the axiom {\bf{(B2)}} of  FL$_{ew}^B$.
\end{proofclaim}

\noindent {\bf \em Claim 2}: 
Let $\bf A$ be the $\mathbb{RL}$-algebra generated by the set 
$X = \{ v(\varphi) \mid \varphi \in Fm($FL$_{ew}, Var^*) \}$, which is finite since ${\bf A}$ is a subalgebra of ${\bf C}$. 
Then, $B$ exists in $\bf A$ and $B({\bf A}) = G$. 
Therefore, $\bf A$ is indeed an $\mathbb{RL}^B$-algebra. 

\begin{proofclaim}  That $\bf A$ is finite is obvious, and thus, by Proposition 5, $B$ exists. 
On the other hand, the elements of $G$ keep being Boolean in $\bf A$. 
Hence, the only missing thing to check is that any Boolean element of $\bf A$ already belongs to $G$. 
This is also clear since Boolean elements are closed by propositional combinations with connectives. 
\end{proofclaim}

\noindent  {\bf \em Claim 3}: Let us define the $\bf A$-evaluation (taking $\bf A$ as $ \mathbb{RL}^B$-algebra) 
$e: Var \to A$ defined by $e(p) = v(p)$. 
Then, for any $\varphi$,  $e(\varphi) = v(\varphi^*)$, in particular, $e(B\varphi) = v(``B \varphi")$. 

\begin{proofclaim} We prove that $e(\varphi) = v(\varphi^*)$ by structural induction. 

\begin{itemize}
\item[-] if $\varphi$ is a propositional variable, it holds by construction
\item[-] if $\varphi =  \psi \odot \chi$ for $\odot \in \{\land, \lor, \&, \to \}$,  
by induction hypothesis we have $e(\psi) = v(\psi^*)$ and $e(\chi) = v(\chi^*)$, and hence  
$e(\varphi) = e(\psi \odot \psi) = e(\psi) \odot e(\chi) = v(\psi^*) \odot v(\chi^*) = v(\psi^* \odot \chi^*) =  v((\psi \odot \chi)^*)  = v(\varphi^*)$. 

\item[-] If $\varphi =  B\psi$, then we have to prove that $v(``B \psi") =  B(e(\psi))$, the latter being equal to $ e(B\psi)$ by definition. 
Therefore, we have to prove in turn that the three defining conditions {\bf{(BE1)}}, {\bf{(BE2)}}, and {\bf{(BI)}} are satisfied by 
$v(``B \psi") = v((B\psi)^*)$ to be the greatest Boolean below $e(\psi)$, assuming by induction that $v(\psi^*) = e(\psi)$. 
\begin{itemize}
\item[{\bf{(BE1)}}] Since $B\psi \to \psi$ is axiom {\bf{(BE1)}} of $FL_{ew}^B$, 
we have that $1 = v((B\psi \to \psi)^*) = v((B\psi)^*) \to v(\psi^*) =  v((B\psi)^*) \to e(\psi)$. Hence, $v((B\psi)^*) \leq e(\psi)$.  

\item[{\bf{(BE2)}}] is clear from Claim 1. 

\item [{\bf{(BI)}}] We have to check that if $b \in B(A)$ is such that $b \leq e(\psi) = v(\psi^*)$, then $b \leq v((B\psi)^*)$.  
If $b \in B(A)$, by construction of $\bf A$, then there exists a formula $\chi$ such that $b = v((B\chi)^*)$. 
On the other hand, by (ii) of Lemma 12, we know that  $B\chi \to \psi, B\chi \lor \neg B\chi \vdash B\chi \to B\psi$. 
Thus, we also know that if $v((B\chi)^*) \leq v(\psi)^*$ and $v((B\chi)^*) \lor \neg v((B\chi)^*) = 1$, then $v((B\chi)^* \leq v((B\psi)^*)$. 
Now, the two conditions are satisfied, hence we have $b = v((B\chi)^* \leq v((B\psi)^*)$. 
\end{itemize}
\end{itemize}
This closes the proof of Claim 3. 
\end{proofclaim}
Finally, from these claims it readily follows that $e(\Gamma) = v(\Gamma^*) = 1$ and $e(\varphi) = v(\varphi^*) < 1$, as required. 
\end{proof2}

\section{Conclusions and dedication}

In this paper we have considered the expansion of FL$_{ew}$ with the operator $B$, 
that in algebraic terms provides the greatest Boolean below a given element of a residuated lattice. 
Among other things, we have axiomatized it and shown that the resulting logic is a conservative expansion enjoying the Finite Model Property. 
The axioms for $B$ turn out to be very close to those of the Monteiro-Baaz $\Delta$ operator, 
in fact only one axiom is a weaker version of the one for $\Delta$. 
Even if the properties are very similar, that small difference causes, e.g. that in the context of MTL, 
the expansion with $B$ is not any longer a semilinear logic, in contrast to the expansion with $\Delta$. 

As a matter of fact, we have chosen this topic for our humble contribution to honour the memory of our beloved and late friend Franco Montagna, 
because it was suggested by Franco to the first author during the preparation of their joint manuscript \cite{AEM},  together with Amidei,  
where they study the expansion of FL$_{ew}$ and other substructural logics with $\Delta$.    

\section*{Acknowledgments and Compliance with Ethical Standards} The authors are thankful to an anonymous reviewer for his/her comments that have helped to improve the final layout of this paper. The authors have been funded by the EU H2020-MSCA-RISE-2015 project 689176--SYSMICS. 
Esteva and Godo  have been also funded by the FEDER/MINECO Spanish project TIN2015-71799-C2-1-P. 
The authors declare that they have no conflict of interest. 
This article does not contain any studies with human participants or animals performed by any of the authors. 
 
\bibliographystyle{plain}

\end{document}